\title{Global solutions and stability properties of the 5th order Gardner equation}
\author{Miguel A. Alejo}
\address{Departamento de Matem\'atica, Universidade Federal de Santa Catarina, Brasil}
\thanks{M. A. was partially funded by Product. CNPq grant no. 305205/2016-1  and VI PPIT-US program ref. I3C}
\author{Chulkwang Kwak}
\address{Facultad de Matem\'aticas, Pontificia Universidad Cat\'olica de Chile, Campus San Joaqu\'in. Avda. Vicu\~na Mackenna 4860, Santiago, Chile and Institute of Pure and Applied Mathematics, Chonbuk National University}
\thanks{C. K. is supported by FONDECYT Postdoctorado 2017 Proyect No. 3170067.}
\email{miguel.alejo@ufsc.br}
\email{chkwak@mat.uc.cl}
\date{\today}
\subjclass[2010]{Primary 37K15, 35Q53; Secondary 35Q51, 37K10}
\keywords{Higher order Gardner equation, Global well-posedness, Breather, stability, integrability}
\thanks{}
\chardef\bslash=`\\ 
\newtheorem{thm}{Theorem}[section]
\newtheorem{cor}[thm]{Corollary}
\newtheorem{lem}[thm]{Lemma}
\newtheorem{prop}[thm]{Proposition}
\newtheorem{defn}[thm]{Definition}
\theoremstyle{remark}
\newtheorem{rem}{Remark}[section]
\numberwithin{equation}{section}
\newcommand{\R}{\mathbb{R}}
\newcommand{\Z}{\mathbb{Z}}
\newcommand{\T}{\mathbb{T}}
\newcommand{\la}{\lambda}
\newcommand{\al}{\alpha}
\newcommand{\bt}{\beta}
\newcommand{\ga}{\gamma}
\newcommand{\si}{\sigma}
\newcommand{\spawn}{\operatorname{span}}
\newcommand{\supp}{\operatorname{supp}}
\newcommand{\wt}{\widetilde}
\newcommand{\ft}{\mathcal{F}}
\newcommand{\be}{\begin{equation}}
\newcommand{\ee}{\end{equation}}
\newcommand{\bp}{\begin{proof}}
\newcommand{\ep}{\end{proof}}
\newcommand{\bel}{\begin{equation}\label}
\newcommand{\eeq}{\end{equation}}
\newcommand{\bea}{\begin{eqnarray}}
\newcommand{\eea}{\end{eqnarray}}
\newcommand{\bee}{\begin{eqnarray*}}
\newcommand{\eee}{\end{eqnarray*}}
\newcommand{\ben}{\begin{enumerate}}
\newcommand{\een}{\end{enumerate}}
\newcommand{\nonu}{\nonumber}
\newcommand{\ms}{\medskip}
\def\normo#1{\left\|#1\right\|}
\def\normb#1{\Big\|#1\Big\|}
\def\norm#1{\|#1\|}
\def\set#1{\{#1\}}
\def\bra#1{\langle#1\rangle}
\newcommand{\px}{\partial_x}
\newcommand{\pt}{\partial_t}
 \providecommand{\norm}[1]{\lVert#1 \rVert}
\newcommand{\eval}[2][\right]{\relax
  \ifx#1\right\relax \left.\fi#2#1\rvert}
\let\norm=\enVert
\begin{document}
\begin{abstract}
In this work, we deal with the initial value problem of the 5th-order Gardner equation in $\R$, presenting the local  well-posedness result in $H^2(\R)$. As a consequence of the local result, in addition to $H^2$-energy conservation law, we are able to prove the global well-posedness result in $H^2(\R)$. Finally, we  present a stability result for 5th order Gardner  breather solution in the Sobolev space $H^2(\R)$.
\end{abstract}
\maketitle \markboth{ IVP for the 5th order Gardner equation}{Miguel A. Alejo and Chulkwang Kwak}
\renewcommand{\sectionmark}[1]{}

\section{Introduction}\label{0}
In this work, we are concerned with the \emph{focusing} 5th order Gardner equation

\be\begin{aligned}\label{5G}
& u_t + u_{5x}+10 \mu ^2 u_{3x}  +20 \mu  u u_{3x}+10 u^2 u_{3x}+120 \mu^3 u u_x+180 \mu ^2 u^2 u_x\\
&+120 \mu  u^3 u_x+10 u_x^3+40 \mu  u_xu_{xx}+40uu_xu_{xx} +30 u^4u_x=0,~~\mu\in\R^+.\\
\end{aligned}\ee
\noindent
This higher order Gardner equation can be obtained from the corresponding 5th order \emph{focusing} modified Korteweg-de Vries equation (shortly, 5th mKdV)
\be\begin{aligned}\label{5mkdv}
& v_{t} +(v_{4x} +10vv_x^2 +10v^2v_{xx} + 6v^5)_x=0,
\end{aligned}\ee
when one considers mKdV solutions of the form $v(t,x) = \mu + u(t,x),$ with $\mu\in\R^+$ and a suitable spatial translation\footnote{Such a spatial translation is performed in order to provide a simpler expression of the \emph{N-soliton solution} in Section \ref{sect5}. On the other hand, it is known that not only the first order linear term but also the third order term of the linear part in \eqref{5G} are negligible in the study of the well-posedness theory compared to the fifth order term.}.

\medskip

The 5th order Gardner equation  \eqref{5G}, as well as the 5th mKdV equation, is a well-known \emph{completely integrable} model \cite{Ga,AC,La}, with infinitely many conservation laws and well-known (long-time) asymptotic behavior of its solutions obtained with the help of the inverse scattering transform \cite{GrSl}. As a physical model, the 5th Gardner \eqref{5G} and the 5th mKdV \eqref{5mkdv} equations describe large-amplitude internal solitary waves, showing a dynamics which can look rather different from the KdV form. On the other hand, solutions of \eqref{5G} are invariant under space and time translations. Indeed, for any $t_0, x_0\in \R$, $u(t-t_0, x-x_0)$ is also a solution of both equations. Beside that, the scaling invariance is not respected by \eqref{5G}.

\medskip

As seen in \eqref{5G}, the 5th order Gardner equation \eqref{5G} contains mixed nonlinearities of 5th KdV equation
\be\begin{aligned}\label{5kdv}
& v_{t} +(v_{4x} + 5v_x^2 +10vv_{xx} + 10v^3)_x=0,
\end{aligned}\ee
\noindent
and 5th mKdV \eqref{5mkdv}, and hence the well-posedness theory of \eqref{5G} is highly relevant to the well-posedness of both equations. Ponce \cite{Ponce1993}, first, showed the local well-posedness of 5th KdV in $H^s(\mathbb R)$, $s \ge 4$ via the energy method in addition to the dispersive smoothing effect and a parabolic approximation method. Later, this local result has been improved by Kwon \cite{Kwon2008}, precisely, the local well-posedness in $H^s(\mathbb R)$, $s > \frac52$. Thereafter, Guo, Kwon and the second author \cite{GKK2013} and Kenig and Pilod \cite{KP2015}, independently, proved the local well-posedness in $H^s(\mathbb R)$, $s \ge 2$. Both works were based on the short time Fourier restriction norm method \cite{IKT2008}, while an additional weight and the (frequency localized) modified energy were used to prove the crucial energy estimates, respectively. Thanks to the $H^2$-level energy conservation law, the local result extended to the global one. 

\medskip

On the other hand, the 5th mKdV \eqref{5mkdv} has been studied by Linares \cite{Lin}. Linares proved the local well-posedness in $H^2(\mathbb R)$ via the contraction mapping principle in addition to the dispersive smoothing effect \cite{KPV1991.0, KPV1991}. Later, Kwon \cite{Kwon} improved the local result in $H^s(\mathbb R)$, $s > \frac34$, by using the standard Fourier restriction norm method \cite{Bourgain1993} in addition to Tao's $[k,Z]$-multiplier norm method \cite{Tao2001}. 

\medskip

We also refer to \cite{KPV1991-0, KPV1994, Pilod2008, Gru, Kato2012} for the local well-posedness of for higher order KdV and mKdV equations.

\medskip

It is known that the Initial value problem (IVP) of 5th KdV \eqref{5kdv} is a quasilinear problem in the sense that the solution map is (not uniformly) continuous, while the Cauchy problem of 5th mKdV is a semilinear problem in the sense that the flow map is Lipschitz continuous (via the Picard iteration method, and hence, is analytic). Thus, one may expect that the IVP of the 5th Gardner equation \eqref{5G} is also a quasilinear problem due to the strong (high-low) quadratic nonlinearity. Moreover, one expects to obtain the local well-posedness in $H^2(\mathbb R)$ (and hence the global well-posedness in $H^2(\mathbb R)$) from \cite{GKK2013, KP2015}. However, to prove the local well-posedness of the 5th Gardner equation is definitely non-trivial, thus one of aims in this work is to indeed prove the local well-posedness.

\medskip

As related problems, we also refer to \cite{Bourgain1995, Kwak2016, Kwak2018, KP2016, Tsugawa2017} for the well-posedness of 5th KdV, 5th mKdV and higher order equations in KdV hierarchy under the periodic boundary condition. 


\medskip

Concerning explicit solutions of  higher order mKdV and Gardner models,  Matsuno \cite{Mat1} proved the existence and built explicitly the N-soliton solution of the focusing mKdV hierarchy of equations by using inverse scattering technics and the bilinear Hirota decomposition. Recently, Gomes et al \cite{Gomes} dealt with the defocusing mKdV with NVBC and the associated defocusing Gardner hierarchy, showing multisolitonic structures. Unfortunately, many of the solutions they obtained are singular solutions (up to the kink which is in $L^\infty$). 


\medskip

The 5th order Gardner equation \eqref{5G}, as a completely integrable system, has an infinite set of conserved quantities.
Indeed  some of the (first) standard conservation laws of the \eqref{5G} are the \emph{mass}
\begin{eqnarray}\label{M1cor} M[u](t)  :=  \frac 12 \int_\R u^2(t,x)dx =
M[u](0), \end{eqnarray} the \emph{energy}

\be\label{E1}
E_\mu[u](t)  :=  \int_\R \left(\frac 12 u_x^2 -2\mu u^3  - \frac 12 u^4\right)(t,x)dx = E[u](0),
\ee

\ms \noindent and the \emph{higher order energy}, defined
respectively in $H^2(\R)$ 
\be\label{E5} E_{5\mu}[u](t)  :=   
\int_\R \left(\frac 12 u_{xx}^2 -10\mu uu_x^2 + 10 \mu^2u^4 - 5u^2u_x^2 + 6\mu u^5 + u^6\right)(t,x)dx= E_5[u](0).
\ee

\subsection{Main results}
We are interested in the regularity properties of the 5th order Gardner equation \eqref{5G} and long time behavior of $H^2$ global solutions to \eqref{5G}. 

\subsubsection{Well-posedness theory}
In comparison with the 5th mKdV \eqref{5mkdv}, the nonlinearity of \eqref{5G} consists of more terms which break the balance with the  5th order linear dispersive part of \eqref{5G}. Precisely, additional quadratic terms with three derivatives, pose technical problems, for instance, the failure of bilinear $X^{s,b}$ estimates, see Remark \ref{rem:failure} below (also see Remark 2.3 in \cite{GKK2013}). However, an analogous argument used in \cite{GKK2013, KP2015} enables us to attack the initial value problem of \eqref{5G} in $H^2$.

\medskip

The notion of the well-posedness, which is taken into account in this paper, is as follows:
\begin{defn}[Well-posedness]
We say that the 5th Gardner equation \eqref{5G} is local-in-time (or locally) well-posed in $H^s(\R)$, if for any $R > 0$ and any $u_0 \in \set{f \in H^s(R) : \norm{f}_{H^s} \le R}$, there exist a local time $T=T(R) > 0$ and a unique solution $u$ to \eqref{5G} in $C([0,T];H^s(\R)) \cap X_T$, for some auxiliary space $X_T$. Moreover, the solution map $u_0 \mapsto u(t)$ is continuous from $\set{f \in H^s(\R) : \norm{f}_{H^s} \le R}$ to $C([0,T];H^s(\R))$. The local result is extended to the global one, if $T >0$ is independent of $R$. 
\end{defn}

\medskip

We are, first, going to show that  the 5th order Gardner equation \eqref{5G} is locally well-posed in $H^2$ via the classical energy method in addition to the short time Fourier restriction norm method. We state the local well-posedness result as follows:
\begin{thm}\label{LWP1}
The 5th order Gardner equation \eqref{5G} is locally well-posed in $H^s(\R)$, $s \ge 2$. 
\end{thm}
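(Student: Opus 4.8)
The strategy is to run a short-time Fourier restriction norm argument in the spirit of \cite{GKK2013, KP2015}, tailored to the quadratic high-low terms $20\mu uu_{3x}$ and the cubic term $10u^2u_{3x}$ which carry three derivatives and destroy standard $X^{s,b}$ estimates (cf. Remark \ref{rem:failure}). First I would set up the function spaces: decompose frequency space dyadically, define short-time $X^{s,b}$-type pieces $F^s$ at time scale $T\sim N^{-2}$ on each dyadic block $N$, together with the companion space $N^s$ for the nonlinearity and an energy space $E^s$ encoding $\sup_{t}\|u(t)\|_{H^s}$. Since scaling is broken, I would absorb $\mu$ and the low-order linear terms $10\mu^2 u_{3x}$ into the nonlinearity (they are harmless, being of lower order relative to $\partial_x^5$) and treat the genuinely dispersive linear part $u_t+u_{5x}=0$ as the reference flow.

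The argument then has three pillars. \textbf{(i) Linear and short-time bilinear/trilinear estimates:} prove $\|u\|_{F^s}\lesssim \|u\|_{E^s}+\|\mathcal{N}(u)\|_{N^s}$ for the reference equation, and then estimate every nonlinear term $\mathcal{N}(u)$ of \eqref{5G} in $N^s$ by $\|u\|_{F^s}^{k}$ ($k=2,3,4,5$). The quadratic terms are the crux: for $20\mu uu_{3x}+10u^2u_{3x}$ and $10 u_x^3+40\mu u_xu_{xx}+40uu_xu_{xx}$ one must exploit that in the high-high-to-low and high-low-to-high interactions the modulation gains enough powers of the high frequency to beat the three (or more) derivatives — exactly where the short-time localization $T\sim N^{-2}$ is essential and where the plain Bourgain-space estimate fails. \textbf{(ii) Energy estimate:} differentiate $\|u(t)\|_{H^s}^2$ in time, integrate by parts to cancel the worst contributions, and bound the remainder using a frequency-localized modified energy (as in \cite{KP2015}) or an added weight (as in \cite{GKK2013}) to control the residual high-low quadratic interaction that cannot be integrated away directly; this yields $\|u\|_{E^s}^2 \lesssim \|u_0\|_{H^s}^2 + T^{\theta}\|u\|_{F^s}^{2}(1+\|u\|_{F^s}^{3})$ for some $\theta>0$. \textbf{(iii) Closing the estimates:} combine (i) and (ii) into $\|u\|_{F^s}\lesssim \|u_0\|_{H^s}+ (\text{polynomial in }\|u\|_{F^s})$ with a small factor $T^{\theta}$ on the higher-order terms, and run a continuity/bootstrap argument to obtain a priori bounds on $[0,T]$ with $T=T(R)$. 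Uniqueness and continuous dependence follow from the same multilinear estimates applied to the difference of two solutions (at the $H^s$, or if needed $H^{s-1}$, level for the difference with $L^\infty_t H^s$ persistence), after constructing solutions by a standard parabolic/artificial-viscosity regularization and passing to the limit, since the solution map is not expected to be smooth.

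\textbf{Main obstacle.} The hard part is the energy estimate together with the short-time quadratic estimates for the three-derivative terms $20\mu uu_{3x}+10u^2u_{3x}$: a direct $X^{s,b}$ bound fails, so one must run the full frequency-localized modified-energy machinery, carefully tracking how the resonance function $\xi_1^5+\xi_2^5-(\xi_1+\xi_2)^5$ behaves in the high-low regime and choosing the correction terms so the net loss of derivatives is nonpositive at $s=2$. The cubic, quartic and quintic terms are comparatively benign — they are subcritical and handled by straightforward multilinear estimates once the function-space framework is in place — but verifying that the correction terms in the modified energy are themselves controlled in $E^s$ (so the modified and original norms are equivalent) is where most of the technical work lies.
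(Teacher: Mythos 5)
Your overall architecture (dyadic short-time spaces $F^s$, $N^s$, $E^s$ at time scale $2^{-2k}$, nonlinear estimates, a weighted/modified energy estimate, and a bootstrap) is exactly the paper's approach, but there are two points where your plan as written would not close.

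First, you rely on extracting a small factor $T^{\theta}$ from the nonlinear and energy estimates to run the continuity argument for data of arbitrary size. In the short-time Fourier restriction framework this gain is not available: the time localization is at the frequency-dependent scale $2^{-2k}\ll T$, so the estimates one actually obtains are of the form \eqref{eq:brief proof}, with no positive power of $T$ on the right-hand side. Consequently the bootstrap only closes for \emph{small} data. The paper resolves this not by a $T^{\theta}$ gain but by the scaling equivalence $u_{\lambda}(t,x)=\lambda u(\lambda^5 t,\lambda x)$, which maps \eqref{5G} to \eqref{5G_Gen} and, since the scaling exponent is $s_c=-\tfrac12$, shrinks the data to satisfy $\norm{u_0}_{H^s}\le\epsilon$ (see Section \ref{sec:LWP}). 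You explicitly decline to use scaling ("since scaling is broken"), so you are left with neither smallness nor a $T^{\theta}$ factor, and step (iii) of your plan fails. The fix is to observe that although the scaling \emph{invariance} is broken, the scaling \emph{equivalence} with the $\lambda$-dependent family \eqref{5G_Gen} survives and suffices.

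Second, your treatment of uniqueness and continuous dependence is underspecified at the point where the real difficulty sits. The energy estimate for the difference $v=u_1-u_2$ does not hold in $F^s$ alone because the symmetrization/integration-by-parts cancellations are lost; as in Proposition \ref{prop:energy diff}, one only gets a closed estimate for $\norm{v}_{E^0}$ and an estimate for $\norm{v}_{E^s}$ containing the term $(\norm{u_1}_{F^{2s}}+\norm{u_2}_{F^{2s}})\norm{v}_{F^0}\norm{v}_{F^s}$, which forces a Bona--Smith regularization of the data (so that the $F^{2s}$ norms are finite and controlled) rather than the parabolic/artificial-viscosity approximation you propose. Your hedge "at the $H^{s-1}$ level if needed" points in the right direction but does not identify that the correct pair of spaces is $F^0\cap F^{2s}$ and that the compactness argument is closed by the $3\epsilon$-argument with frequency-truncated data. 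The rest of your outline (the role of the resonance function in the high-low regime, the weight versus modified energy, and the benignity of the higher-order terms) matches the paper.
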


For the proof of Theorem \ref{LWP1}, we use the short-time Fourier restriction norm method in a frequency dependent time interval. This is introduced by Ionescu, Kenig and Tataru \cite{IKT2008} in the context of KP-I equation in the Besov-type space setting, see also \cite{KT2007, CCT2008} for similar ideas in the different settings. The short-time Fourier restriction norm method has been further developed in, for instance, \cite{Guo2011, GPWW2011, Guo2012, GKK2013, KP2015, Kwak2016, Kwak2018, GO2018}.

\medskip

The main difficulty arising in \eqref{5G} is the strong \emph{high-low} bilinear interaction component of the following type\footnote{Here $P$ is a appropriate truncation operator in the Fourier space, thus $P_{high}u$ means the high frequency ($\|\xi| \gg 1$) localized portion of $u$, while the frequency support of $P_{\leq 0}u$ is in $[-1,1]$.}
\begin{equation}\label{hl}
(P_{\leq 0}u)\cdot (P_{high}u_{xxx})
\end{equation}
newly generated from the map $v(t,x) = \mu + u(t,x)$. The standard bilinear $X^{s,b}$-estimates\footnote{The $X^{s,b}$ spaces are equipped with the norm
\[\norm{f}_{X^{s,b}} = \norm{\bra{\xi}^s\bra{\tau-\xi^5}^b \wt{f}}_{L_{\tau,\xi}^2},\]
where $\wt{f}$ is the space time Fourier coefficient (also denoted by $\ft(f)$) and $\bra{\cdot} = (1+|\cdot|^2)^{\frac12}$. For more details, see Section \ref{sec:WP}.} ($ \|uu_{xxx} \|_{X^{s,b-1}} \lesssim \|u\|_{X^{s,b}}^2$) fails in usual $X^{s,b}$ spaces for any $s \in \R$ (see Remark \ref{rem:failure} below), where the $X^{s,b}$ norm is defined in \eqref{Xsb}, since the dispersive smoothing effect in a coherent case occurring in \eqref{hl} is not enough to control the three derivative in the high frequency mode. The following remark provides a counter-example to show the failure of the standard bilinear estimate:
\begin{rem}[Remark 2.3 in \cite{GKK2013}]\label{rem:failure}
Similarly as the 5th KdV case, also as mentioned before, the standard $X^{s,b}$ bilinear estimate fails to hold:
\begin{equation}\label{eq:counterexample}
\norm{u\px^3v}_{X^{s,b-1}} \leq C\norm{u}_{X^{s,b}}\norm{v}_{X^{s,b}},
\end{equation}
due to the following \emph{high-low} interactions causing the coherence, for instance, 
\[u(t,x) = \ft^{-1}[\mathbf{1}_\Omega(\tau, \xi)](t,x) \quad \mbox{and} \quad v(t,x) = \ft^{-1}[\mathbf{1}_\Sigma(\tau, \xi)],\]
where space-time frequency sets $\Omega$ and $\Sigma$ are given by\footnote{It suffices to regard only $\px^5$ as a linear part of \eqref{5G}, since $\px^3$ is negligible in a sense of the dispersion effect.}
\[\Omega=\set{(\tau, \xi) \in \R^2 : |\tau - \xi^5| \le 1, N \le |\xi| \le N + 1} \quad \mbox{and} \quad \Sigma=\set{(\tau, \xi) \in \R^2 : |\tau - \xi^5| \le 1, |\xi| \le 1},\]
for fixed large frequency $N \gg 1$. Indeed, a direct calculation gives  $\text{LHS of } \eqref{eq:counterexample} = NN^s$, while $\text{RHS of } \eqref{eq:counterexample} = N^s$.
\end{rem}
However, using $X^{s,b}$ structure in a short time interval $(\approx \text{(frequency)}^{-2})$, one reduces the contribution of high frequency with low modulation, so that one handles \emph{high-low} interaction component \eqref{hl} (see Remark \ref{rem:recover} below and Proposition \ref{prop:bilinear}).

\begin{rem}[Remark 2.3 in \cite{GKK2013}]\label{rem:recover}
The short time $X^{s,b}$ spaces ($F^s$ and $N^s$ to be introduced in Section \ref{sec:sol space}) in the interval of the length $(\approx \text{(frequency)}^{-2})$ resolves the low-high interaction counter-example presented in Remark \ref{rem:failure}. The corresponding sets in this setting are given by
\[\wt{\Omega}=\set{(\tau, \xi) \in \R^2 : |\tau - \xi^5| \le N^2, N \le |\xi| \le N + N^{-2}} \quad  \mbox{and} \quad \wt{\Sigma}=\set{(\tau, \xi) \in \R^2 : |\tau - \xi^5| \le 1, |\xi| \le 1},\]
and define $u$ and $v$ similarly as in Remark \ref{rem:failure}, but with respect to $\wt{\Omega}$ and $\wt{\Sigma}$, respectively. Then, one immediately obtains for any $s \in \R$ that
\[\norm{u\px^3v}_{N^s} \sim N^sN^3N^{-1}N^{-2}N \sim N^sN \quad \mbox{and} \quad \norm{u}_{F^s}\norm{v}_{F^s} \sim N^sN.\]
\end{rem}
A price to pay for the profit of the short-time argument is an energy-type estimate. However, the strong high-low interactions, where the low frequency component has the largest modulation, cause a trouble in the energy estimates when following Ionescu-Kenig-Tataru's method. A way to treat this interaction is to use a weight, which was suggested in \cite{IK2007} to handle the same interaction for the Benjamin-Ono equation (see also \cite{GPWW2011}). Note that the modified energy, initially introduced in \cite{Kwon2008} and further developed in \cite{KP2015, Kwak2016, Kwak2018, MPV2018-1, MPV2018-2}, plays a similar role as an additional weight. See \cite{GKK2013} and \cite{KP2015} for a comparison.   

\medskip

Note moreover that a scaling equivalence enables us to focus on small solutions to \eqref{5G_Gen} instead of \eqref{5G} (see Section \ref{sec:WP}). To close the energy method argument for \eqref{5G_Gen}, we gather linear, nonlinear and energy estimates, 
\begin{equation}\label{eq:brief proof}
\left \{
\begin{array}{l}
\norm{u}_{F^{s}(T')}\lesssim \norm{u}_{E^s(T')} + \norm{\mathcal N_2(u) + \mathcal N_3(u) + \mathcal{SN}(u)}_{N^s(T')},\\
\norm{\mathcal N_2(u) + \mathcal N_3(u) + \mathcal{SN}(u)}_{N^s(T')} \lesssim \sum_{j=2}^{5} \norm{u}_{F^s(T)}^j,\\
\norm{u}_{E^s(T')}^2 \lesssim \norm{u_0}_{H^s}^2 + \sum_{j=3}^{6} \norm{u}_{F^s(T)}^j.
\end{array}
\right.
\end{equation}
The continuity argument ensures \emph{a priori} bound of solutions to \eqref{5G_Gen}. Moreover, a similar estimate as in \eqref{eq:brief proof} for the difference of two solutions completes the limiting argument (compactness argument). We note that the energy estimate for the difference of two solutions does not hold true in $F^s$ spaces due to the lack of the symmetry, but hold in the intersection of the weaker ($F^0$) and the stronger ($F^{2s}$) spaces, thus the Bona-Smith argument is essential to close the compactness method.

\medskip


The global well-posedness follows immediately from the above local result and the conservation of the second order energy \eqref{E5}.
\begin{thm}\label{GWP1}
The 5th order Gardner equation \eqref{5G} is globally well-posed in the energy space $H^2(\R)$\footnote{The persistence of regularities ensures the global well-posedness in $H^s(\R)$, $s \ge 2$.}.
\end{thm}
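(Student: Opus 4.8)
The plan is to upgrade the local solution provided by Theorem~\ref{LWP1} to a global one via an \emph{a priori} bound on the $H^2$ norm, using the three conservation laws \eqref{M1cor}, \eqref{E1} and \eqref{E5}. By Theorem~\ref{LWP1} together with the standard continuation argument (the local time depends only on the size of the data in $H^2$), for $u_0 \in H^2(\R)$ there is a maximal existence time $T^* = T^*(u_0) \in (0,\infty]$ and a solution $u \in C([0,T^*);H^2(\R)) \cap X_{T}$ for every $T<T^*$, with the blow-up alternative: if $T^* < \infty$, then $\limsup_{t\uparrow T^*}\norm{u(t)}_{H^2} = \infty$. Hence it suffices to prove that $\norm{u(t)}_{H^2}$ stays bounded on $[0,T^*)$ by a constant depending only on $\mu$ and $\norm{u_0}_{H^2}$. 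Since the quantities in \eqref{M1cor}, \eqref{E1}, \eqref{E5} are a priori only formal at the $H^2$ level, I would first verify them for smooth solutions by differentiating in time and integrating by parts along \eqref{5G}, and then transfer them to arbitrary $H^2$ data by approximating $u_0$ in $H^2$ by smooth data and invoking the continuity of the flow map from Theorem~\ref{LWP1} (equivalently, the Bona--Smith-type approximation already used for the compactness step in the proof of Theorem~\ref{LWP1}).

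The heart of the matter is then a short chain of Gagliardo--Nirenberg/Sobolev interpolation bounds. Conservation of mass \eqref{M1cor} gives $\norm{u(t)}_{L^2} = \norm{u_0}_{L^2}$. Next, from \eqref{E1},
\[
\tfrac12\norm{u_x(t)}_{L^2}^2 = E_\mu[u_0] + \int_\R \Big(2\mu u^3 + \tfrac12 u^4\Big)(t,x)\,dx,
\]
and the one-dimensional inequalities $\norm{f}_{L^3}^3 \lesssim \norm{f}_{L^2}^{5/2}\norm{f_x}_{L^2}^{1/2}$ and $\norm{f}_{L^4}^4 \lesssim \norm{f}_{L^2}^{3}\norm{f_x}_{L^2}$, combined with Young's inequality, absorb the cubic and quartic terms into $\tfrac14\norm{u_x(t)}_{L^2}^2$; this yields $\norm{u_x(t)}_{L^2} \le C(\mu,\norm{u_0}_{H^2})$, and in particular, by $\norm{u(t)}_{L^\infty} \lesssim \norm{u(t)}_{L^2}^{1/2}\norm{u_x(t)}_{L^2}^{1/2}$, also $\norm{u(t)}_{L^\infty} \le C(\mu,\norm{u_0}_{H^2})$. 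Finally, from \eqref{E5},
\[
\tfrac12\norm{u_{xx}(t)}_{L^2}^2 = E_{5\mu}[u_0] + \int_\R\Big(10\mu u u_x^2 + 5u^2u_x^2 - 10\mu^2 u^4 - 6\mu u^5 - u^6\Big)(t,x)\,dx,
\]
and every term on the right-hand side is already controlled by $C(\mu,\norm{u_0}_{H^2})$, since $\abs{\int_\R u u_x^2}\le \norm{u}_{L^\infty}\norm{u_x}_{L^2}^2$, $\abs{\int_\R u^2 u_x^2}\le \norm{u}_{L^\infty}^2\norm{u_x}_{L^2}^2$, and $\abs{\int_\R u^k}\le \norm{u}_{L^\infty}^{k-2}\norm{u}_{L^2}^2$ for $k=4,5,6$. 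Hence $\norm{u_{xx}(t)}_{L^2} \le C(\mu,\norm{u_0}_{H^2})$ too, so $\norm{u(t)}_{H^2}$ is bounded on $[0,T^*)$ independently of time.

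This contradicts the blow-up alternative unless $T^* = \infty$, giving the global $H^2$ solution; the persistence of regularity in Theorem~\ref{LWP1} then upgrades this to global well-posedness in $H^s(\R)$ for all $s \ge 2$. I do not expect a serious obstacle here beyond the one bookkeeping point flagged above: the conserved quantity \eqref{E5} involves $\int u_{xx}^2$, which sits exactly at the $H^2$ regularity threshold, so one cannot naively differentiate $E_{5\mu}[u](t)$ along a mere $H^2$ solution and must instead run the smooth-approximation/limiting argument to legitimize the identity; once \eqref{M1cor}, \eqref{E1}, \eqref{E5} are in hand at $H^2$ regularity, the interpolation estimates above are entirely routine.
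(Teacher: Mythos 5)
Your proposal is correct and follows essentially the same route as the paper: the paper's (very terse) argument is precisely to combine the conservation laws \eqref{M1cor}, \eqref{E1}, \eqref{E5} with Sobolev/Gagliardo--Nirenberg interpolation to obtain a time-independent \emph{a priori} $H^2$ bound and then iterate the local theory. The only cosmetic difference is that the paper runs this for the rescaled small solution of \eqref{5G_Gen} (invoking the smallness condition and the rescaled energy \eqref{E5-1}), whereas your interpolation chain is carried out directly on \eqref{5G} and, as you correctly observe, does not actually need smallness since the quartic term in $E_\mu$ is only linear in $\norm{u_x}_{L^2}$.
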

\begin{rem}
It is well-known that local results can be extended to the global one in the energy space without the smallness assumption for defocusing equations (for simple models), while the smallness condition is necessary for the proof of the global well-posedness in the energy space for focusing equations (the large data global well-posedness for focusing equations has a different story). However, \eqref{5G} admits the scaling equivalence, which is slightly different from the standard scaling symmetry (or invariance), but still plays an almost same role in the local (or perturbation) theory. Thus, one has Theorem \ref{GWP1} from Theorem \ref{LWP1} in addition to the (rescaled) conservation law \eqref{E5}. See Section \ref{sec:WP}, in particular Section \ref{sec:LWP}, for more details.
\end{rem}

On the other hand, an observation explained in Remark \ref{rem:failure} above naturally poses an interesting question: Does the flow map from data to solutions fail to be (locally) uniformly continuous for all regularities? As an immediate answer to the question, we state the following (weak) ill-posedness result, which extends Cardoso and the first author's recent result \cite{AC2018} to all regularities:
\begin{thm}\label{Illposed}
The 5th order Gardner equation \eqref{5G} is weakly ill-posed in $H^s(\R)$, for $s > 0$ in the following sense: there exist $c, C > 0$, $0 < T \le 1$, and two sequences $u_n$ and $v_n$ of solutions to \eqref{5G} such that  
\[\sup_n\norm{u_n(t)}_{H^s} + \sup_n\norm{v_n(t)}_{H^s} \le C, \quad t \in [0,T]\]
and initially
\[\lim_{n \to \infty}\norm{u_n(0) - v_n(0)}_{H^s} = 0,\]
but for every $t \in [0,T]$
\[\liminf_{n \to \infty}\norm{u_n(t) - v_n(t)}_{H^s} \ge c|\sin t| \sim c|t|.\]
\end{thm}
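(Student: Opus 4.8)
\textbf{Proof proposal for Theorem \ref{Illposed}.}

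The plan is to exploit the exact breather (or, more simply, the exact soliton) solutions of \eqref{5G} together with the scaling equivalence mentioned after \eqref{5G_Gen}. The cleanest route mimics the classical Kenig--Ponce--Vega argument for mKdV--type equations: build a two-parameter family of explicit traveling (or breathing) solutions, and choose two sequences of parameters that make the initial data converge in $H^s$ while the solutions separate instantaneously because of a mismatch in phase/translation speed. Concretely, I would start from the one-soliton of \eqref{5G}, write it as $u_c(t,x)=Q_c(x-v(c)t)$ with amplitude parameter $c$, speed $v(c)$ a smooth, strictly monotone function of $c$, and $\|Q_c\|_{H^s}\sim c^{\theta(s)}$ for an explicit exponent $\theta(s)>0$ when $s>0$ (this positivity of $\theta$ is exactly what forces the restriction $s>0$, and matches the $|\sin t|\sim|t|$ growth since the separation comes from a spatial shift $\sim t\,\Delta v$).

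The key steps, in order: (1) Normalize by scaling so that one works with \eqref{5G_Gen} and a convenient parameter window; (2) fix a large $n$-dependent base amplitude $a_n\to\infty$ (or $\to 0$, whichever makes the $H^s$ norm bounded while the phase speeds diverge) and set $c_n = a_n$, $\tilde c_n = a_n + \delta_n$ with $\delta_n\to 0$ chosen so that $\|Q_{c_n}-Q_{\tilde c_n}\|_{H^s}\to 0$ but $|v(c_n)-v(\tilde c_n)|\cdot a_n \gtrsim 1$; (3) translate/modulate the two solutions to a common initial profile up to $o(1)$ in $H^s$, so that $\|u_n(0)-v_n(0)\|_{H^s}\to 0$; (4) for $t\in(0,T]$, expand $u_n(t)-v_n(t)$ and isolate the leading term, which behaves like $\big(\partial_x Q_{a_n}\big)\big(x-v(a_n)t\big)\cdot t\,(v(c_n)-v(\tilde c_n))$ plus lower-order terms, and compute its $H^s$ norm from below: the derivative gains one power of the frequency scale $a_n$, which cancels against the smallness of $\delta_n$ and produces the uniform lower bound $c|\sin t|\sim c|t|$ (the $\sin$ appearing naturally if one uses a breather, whose internal oscillation supplies the bounded-but-nonvanishing factor; for a pure soliton one gets $c|t|$ directly and then notes $|t|\sim|\sin t|$ on $[0,1]$); (5) verify the uniform upper bound $\sup_n\|u_n(t)\|_{H^s}+\sup_n\|v_n(t)\|_{H^s}\le C$, which is immediate from $\|Q_{c_n}\|_{H^s}\sim\|Q_{\tilde c_n}\|_{H^s}\le C$ and translation invariance of the $H^s$ norm.

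The main obstacle I anticipate is \emph{step (4)}: controlling the error between the true difference $u_n(t)-v_n(t)$ and its formal linearization $t\,\partial_c\big(Q_c(x-v(c)t)\big)\big|_{c=a_n}\cdot\delta_n$ uniformly in $n$, i.e. showing the remainder is $o\!\big(\|\text{leading term}\|_{H^s}\big)$ for $t$ in a fixed interval. Since both $u_n$ and $v_n$ are \emph{exact} solutions (not approximate ones) this is really a quantitative continuity/Taylor estimate for the explicit soliton family in the parameter $c$, uniformly on the relevant amplitude scale; one must track how many derivatives fall on $Q$ and confirm that the worst term is exactly the single-derivative one, with the quadratic-in-$\delta_n$ remainder genuinely lower order after accounting for the $a_n$-dependent frequency weights in $H^s$. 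A secondary technical point is handling the lower-order linear terms $u_{3x}$ and the first-order term in \eqref{5G}: by the footnote remark they are negligible at high frequency, so one should check they do not spoil the speed asymptotics $v(c)$, or simply absorb them by choosing the amplitude scaling so that the $\partial_x^5$ term dominates. If the soliton computation turns out unwieldy, an alternative is to run the same scheme directly with the 5th Gardner breather of Section \ref{sect5}, whose two scaling parameters give more room and whose oscillatory factor delivers the $|\sin t|$ dependence verbatim; either way the heart of the argument is the explicit lower bound in step (4).
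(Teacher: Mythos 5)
Your proposal has a genuine gap, and it is not the one you flag in step (4); it is step (2). The soliton/breather phase-decoherence scheme requires a family of exact solutions whose $H^s$ norm stays \emph{bounded} while the relevant propagation (or internal phase) speed diverges, so that an $o(1)$ change of parameter produces an $O(1)$ decoherence at fixed time. For the $5$th Gardner soliton \eqref{examplesolGs} one has $\|Q_{\mu,c}\|_{\dot H^s}\sim c^{s/2+1/4}$ for large $c$, and for the breather \eqref{5BreG} with large internal frequency $\al$ one has $\|B_{\al,\bt,\mu}\|_{H^s}\sim \al^{s}$ (up to $\bt$-dependent constants); both are bounded only for Sobolev indices at or below a nonpositive threshold. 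For $s>0$ the required uniform bound $\sup_n\|u_n(t)\|_{H^s}\le C$ forces the amplitude/frequency parameters into a compact set, on which $c\mapsto Q_{\mu,c}(\cdot-v_{\mu,c}t)$ is uniformly continuous; then $\delta_n\to0$ implies $t\,|v(c_n)-v(\tilde c_n)|\to0$ and the difference at time $t$ tends to zero rather than being bounded below. This is exactly why the companion result \cite{AC2018}, which does run your breather argument, covers only the complementary regularity range, and why Corollary \ref{cor:illposed} has to combine the two.

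The paper's proof of Theorem \ref{Illposed} therefore uses an entirely different mechanism, the quasilinear one of Koch--Tzvetkov and Kwon driven by the high-low interaction \eqref{hl}. One builds approximate solutions $u_{ap}^{\pm}=u_l^{\pm}+u_h^{\pm}$, where $u_l^{\pm}$ are genuine solutions issued from tiny low-frequency data of opposite signs ($\pm\epsilon N^{-3}\wt\phi_N$) and $u_h^{\pm}$ is a high-frequency wave packet \eqref{hiwave} whose carrier phase is shifted by $\mp t$ precisely because the low-frequency background acts as a transport coefficient in the term $u\,u_{xxx}$. The two initial data then differ by $O(N^{-(2-\delta)/2})\to0$ in $H^s$, while the phase mismatch yields $\|u_N^{+}(t)-u_N^{-}(t)\|_{H^s}\gtrsim|\sin t|$. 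The real work is showing $u_{ap}^{\pm}$ approximates the true solution: this is done with the residual estimates of Lemma \ref{lem:approx} and the local theory ($F^s$, $N^s$, $E^s$ estimates) for $s\ge2$, and the range $0<s<2$ is reached by interpolating against the $L^2$ and $H^2$ conservation laws. If you want to salvage an explicit-solution route for $s>0$ you would need a family whose $H^s$ norm is insensitive to a parameter that strongly changes the phase, which the soliton and breather families of \eqref{5G} do not provide.
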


Theorem \ref{Illposed} can be expected from the observation in the linear \emph{local smoothing effect} \cite{KPV1991.0, KPV1991}
\[\norm{\px^2e^{-t\px^5}u_0}_{L_x^{\infty}L_t^2} \lesssim \norm{u_0}_{L^2}\] 
compared to the three derivatives in the quadratic nonlinearity. In other words, the local smoothing effect, which recovers only two derivatives, is not enough to handle the nonlinear term $u\partial_x^3u$, as already seen in Remark \ref{rem:failure}. Such a strong high-low interaction phenomenon can be seen in other dispersive equations, for instance, the Benjamin-Ono equation (BO) and the Kadomtsev-Petviashvili I equation (KP-I). Early, constructing examples reflecting \eqref{hl}, the flow map has been shown to be not $C^2$ continuous \cite{MST2001, MST2002}, and uniformly continuous \cite{KT2005, KT2008}.

\medskip

To prove Theorem \ref{Illposed}, we take an argument introduced in \cite{KT2005} (but essentially follows from \cite{Kwon2008}) in order to construct the approximate solutions, which indeed reveals the ill-posedness phenomenon. Using the local well-posedness theory, one shows the approximate solutions are indeed "good" approximate solutions in $H^s$ sense, $s \ge 2$. Moreover, since the equation \eqref{5G} is completely integrable (thus it admits infinitely many conservation laws), we are able to show the same conclusion in the regularity range not only $s \ge 2$, but also $0 < s < 2$ by using $L^2$ and $H^2$ conservation laws.

\medskip

The strategy employed in \cite{AC2018} was to use Gardner breather solutions as a way to measure the regularity of the associated Cauchy problem in $H^s$. This allowed to find the sharp Sobolev index under which the local well-posedness of the problem is lost, meaning that the dependence of 5th order Gardner solutions upon initial data fails to be continuous. We refer to, for instant,  \cite{KPV2, CCT2003, Kwon} for analogous arguments. 

\medskip

Finally, together with the result in \cite{AC2018}, we get the following

\begin{cor}\label{cor:illposed}
The 5th order Gardner equation \eqref{5G} is (weakly) ill-posed in $H^s(\R)$, for $s \in \R$, in the sense of the statement given in Theorem \ref{Illposed}.
\end{cor}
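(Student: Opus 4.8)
\textbf{Proof proposal for Corollary \ref{cor:illposed}.}

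The plan is to simply combine Theorem \ref{Illposed}, which handles the range $s>0$, with the ill-posedness result of Cardoso and the first author \cite{AC2018}, which is stated in the complementary range $s<0$ (and, one expects, also provides the endpoint information relevant near $s=0$). First I would recall the precise statement from \cite{AC2018}: there one uses the family of 5th order Gardner breather solutions, whose $H^s$-norm behaves like a positive power of a parameter going to zero precisely when $s<0$, so that a suitable rescaling/modulation of the breather data produces two sequences of exact solutions $u_n, v_n$ which are uniformly bounded in $H^s$, converge in $H^s$ at $t=0$, yet separate at later times in exactly the same quantitative sense as in the statement of Theorem \ref{Illposed}. Thus for $s<0$ the conclusion is already available.

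The second step is to observe that Theorem \ref{Illposed} covers $s>0$ and that, for the two sequences constructed there, the separation estimate $\liminf_n \norm{u_n(t)-v_n(t)}_{H^s} \ge c|\sin t|$ holds uniformly on a fixed interval $[0,T]$, so no further work is needed in that range. Since the Gardner breather argument of \cite{AC2018} is designed to reach $s=0$ as a limiting case (the breather data remain bounded in $L^2$ while their higher Sobolev norms blow up), I would invoke it to cover $s=0$ as well, or alternatively note that the approximate-solution construction behind Theorem \ref{Illposed}, combined with the $L^2$ conservation law already used to push that theorem below $s=2$, handles $s=0$ directly. Putting the two ranges together yields the asserted (weak) ill-posedness for every $s\in\R$, with a single pair of constants $c,C$ obtained by taking the minimum of the constants from the two regimes on a common time interval.

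The content here is genuinely a matter of assembling existing pieces rather than proving something new, so there is no serious obstacle; the only point requiring a little care is making sure the \emph{form} of the two statements is literally the same — in particular that the breather-based construction in \cite{AC2018} can be phrased as producing two sequences $u_n,v_n$ of genuine solutions with $\norm{u_n(0)-v_n(0)}_{H^s}\to 0$ and a lower bound of the shape $c|\sin t|\sim c|t|$, matching the wording of Theorem \ref{Illposed}. If \cite{AC2018} instead phrases its result as failure of uniform continuity of the data-to-solution map (which is the standard equivalent formulation), one reconciles the two by noting that the quantitative separation $\liminf_n \norm{u_n(t)-v_n(t)}_{H^s}\ge c|t|$ while $\norm{u_n(0)-v_n(0)}_{H^s}\to0$ is precisely a witness to that failure, and conversely the breather family gives such a witness. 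Once this cosmetic alignment is made, the corollary follows immediately.
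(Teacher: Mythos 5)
Your proposal is correct and matches the paper's (implicit) argument: the corollary is obtained exactly by concatenating Theorem \ref{Illposed} for $s>0$ with the breather-based ill-posedness of \cite{AC2018}, which covers the low-regularity range $s\le 0$, and the paper offers no further proof beyond this combination. Your extra care about the endpoint $s=0$ and about reconciling the two formulations is reasonable but not something the paper itself elaborates.
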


As already seen above, the 5th Gardner equation \eqref{5G} contains the mixed nonlinearities of 5th order KdV and mKdV equations \eqref{5kdv}-\eqref{5mkdv}, so that one can see both ill-posedness nature of semilinear and quasilinear equations. In the proof of Theorem \ref{Illposed}, approximate solutions are constructed in the following manner: the separation of the phase shift ($\mp t$) and the dispersion effect ($\Phi_N(t)$) in \eqref{hiwave} inspired by the observation on the Burgers equation. However, in low regularity Sobolev space ($L^2$ or below), it is not clear to see such a phenomenon, see \cite{KT2005, KT2008, Kwon2008}. Nonetheless, the cubic nonlinearity (5th mKdV nonlinearity) reveals another ill-posedness phenomenon, breaking the uniform continuity of the flow map by the self-interaction of a single high frequency wave in low regularity spaces \cite{AC2018}. This nature can be seen in some semilinear equations, for instance \cite{KPV2, BGT2002, BGT2003, CCT2003, Kwon, Ale1}. The mixed nonlinearities in \eqref{5G}, thus, ensure to claim the lack of the uniform continuity of the flow map of the 5th Gardner equation \eqref{5G} in all regularity Sobolev spaces.

\subsubsection{Global stability theory}
Moreover, once we have characterized the IVP for \eqref{5G} and with respect to stability properties of specific solutions of the \eqref{5G}, we present  the following stability result 
for the 5th order  breather solutions \eqref{5BreG}.

%
 
 \begin{thm}\label{StaBre1}
Let $\alpha, \beta \in  \R\backslash\{0\}$ be given. Breather solutions \eqref{5BreG}  of the 5th order Gardner equation  \eqref{5G} are orbitally stable for  $H^2$ perturbations, whenever the parameter
$\mu\in(0,\frac{\sqrt{\al^2+\bt^2}}{2})$.
\end{thm}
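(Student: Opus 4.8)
The plan is to adapt the Lyapunov functional approach to breather stability developed by Alejo and Mu\~noz for the mKdV and the third order Gardner breathers, now built on the $H^2$-level conservation laws of the fifth order flow. Denote by $B=B_{\alpha,\beta}$ the breather profile in \eqref{5BreG} (frozen at a convenient time): it is smooth, exponentially localized, and it carries two translation parameters, a spatial shift $x_1$ and an internal phase $x_2$, so that the breather manifold $\mathcal M_{\alpha,\beta}=\{\,B_{\alpha,\beta}(\cdot-x_1;x_2):x_1,x_2\in\R\,\}$ is two--dimensional. The first step is to record the nonlinear ODE satisfied by the profile: a direct computation with the explicit formula shows that $B$ solves a fourth order equation $G_{\alpha,\beta,\mu}(B)=0$, which is the Euler--Lagrange equation of the functional
\[
\mathcal H[u]\ :=\ E_{5\mu}[u]\ +\ a(\alpha,\beta,\mu)\,E_{\mu}[u]\ +\ b(\alpha,\beta,\mu)\,M[u],
\]
for an explicit choice of the constants $a,b$; that is, $\mathcal H'[B]=0$. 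Since $E_{5\mu}$, $E_\mu$ and $M$ (see \eqref{E5}, \eqref{E1}, \eqref{M1cor}) are all conserved along the $H^2$-flow of \eqref{5G}, $\mathcal H$ is itself a conservation law, and, $\mathcal H$ being in involution with the Hamiltonian $E_{5\mu}$ in the integrable hierarchy, the whole orbit $\mathcal M_{\alpha,\beta}$ remains a critical point of $\mathcal H$ under the flow.

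The core of the argument is a coercivity estimate for the Hessian. Writing $u=B(\cdot-x_1;x_2)+v$ one expands
\[
\mathcal H[u]-\mathcal H[B]\ =\ \tfrac12\,\mathcal Q_{x_1,x_2}[v]\ +\ N[v],\qquad \mathcal Q_{x_1,x_2}[v]\ =\ \langle \mathcal L_{x_1,x_2}v,\,v\rangle,
\]
where $\mathcal L=\mathcal H''[B]$ is a self--adjoint fourth order operator with smooth, exponentially decaying coefficients built from $B$, and $N[v]$ collects the terms of degree $\ge 3$ in $v$; since $H^2(\R)\hookrightarrow W^{1,\infty}(\R)$ and $B$ together with its derivatives are bounded, $|N[v]|\lesssim \|v\|_{H^2}^3+\cdots+\|v\|_{H^2}^6$. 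I will then show that there is $\lambda=\lambda(\alpha,\beta,\mu)>0$ such that
\[
\mathcal Q[v]\ \ge\ \lambda\,\|v\|_{H^2}^2\qquad\text{for every }v\in H^2(\R)\text{ with }(v,\partial_x B)=(v,\partial_{x_2}B)=0 .
\]
Following the strategy of Alejo and Mu\~noz, this is proved not by abstract spectral theory but by an explicit algebraic factorization of $\mathcal L$ tailored to the breather: modulo the two symmetry directions, which span $\ker\mathcal L$, the quadratic form $\mathcal Q[v]$ is rewritten as the integral of a perfect square against a strictly positive weight, reducing fourth order positivity to the known nonnegativity of a lower order (Sturm--Liouville type) form whose only zero modes are the translations, so that $\mathcal L$ has no negative directions and $\ker\mathcal L=T_B\mathcal M_{\alpha,\beta}$. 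It is exactly here that the hypothesis $\mu\in(0,\tfrac12\sqrt{\alpha^2+\beta^2})$ enters: it forces the constants $a,b$, hence the weights produced by the factorization, to have the correct sign (and, along the way, guarantees that $B$ itself is a smooth element of $H^2$). I expect this factorization to be the main obstacle, since $E_{5\mu}$ carries several quintic and sextic nonlinear terms absent from the third order problem, so verifying that the Hessian still admits such a clean decomposition demands a careful accounting of those contributions.

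The stability statement then follows by the standard modulation--and--continuity scheme. Given $u_0$ with $\|u_0-B_{\alpha,\beta}\|_{H^2}<\delta$, Theorem \ref{GWP1} yields a global $H^2$ solution $u(t)$. As long as $u(t)$ remains in a tubular neighborhood of $\mathcal M_{\alpha,\beta}$, the implicit function theorem produces $C^1$ parameters $x_1(t),x_2(t)$ for which $v(t):=u(t)-B_{\alpha,\beta}(\cdot-x_1(t);x_2(t))$ satisfies the two orthogonality conditions above, while the conservation of $M$ and $E_\mu$ prevents a drift of the internal scales away from $(\alpha,\beta)$. Using conservation of $\mathcal H$,
\[
\lambda\,\|v(t)\|_{H^2}^2\ \le\ \mathcal Q[v(t)]\ =\ 2\big(\mathcal H[u_0]-\mathcal H[B]\big)\ -\ 2N[v(t)]\ \lesssim\ \|v(0)\|_{H^2}^2\ +\ \|v(t)\|_{H^2}^3+\cdots,
\]
and a bootstrap argument closes the estimate for $\delta$ small, giving $\sup_{t}\|v(t)\|_{H^2}\lesssim\|v(0)\|_{H^2}$ and hence orbital stability of \eqref{5BreG} in $H^2(\R)$ modulo the two translation symmetries.
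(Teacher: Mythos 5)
Your overall architecture (Lyapunov functional $\mathcal H_\mu=E_{5\mu}+2(\beta^2-\alpha^2)E_\mu+(\alpha^2+\beta^2)^2M$, the fourth order elliptic equation $\mathcal W(B_\mu)=0$ as its Euler--Lagrange equation, expansion of $\mathcal H_\mu$ around the breather, coercivity plus modulation) matches the paper's. But there is a genuine gap at the heart of your coercivity step. You assert that, after the factorization, $\mathcal L=\mathcal H''[B]$ ``has no negative directions and $\ker\mathcal L=T_B\mathcal M_{\alpha,\beta}$,'' and you then run the bootstrap with only the two orthogonality conditions $(v,\partial_{x_1}B)=(v,\partial_{x_2}B)=0$. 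This is false for this operator: as in the classical Gardner and mKdV breather problems, $\mathcal L_\mu$ has exactly \emph{one} negative eigenvalue $-\lambda_0^2<0$ of multiplicity one (Lemma \ref{Lspectral}, item (6)), in addition to the two-dimensional kernel. Consequently the inequality $\mathcal Q[v]\ge\lambda\|v\|_{H^2}^2$ cannot hold on the codimension-two subspace you work in, and your final bootstrap does not close. The correct coercivity statement (item (7) of Lemma \ref{Lspectral}) requires a \emph{third} orthogonality condition, $\int v\,B_{-1}=0$, and the negative direction must then be eliminated dynamically: one uses the auxiliary direction $B_{0,\mu}$ satisfying $\mathcal L_\mu[B_{0,\mu}]=-B_\mu$ with $\int B_{0,\mu}B_\mu>0$ (item (4)), together with the mass constraint $M[u]=M[u_0]$, in the standard Weinstein/GSS fashion. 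It is precisely here, and in the signs of $\int\Lambda_\alpha B_\mu\,\mathcal L_\mu[\Lambda_\alpha B_\mu]>0$ and $\int\Lambda_\beta B_\mu\,\mathcal L_\mu[\Lambda_\beta B_\mu]<0$, that the hypothesis $\mu\in(0,\tfrac12\sqrt{\alpha^2+\beta^2})$ (i.e.\ $\Delta>0$) is used -- not merely to fix the signs of the coefficients $a,b$ in $\mathcal H_\mu$ as you suggest.

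A secondary point: your proposed mechanism for proving coercivity (an explicit pointwise factorization of $\mathcal Q$ as a weighted perfect square) is not what is done, and it is unlikely to succeed as stated because of the negative eigenvalue just mentioned. The paper instead imports the full spectral package from the classical Gardner breather analysis of \cite{AM1} -- identification of the continuous spectrum by comparison with the constant-coefficient operator $\mathcal L_{\mu,0}$, explicit computation of the kernel via the Wronskian of $B_1,B_2$, the quadratic-form evaluations on the scaling directions $\Lambda_\alpha B_\mu$, $\Lambda_\beta B_\mu$, and a counting argument yielding exactly one negative eigenvalue -- and only then deduces coercivity under the three orthogonality conditions. With that corrected spectral input, the remainder of your modulation-and-continuity scheme is sound and coincides with the paper's.
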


For more detailed statements and background about this stability property of breather solutions, see Section \ref{sect5}.

\section{Well-posedness results}\label{sec:WP}
\subsection{Setting}
It is well-known that the integrability of equations (fixed coefficients of the nonlinearities) is no longer important for mathematical analysis in the local well-posedness theory.  

\begin{rem}
As mentioned in Section \ref{0}, \eqref{5G} does not allow the scaling invariance. However, defining $u_{\lambda} := \lambda u (\lambda^5 t , \lambda x)$, $\lambda > 0$, ensures an equivalence between \eqref{5G} and
\begin{equation}\label{5G_Gen}
w_t+w_{5x}+10\mu^2\lambda^2w_{3x} +\mathcal N_2(w) + \mathcal N_3(w) + \mathcal{SN}(w) = 0,
\end{equation}
where $\mathcal N_2(w)$ is the nonlinearity from the fifth order KdV given by
\begin{equation}\label{N2}
\mathcal N_2(w) = 20\mu\lambda w_x w_{xx} + 40\mu\lambda ww_{xxx} + 180\mu^2\lambda^2 w^2w_x,
\end{equation}
$\mathcal N_3(w)$ is the nonlinearity from the fifth order mKdV given by
\[\mathcal N_3(w) = 10w^2w_{3x} + 10w_x^3 + 40 ww_xu_{xx} + 30 w^4w_x\]
and $\mathcal{SN}(w)$ is the rest terms generated from the transformation $u \mapsto \mu + u$, which is weaker compared to $\mathcal N_2(w)$ and $\mathcal N_3(w)$ in some sense, given by
\begin{equation}\label{SN}
\mathcal{SN}(w) = 120\mu^3\lambda^3ww_x + 120\mu\lambda w^3w_x.
\end{equation}
That is, $u_{\lambda}$, $\lambda > 0$ is a solution to \eqref{5G_Gen}, if and only if $u$ is a solution to \eqref{5G}. See Section \ref{sec:LWP} for the details.
\end{rem}

We use the notation $\widetilde{f}$ or $\mathcal F (f)$ for the space-time Fourier transform of $f$ defined by
\[\widetilde{f}(\tau, \xi)=\int _{\R ^2} e^{-ix\xi}e^{-it\tau}f(x,t) \;dxdt \]
for any $f \in \mathcal S'(\mathbb R \times \mathbb R)$.  Similarly, we use $\mathcal F_x$ (or  $\widehat{\;}$ ) and $\mathcal F_t$ to denote the Fourier transform with respect to space and time variable respectively. 

Let $\Z_+$ denote the set of nonnegative integers. For $k \in \Z_+$, let define dyadic intervals $I_k$, $k \in \Z_+$ as
\[I_0=\{\xi:|\xi|\le2\} \quad I_k=\{\xi:|\xi|\in [2^{k-1},2^{k+1}]  \} \quad k \ge 1.\]

Let $\eta_0: \R \to [0,1]$ denote a smooth bump function supported in $ [-2,2]$ and equal to $1$ in $[-1,1]$ with the following property of regularities:
\begin{equation}\label{eq:regularity}
\partial_n^{j} \eta_0(\xi) = O(\eta_0(\xi)/\langle \xi \rangle^j), \hspace{1em} j=0,1,2,
\end{equation}
as $\xi$ approaches end points of the support of $\eta$. For $k \in \Z_+ $, let 
\begin{equation}\label{chi}
\chi_0(\xi) = \eta_0(\xi) \quad \mbox{and} \quad \chi_k(\xi) = \eta_0(\xi/2^k) - \eta_0(\xi/2^{k-1}), \quad k \ge 1,
\end{equation} 
and
\[\chi_{[k_1,k_2]}=\sum_{k=k_1}^{k_2} \chi_k \quad \mbox{ for any} \ k_1 \le k_2 \in \Z_+ .\]
For the time-frequency decomposition, we use the cut-off function $\eta_j$, but the same as $\eta_j = \chi_j$, $j \in \Z_+$. For $k\in \Z$ let $P_k$ denote the (smooth) truncation operators on $L^2(\R)$ defined by $\widehat{P_ku}(\xi)= \chi_k(\xi)\widehat{u}(\xi)$. We also define the operators $P_k$ on $L^2(\R \times \R)$ by formulas $\mathcal F(P_ku)(\xi,\tau)=\chi_k(\xi)\mathcal F(u)(\tau,\xi)$.
For $l\in \Z$ let
\[P_{\le l}=\sum_{k \le l}P_k, \quad P_{\ge l}=\sum_{k \ge l}P_k. \]
For $\xi \in \R$, $w(\xi)=-\xi^5$ is the dispersion relation associated to the equation \eqref{5G_Gen}\footnote{Originally, we have $w(\xi) = -\xi^5 +10\mu^2\xi^3$ corresponding to the linear part of \eqref{5G}. However, for fixed $\mu$ and for large frequency $|\xi| \gg 1$, the third order term are negligible compared to the fifth order term.}. For $k \in\Z$ and $j \in \Z_+$ let
\[D_{k,j}=\{(\tau, \xi) \in \R \times \R : \xi \in  [2^{k-1},2^{k+1}] , \tau - w(\xi) \in I_j\},\quad D_{k, \le j}=\cup_{\ell \le j}D_{k, \ell}.\]

For $f \in L^2(\R)$, let $W(t)f \in C(\R:L^2)$ be the linear solution given by
\begin{equation}\label{Lin. Sol.}
\mathcal F_x[W(t)f](\xi,t)=e^{itw(\xi)}\widehat{f}(\xi).
\end{equation}

\subsection{Function spaces}\label{sec:sol space}

We introduce the $X^{s,b}$ spaces associated to \eqref{5G_Gen}, which is the completion of $\mathcal S'(\mathbb{R}^2)$ under the norm
\begin{equation}\label{Xsb}
\norm{f}_{X^{s,b}}=\norm{\langle \tau - w(\xi)\rangle^b\langle \xi \rangle^s \widetilde f}_{L^2(\R^2)},
\end{equation}
where $\langle \cdot \rangle = (1+|\cdot|^2)^{\frac12}$. This Fourier restriction norm method was first implemented by in its current form by Bourgain \cite{Bourgain1993} and further developed by Kenig, Ponce and Vega \cite{KPV1996} and Tao \cite{Tao2001}. The Fourier restriction norm method turns out to be very useful in the study of low regularity theory for the dispersive equations. We denote the localized space by $X_{T}$ defined by standard localization to the interval $[-T,T]$.

\medskip

As already mentioned in Section \ref{0}, the 5th Gardner equation \eqref{5G} is a quasilinear equation where the flow map is not uniformly continuous. This fact can be seen from \cite{Kwon2008}, which proves that the 5th order KdV equation \eqref{5kdv} is weakly ill-posed, since this phenomenon occurs precisely in a strong interaction between low and high frequencies localized data of the form
\[(u_{\le 0})\cdot(\partial_x^3u_{\gg 1})\]
which is also included in the nonlinearity of 5th order Gardner \eqref{5G}. For this reason, we must focus specifically on quadratic nonlinearity to prove the local well-posedness of the 5th Gardner equation \eqref{5G}. In what follows, we briefly introduce the functions spaces used in \cite{GKK2013}\footnote{The basic method is similar to that used in \cite{KP2015}, but it is chosen to avoid complicated calculations in the energy estimate.}.

\medskip

One of the purposes in this paper, as mentioned in Section \ref{0}, is to obtain $H^2$ global solutions to \eqref{5G}. Moreover, this regularity threshold is determined by the estimates of 
quadratic terms with three derivatives, which is already known from \cite{GKK2013, KP2015}. In what follows, we only focus on obtaining the estimates of cubic terms with three derivatives in $H^2$, since the cubic terms are another nontrivial and strong nonlinearities in \eqref{5G}. On the other hand, we expect that all estimates of this cubic terms can be obtained below $H^2$ compared to the quadratic nonlinearities, since the degree $3$ of nonlinearities allows more smoothing effects in \emph{high-low} interactions. However, we do not  here explore such estimates below $H^2$ for our purpose.  

\medskip

We fix $k \in \Z_+$, and define the weighted Besov-type ($X^{0,\frac12,1}$) space $X_k$ for frequency localized functions in $\widetilde I_k$,
\[X_k=\left\{ f \in L^2(\R^2) : \supp f \subset \R \times I_k, \quad \norm{f}_{X_k}<\infty\ \right\},\]
equipped with the norm
\[\norm{f}_{X_k}:=\sum_{j=0}^\infty
2^{j/2}\beta_{k,j}\norm{\eta_j(\tau-w(\xi)) f(\xi,\tau)}_{L^2_{\xi,\tau}},\]
where
\begin{eqnarray}\label{beta weight}
\beta_{k,j} =\left \{ \begin{array}{lr}
2^{j/2}, &k=0,\\
1+2^{(j-5k)/8}, &k \ge 1.
\end{array}
\right.
\end{eqnarray}

\begin{rem}
The use of the weight $\beta_{k,j}$ is essential to control the localized energy for the quadratic terms in $\mathcal N_2(u)$, in particular, the \emph{high-low} interaction components, where the low frequency component has the largest modulation. See Lemma \ref{lem:energy-quadratic}. Moreover, it enables us to avoid the logarithmic divergence in $H^2$ appearing in the energy estimates for the cubic nonlinearities in $\mathcal N_3(u)$, see Remark \ref{rem:weight effect} and Propositions \ref{prop:energy} and \ref{prop:energy diff}. 
\end{rem}

\begin{rem}
An opposite effect of the use of the weight is to worsen the \emph{high $\times$ high $\to$ low} interactions in the nonlinear estimates for the quadratic terms in $\mathcal N_2(u)$
\[P_{\leq 0}(P_{high}u\cdot P_{high}v_{xxx}).\]
However, thanks to the representation of the quadratic nonlinearities as the compact, conservative form, i.e., $c_1\px u\px^2u + c_2u\px^3u=c_1'\partial_x(\px u \px u)+c_2'\px(u\px^2u)$, one derivative is removed, and hence we are able to balance both purposes.
\end{rem}

\begin{rem}
Finally,  the choice of a parameter $\frac 18$ in the weight for high frequency can be replaced by any parameter in $[1/8, 3/16]$. However, another choice of parameter is not able to improve the result, since the essential effect of the weight occurs in the \emph{high-low} interactions, where the low frequency part has the largest modulation, as mentioned before.
\end{rem}

At each frequency $2^k$, we define functions spaces based on $X_k$, uniformly on the $2^{-2k}$ time scale. 
\[
F_k=\left\{ f \in L^2(\R^2) : \supp f \subset \R \times I_k, \quad \norm{f}_{F_k}<\infty \right\},
\]
equipped with the norm
\[\norm{f}_{F_k}=\sup\limits_{t_k\in \R}\norm{\mathcal F[f\cdot \eta_0(2^{2k}(t-t_k))]}_{X_k}\]
and
\[
N_k=\left\{ f \in L^2(\R^2) : \supp f \subset \R \times I_k, \quad \norm{f}_{N_k}<\infty \right\},
\]
equipped with the norm
\[\norm{f}_{N_k}=\sup\limits_{t_k\in \R}\norm{(\tau-\omega(\xi)+i2^{2k})^{-1}\mathcal F[f\cdot \eta_0(2^{2k}(t-t_k))]}_{X_k}.\]

The standard way to construct localized spaces gives, for $T \in (0,1]$, that
\[\begin{aligned}
F_k(T)=&\{f \in C([-T,T]:L^2): \norm{f}_{F_k(T)}=\inf_{\widetilde{f}=f \mbox{ in } [-T,T] \times \R}\|\widetilde f\|_{F_k}\},\\
N_k(T)=&\{f\in C([-T,T]:L^2): \norm{f}_{N_k(T)}=\inf_{\widetilde{f}=f \mbox{ in }  [-T,T] \times \R}\|\widetilde f\|_{N_k}\}.
\end{aligned}\]

We collect all pieces of spaces introduced above at dyadic frequency $2^k$ in the Littlewood-Paley way. For $s\geq 0$ and $T\in (0,1]$, we define function spaces for solutions and nonlinear terms:
\begin{eqnarray*}
&&F^{s}(T)=\left\{ u:\
\norm{u}_{F^{s}(T)}^2=\sum_{k=0}^{\infty}2^{2sk}\norm{P_k(u)}_{F_k(T)}^2 <\infty \right\},
\\
&&N^{s}(T)=\left\{ u:\
\norm{u}_{N^{s}(T)}^2=\sum_{k=0}^{\infty}2^{2sk}\norm{P_k(u)}_{N_k(T)}^2 <\infty \right\}.
\end{eqnarray*}
In order to take the short time structure for IVP of \eqref{5G}, it is required to define the energy space as follows: for $s\geq 0$ and $u\in
C([-T,T]:H^\infty)$
\[\norm{u}_{E^{s}(T)}^2=\norm{P_{\leq 0}(u(0))}_{L^2}^2+\sum_{k\geq 1}\sup_{t_k\in [-T,T]}2^{2sk}\norm{P_k(u(t_k))}_{L^2}^2.\]

\begin{rem}
The short time Fourier restriction norm method used in this work was introduced by Ionescu, Kenig and Tataru \cite{IKT2008}, where the local well-posedness of KP-I equation in the energy space was proved, and further developed in \cite{Guo2011, GPWW2011, Guo2012, GKK2013, KP2015, Kwak2016, Kwak2018, GO2018} and references therein. We also refer to \cite{KT2007, CCT2008} for different formulas of short time analysis. 
\end{rem}

For the extension argument of functions in the spaces introduced above, we follow from \cite{IKT2008} to define the set $S_k$ of $k$-\emph{acceptable time multiplication factors} for any $k\in \Z_+$:
\[S_k=\{m_k:\R\rightarrow \R: \norm{m_k}_{S_k}=\sum_{j=0}^{10} 2^{-2jk}\norm{\partial^jm_k}_{L^\infty}< \infty\}.\] 
Direct estimates using the definitions and \eqref{eq:pXk3} show that for any $s\geq 0$ and $T\in (0,1]$
\[\begin{cases}
\normb{\sum\limits_{k\in \Z_+} m_k(t)\cdot P_k(u)}_{F^{s}(T)}\lesssim (\sup_{k\in \Z_+}\norm{m_k}_{S_k})\cdot \norm{u}_{F^{s}(T)},\\
\normb{\sum\limits_{k\in \Z_+} m_k(t)\cdot P_k(u)}_{N^{s}(T)}\lesssim (\sup_{k\in \Z_+}\norm{m_k}_{S_k})\cdot \norm{u}_{N^{s}(T)},\\
\normb{\sum\limits_{k\in \Z_+} m_k(t)\cdot P_k(u)}_{E^{s}(T)}\lesssim (\sup_{k\in \Z_+}\norm{m_k}_{S_k})\cdot \norm{u}_{E^{s}(T)}.
\end{cases}\]

We end this subsection with the following important lemma.
\begin{lem}[Properties of $X_k$]\label{lem:propXk}
Let $k, l\in \Z_+$ with $l \le 5k$ and $f_k\in X_k$. Then
\begin{equation}\label{eq:pre2}
\begin{split}
&\sum_{j=l+1}^\infty 2^{j/2}\beta_{k,j}\normo{\eta_j(\tau-\omega(\xi)) \int_{\R}|f_k(\tau',\xi)| 2^{-l}(1+2^{-l}|\tau-\tau'|)^{-4}d\tau'}_{L^2}\\
&+2^{l/2}\normo{\eta_{\leq l}(\tau-\omega(\xi)) \int_{\R} |f_k(\tau',\xi)| 2^{-l}(1+2^{-l}|\tau-\tau'|)^{-4}d\tau'}_{L^2}\lesssim \norm{f_k}_{X_k}.
\end{split}
\end{equation}
In particular, if $t_0\in \R$ and $\gamma\in \mathcal S(\R)$, then
\begin{eqnarray}\label{eq:pXk3}
\norm{\mathcal F[\gamma(2^l(t-t_0))\cdot \mathcal F^{-1}(f_k)]}_{X_k}\lesssim \norm{f_k}_{X_k}.
\end{eqnarray}
\end{lem}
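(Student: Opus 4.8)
The plan is to reduce both displayed inequalities to standard facts about the one–dimensional Besov-type structure encoded in the $X_k$ norm, working at a fixed spatial frequency $\xi \in I_k$ so that everything becomes a statement about functions of the modulation variable $\tau$. First I would fix $\xi$ and abbreviate $g(\tau) = f_k(\tau,\xi)$, $\lambda = \tau - \omega(\xi)$, and decompose $g = \sum_{n\ge 0} g_n$ with $g_n = \eta_n(\lambda) g$, so that $\norm{f_k}_{X_k} = \sum_n 2^{n/2}\beta_{k,n}\norm{g_n}_{L^2_\tau}$ after integrating in $\xi$ (here I use Minkowski to pull the $\xi$-integration inside at the end). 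The convolution kernel $K_l(\sigma) = 2^{-l}(1+2^{-l}|\sigma|)^{-4}$ is an $L^1$-normalized bump at scale $2^l$, so $|g| * K_l$ has $L^2_\tau$ norm $\lesssim \norm{g}_{L^2_\tau}$ by Young; the point of the lemma is the \emph{weighted, modulation-localized} refinement of this.

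The key steps, in order: (1) For the piece $g_n$ with $n \le l$, convolution with $K_l$ essentially smears it over scale $2^l$, so $\eta_j(\lambda)(|g_n|*K_l)$ for $j > l$ is controlled by the tail decay $(1+2^{-l}|\lambda|)^{-4}$; summing the geometric-type series $\sum_{j>l} 2^{j/2}\beta_{k,j} 2^{-2(j-l)}$ (using $l \le 5k$ so that $\beta_{k,j} \lesssim 1 + 2^{(j-5k)/8} \lesssim 2^{(j-l)/8}\cdot(\text{stuff bounded by }\beta_{k,l})$, i.e. the weight grows slower than the kernel decays) gives a bound by $2^{l/2}\beta_{k,l}\norm{g_n}_{L^2} \le \norm{f_k}_{X_k}$. (2) For $g_n$ with $n > l$, convolution with $K_l$ (a narrower bump) barely moves it, so $\eta_j(\lambda)(|g_n|*K_l)$ is nonzero only for $j \sim n$, and one gets $\sum_{j\sim n}2^{j/2}\beta_{k,j}\norm{\eta_j(|g_n|*K_l)}_{L^2} \lesssim 2^{n/2}\beta_{k,n}\norm{g_n}_{L^2}$ directly from Young; summing over $n$ recovers $\norm{f_k}_{X_k}$. (3) The second term on the left of \eqref{eq:pre2}, with the $\eta_{\le l}$ cutoff and prefactor $2^{l/2}$, is handled the same way: contributions from $g_n$, $n\le l$, give $2^{l/2}\norm{g_n}_{L^2}\lesssim 2^{n/2}\beta_{k,n}\norm{g_n}_{L^2}$, and contributions from $n>l$ are again killed by the kernel tail. (4) For \eqref{eq:pXk3}: write the space-time Fourier transform of $\gamma(2^l(t-t_0))\cdot \mathcal F^{-1}(f_k)$ as a convolution in $\tau$ against $\mathcal F_t[\gamma(2^l(\cdot - t_0))](\tau) = 2^{-l}e^{-it_0\tau}\widehat\gamma(2^{-l}\tau)$, whose modulus is pointwise dominated (since $\gamma\in\mathcal S$) by $C_N \, 2^{-l}(1+2^{-l}|\tau|)^{-N}$, i.e. exactly a kernel of the form $K_l$ up to the choice $N=4$; then \eqref{eq:pXk3} is an immediate consequence of \eqref{eq:pre2} applied with $l$ and the observation that convolution commutes with the $\xi$-support restriction to $I_k$.

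The main obstacle is bookkeeping step (1): one must verify that the weight $\beta_{k,j}$ for $j$ up to the ``coherence threshold'' $5k$ does not outgrow the polynomial decay $(1+2^{-l}|\lambda|)^{-4}$ produced by the kernel — this is precisely why the hypothesis $l \le 5k$ and the exponent $1/8$ (versus the kernel exponent $4$, or even $2$) are imposed, and it is the only place where the specific form of $\beta_{k,j}$ in \eqref{beta weight} is used rather than just $\beta_{k,j}\gtrsim 1$. Everything else is Young's inequality plus summing geometric series; I would present (1) carefully and treat (2)--(4) briefly.
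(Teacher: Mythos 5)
The paper itself does not prove this lemma; it simply cites \cite{GKK2013}, where the argument is the standard Ionescu--Kenig--Tataru one: decompose $f_k=\sum_n g_n$ in modulation, use Young's inequality for output modulations $2^j\lesssim 2^{\max(n,l)}$, use the kernel tails for $2^j\gg 2^{\max(n,l)}$, and use $l\le 5k$ to keep the weight harmless. Your plan follows this skeleton, and your steps (2) and (4) are essentially correct. However, there is a genuine quantitative gap in your treatment of the low-modulation pieces $g_n$ with $n\le l$, i.e.\ precisely in steps (1) and (3). In step (3) you assert $2^{l/2}\norm{g_n}_{L^2}\lesssim 2^{n/2}\beta_{k,n}\norm{g_n}_{L^2}$; but for $k\ge1$ and $n\le l\le 5k$ one has $\beta_{k,n}\in[1,2]$, so this would require $2^{(l-n)/2}\lesssim 1$, which fails whenever $n\ll l$. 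Likewise, the final bound of step (1), $2^{l/2}\beta_{k,l}\norm{g_n}_{L^2}$, is not $\lesssim\norm{f_k}_{X_k}$: testing with $f_k$ concentrated at modulation $n=0$ shows it overshoots by a factor $2^{l/2}$, and summing your bounds over $n\le l$ therefore does not close.

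The missing ingredient is that for $n\le l$ you must not discard the smoothing of the kernel. Since $g_n$ is supported in $\set{|\tau-\omega(\xi)|\lesssim 2^n}$, Cauchy--Schwarz gives $\norm{g_n(\cdot,\xi)}_{L^1_\tau}\lesssim 2^{n/2}\norm{g_n(\cdot,\xi)}_{L^2_\tau}$, and the pointwise bound $|g_n|*K_l\lesssim 2^{-l}\norm{g_n}_{L^1_\tau}$ (together with the measure $\lesssim 2^{\max(j,l)}$ of the output modulation region) yields $\norm{\eta_{\le l}(|g_n|*K_l)}_{L^2}\lesssim 2^{(n-l)/2}\norm{g_n}_{L^2}$, and the corresponding tail-improved estimate for $j>l$ carries the same extra factor $2^{(n-l)/2}$. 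This factor is exactly what converts your $2^{l/2}$ into the admissible $2^{n/2}$; it is a gain coming from the kernel's $L^1\to L^\infty$ smoothing, not from the weight or from Young's inequality. With this correction (and noting that the problematic regime does not arise for $k=0$, where $l\le 5k$ forces $l=0$), the remainder of your plan, including the reduction of \eqref{eq:pXk3} to \eqref{eq:pre2} via the Schwartz decay of $\widehat{\gamma}$, goes through.
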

\begin{proof}
See \cite{GKK2013} for the proof. 
\end{proof}

\subsection{$L^2$-block estimates}
For $x,y \in \R_+$, $x \lesssim y$ means that there exists $C>0$ such that $x \le Cy$, and $x \sim y$ means $x \lesssim y$ and $y\lesssim x$. We also use $\lesssim_s$ and $\sim_s$ as similarly, where the implicit constants depend on $s$. Let $a_1,a_2,a_3,a \in \R$. The quantities $a_{max} \ge a_{sub} \ge a_{thd} \ge a_{min}$ can be conveniently defined to be the maximum, sub-maximum, third-maximum and minimum values of $a_1,a_2,a_3,a$ respectively.

\medskip
For $\xi_1,\xi_2 \in \R$, let denote the (quadratic) resonance function by
\[H = H(\xi_1,\xi_2) = w(\xi_1) + w(\xi_2) - w(\xi_1+\xi_2) =\frac52\xi_1\xi_2(\xi_1+\xi_2)(\xi_1^2+\xi_2^2+(\xi_1+\xi_2)^2).\]
Similarly, for $\xi_1,\xi_2,\xi_3 \in \R$, let
\begin{equation}\label{eq:tri-resonant function}
\begin{aligned}
G(\xi_1,\xi_2,\xi_3) &=  w(\xi_1) + w(\xi_2) + w(\xi_3) - w(\xi_1+\xi_2+\xi_3) \\
&= \frac52(\xi_1+\xi_2)(\xi_2 + \xi_3)(\xi_3 + \xi_1)(\xi_1^2 + \xi_2^2 +\xi_3^2+(\xi_1 + \xi_2 + \xi_3)^2)
\end{aligned}
\end{equation}
be the (cubic) resonance function. Such resonance functions play an important role in the nonlinear $X^{s,b}$-type estimates. 

\medskip

Let $f,g,h \in L^2(\R^2)$ be compactly supported functions. We define a quantity by
\[J_2(f,g,h) = \int_{\R^4} f(\zeta_1,\xi_1)g(\zeta_2,\xi_2)h(\zeta_1+\zeta_2+H(\xi_1,\xi_2), \xi_1+\xi_2) \; d\xi_1d\xi_2d\zeta_1\zeta_2. \]
The change of variables in the integration yields
\[J_2(f,g,h)=J_2(g^*,h,f)=J_2(h,f^*,g),\]
where $f^*(\zeta,\xi)=f(-\zeta,-\xi)$. From the identities
\[\xi_1 + \xi_2 = \xi_3 \quad \mbox{and} \quad (\tau_1 - w(\xi_1))+(\tau_2 - w(\xi_2)) = (\tau_3 - w(\xi_3)) + H(\xi_1,\xi_2)\]
on the support of $J_2(f^{\sharp},g^{\sharp},h^{\sharp})$, where $f^{\sharp}(\tau,\xi) = f(\tau-w(\xi),\xi)$ with the property $\norm{f}_{L^2} = \norm{f^{\sharp}}_{L^2}$, we see that $J(f^{\sharp},g^{\sharp},h^{\sharp})$ vanishes unless
\[2^{k_{max}} \sim 2^{k_{med}} \gtrsim 1 \quad \mbox{and} \quad 2^{j_{max}} \sim \max(2^{j_{med}}, |H|).\]

For compactly supported functions $f_i \in L^2(\R \times \R)$, $i=1,2,3,4$, we define 
\[J_3(f_1,f_2,f_3,f_4) = \int_{*}f_1(\zeta_1,\xi_1)f_2(\zeta_2,\xi_2)f_3(\zeta_3,\xi_3)f_4(\zeta_1 + \zeta_2 + \zeta_3  + G(\xi_1,\xi_2,\xi_3), \xi_1 + \xi_2 + \xi_3) ,\]
where the $\int_* = \int_{\R^6} \cdot \; d\xi_1d\xi_2 d\xi_3 d\zeta_1 d \zeta_2 d\zeta_3$. From the identities
\[\xi_1+\xi_2+\xi_3 = \xi_4 \quad \mbox{and} \quad (\tau_1 - w(\xi_1)) + (\tau_2 - w(\xi_2)) + (\tau_3 - w(\xi_3)) = (\tau_4 - w(\xi_4)) + G(\xi_1,\xi_2,\xi_3)\]
on the support of $J_3(f_1^{\sharp},f_2^{\sharp},f_3^{\sharp},f_4^{\sharp})$, we see that $J_3(f_1^{\sharp},f_2^{\sharp},f_3^{\sharp},f_4^{\sharp})$ vanishes unless
\begin{equation}\label{eq:tri-support property}
\begin{array}{c}
2^{k_{max}} \sim 2^{k_{sub}}\\
2^{j_{max}} \sim \max(2^{j_{sub}}, |G|),
\end{array}
\end{equation}
where $|\xi_i| \sim 2^{k_i}$ and $|\zeta_i| \sim 2^{j_i}$, $i=1,2,3,4$. A direct calculation shows 
\[|J_3(f_1,f_2,f_3,f_4)|=|J_3(f_2,f_1,f_3,f_4)|=|J_3(f_3,f_2,f_1,f_4)|=|J_3(f_1^{\ast},f_2^{\ast},f_4,f_3)|.\]

\medskip

We give $L^2$-block estimates for the quadratic and cubic nonlinearities. The bi- and tri-linear $L^2$-block estimates for the 5th order equations have already been introduced and used in several works, we refer to \cite{CLMW2009, CG2011,GKK2013, KP2015, Kwak2016, Kwak2018, CK2018-1, CK2018-2}.
\begin{lem}\label{lem:block estimate}
	Let $k_i \in \Z,j_i\in \Z_+,i=1,2,3$. Let $f_{k_i,j_i} \in L^2(\R\times\R) $ be nonnegative functions supported in $[2^{k_i-1},2^{k_i+1}]\times I_{j_i}$.
	
	(a) For any $k_1,k_2,k_3 \in \Z$ with $|k_{max}-k_{min}| \le 5$ and $j_1,j_2,j_3 \in \Z_+$, then we have
	\[J_2(f_{k_1,j_1},f_{k_2,j_2},f_{k_3,j_3}) \lesssim 2^{j_{min}/2}2^{j_{med}/4}2^{- \frac34 k_{max}}\prod_{i=1}^3 \|f_{k_i,j_i}\|_{L^2}.\]
	
	(b) If $2^{k_{min}} \ll 2^{k_{med}} \sim 2^{k_{max}}$, then for all $i=1,2,3$ we have
	\[J_2(f_{k_1,j_1},f_{k_2,j_2},f_{k_3,j_3}) \lesssim 2^{(j_1+j_2+j_3)/2}2^{-3k_{max}/2}2^{-(k_i+j_i)/2}\prod_{i=1}^3 \|f_{k_i,j_i}\|_{L^2}.\]
	
	(c) For any $k_1,k_2,k_3 \in \Z$ and $j_1,j_2,j_3 \in \Z_+$, then we have
	\[J_2(f_{k_1,j_1},f_{k_2,j_2},f_{k_3,j_3}) \lesssim 2^{j_{min}/2}2^{k_{min}/2}\prod_{i=1}^3 \|f_{k_i,j_i}\|_{L^2}.\]
\end{lem}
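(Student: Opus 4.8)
\textbf{Proof plan for Lemma \ref{lem:block estimate}.}
The three estimates are purely $L^2$-multilinear bounds, and the strategy is the now-standard one: bound $J_2$ by duality and Cauchy--Schwarz after isolating, in each frequency regime, the variable in which the convolution constraint can be ``solved''. Recall that on the support of $J_2(f_{k_1,j_1}^\sharp,f_{k_2,j_2}^\sharp,f_{k_3,j_3}^\sharp)$ one has $\xi_1+\xi_2=\xi_3$ and the modulation identity $(\tau_1-w(\xi_1))+(\tau_2-w(\xi_2))=(\tau_3-w(\xi_3))+H(\xi_1,\xi_2)$, so that two of the $j_i$ and $|H|$ obey $2^{j_{max}}\sim\max(2^{j_{med}},|H|)$, and $2^{k_{max}}\sim 2^{k_{med}}\gtrsim 1$. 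In each case I would write $J_2$ as $\int \widehat{(f_{k_1,j_1}g)}\,\overline{h}$ (or the symmetric version obtained from $J_2(f,g,h)=J_2(g^*,h,f)=J_2(h,f^*,g)$, to put the best-localized factor in the distinguished slot), and estimate the resulting convolution in $L^2$ via Young/Cauchy--Schwarz, picking up the measure of the region where the $\xi$- and $\tau$-constraints are simultaneously satisfied.

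\emph{Part (c)} is the soft one and I would do it first: integrate in $\tau_1,\tau_2$ freely against the two lowest-modulation profiles, which costs $2^{j_{min}/2}$ from Cauchy--Schwarz in the $\zeta$ variable carrying $I_{j_{min}}$, and integrate in the $\xi$-variable supported on the interval of length $\sim 2^{k_{min}}$, which costs $2^{k_{min}/2}$; the remaining integrals are absorbed by $\prod\|f_{k_i,j_i}\|_{L^2}$. \emph{Part (a)} is the ``balanced frequency'' case $|k_{max}-k_{min}|\le 5$: here one cannot gain from $k_{min}$, so the gain $2^{-3k_{max}/4}$ must come entirely from the Jacobian of the change of variables $\xi\mapsto H(\xi_1,\xi_2)$ (equivalently $w'$), which scales like $2^{4k_{max}}$ on a generic slice; after placing the two smallest modulations by Cauchy--Schwarz (cost $2^{j_{min}/2}2^{j_{med}/4}$, the $1/4$ power reflecting that only part of the larger modulation region is seen by the resonance constraint) one trades a $\xi$-integration against the $\tau$-constraint and extracts $2^{-3k_{max}/4}$. \emph{Part (b)} is the ``high $\times$ high $\to$ low'' (or low $\times$ high) unbalanced case $2^{k_{min}}\ll 2^{k_{med}}\sim 2^{k_{max}}$: now $|H|\sim 2^{4k_{max}}2^{k_{min}}$, the resonance is large and essentially determined, so one can integrate out one full modulation variable using the identity, producing the strong gain $2^{-3k_{max}/2}2^{-(k_i+j_i)/2}$ for whichever index $i$ one chooses to ``solve for'', at the price of the crude $2^{(j_1+j_2+j_3)/2}$ from Cauchy--Schwarz in the three $\zeta$'s.

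The main obstacle is the bookkeeping in part (a): getting the sharp exponent $2^{-3k_{max}/4}$ rather than something lossy requires being careful that the change of variables $\xi_1\mapsto H(\xi_1,\xi_2)$ (with $\xi_2$, or $\xi_1+\xi_2$, held fixed) has Jacobian bounded below by $\sim 2^{4k_{max}}$ \emph{uniformly} on the relevant sub-region, which fails near the zero set of $\partial_{\xi_1}H$; one must split the $\xi$-domain and in the degenerate sub-region use instead the other symmetry of $J_2$ so that a different pair of frequencies plays the distinguished role, or fall back on the elementary bound of part (c) there. The cubic analogues $J_3$ in \eqref{eq:tri-support property} will be handled in the same spirit but are not needed for this lemma. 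Since all of this is standard for fifth-order dispersion and appears in \cite{CLMW2009, CG2011, GKK2013, KP2015}, I would simply carry out the three Cauchy--Schwarz/change-of-variable computations and cite those references for the routine Jacobian estimates.
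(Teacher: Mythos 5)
Your outline is correct and is exactly the standard Cauchy--Schwarz/resonance-function argument from \cite{CLMW2009, CG2011, KP2015}; the paper itself gives no proof of this lemma and simply cites those same references. In particular you correctly identify the one delicate point, namely that in case (a) the map $\xi_1\mapsto H(\xi_1,\xi_2)$ can degenerate, which is why the sublevel-set measure is only $(2^{j_{med}}2^{-3k_{max}})^{1/2}$ and the exponent $2^{j_{med}/4}2^{-3k_{max}/4}$ appears after Cauchy--Schwarz.
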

\begin{proof}
We refer to \cite{CLMW2009, CG2011, KP2015} for the proof.
\end{proof}

\begin{cor}\label{cor:block estimate}
Assume $k_i \in \Z$ and $j_i\in \Z_+$, $i=1,2,3$ and $f_{k_i,j_i} \in L^2(\R\times\R) $ be functions supported in $D_{k_i,j_i}$, $i=1,2$.

(a) For any $k_1,k_2,k_3 \in \Z$ with $|k_{max}-k_{min}| \le 5$ and $j_1,j_2,j_3 \in \Z_+$, then we have
\[\| \mathbf{1}_{D_{k_3,j_3}}(\tau,\xi)(f_{k_1,j_1}\ast f_{k_2,j_2})\|_{L^2} \lesssim 2^{j_{min}/2}2^{j_{med}/4}2^{- \frac34 k_{max}}\prod_{i=1}^2 \|f_{k_i,j_i}\|_{L^2}.\]

(b) If $2^{k_{min}} \ll 2^{k_{med}} \sim 2^{k_{max}}$, then for all $i=1,2,3$ we have
\[\| \mathbf{1}_{D_{k_3,j_3}}(\tau,\xi)(f_{k_1,j_1}\ast f_{k_2,j_2})\|_{L^2} \lesssim 2^{(j_1+j_2+j_3)/2}2^{-3k_{max}/2}2^{-(k_i+j_i)/2}\prod_{i=1}^2 \|f_{k_i,j_i}\|_{L^2}.\]

(c) For any $k_1,k_2,k_3 \in \Z$ and $j_1,j_2,j_3 \in \Z_+$, then we have
\[\| \mathbf{1}_{D_{k_3,j_3}}(\tau,\xi)(f_{k_1,j_1}\ast f_{k_2,j_2})\|_{L^2} \lesssim 2^{j_{min}/2}2^{k_{min}/2}\prod_{i=1}^2 \|f_{k_i,j_i}\|_{L^2}.\]
\end{cor}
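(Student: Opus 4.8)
The plan is to derive all three estimates of Corollary~\ref{cor:block estimate} from Lemma~\ref{lem:block estimate} by one duality argument: the $L^2$-norm of the frequency-restricted convolution is the supremum of a pairing, and that pairing is exactly (the absolute value of) a value of the trilinear form $J_2$. The only delicate point is bookkeeping around the modulation shift $(\cdot)^\sharp$ introduced before Lemma~\ref{lem:block estimate}; it maps the axis-parallel boxes $[2^{k-1},2^{k+1}]\times I_j$, on which $J_2$ is estimated in Lemma~\ref{lem:block estimate}, onto the tilted regions $D_{k,j}$ of the Corollary, and it is precisely this shift that makes the resonance function $H$ appear in the third argument of $J_2$.

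First I would dualize. Fix $k_1,k_2,k_3\in\Z$, $j_1,j_2,j_3\in\Z_+$ and functions $f_{k_i,j_i}$ supported in $D_{k_i,j_i}$ for $i=1,2$. Then
\[
\big\|\mathbf{1}_{D_{k_3,j_3}}\big(f_{k_1,j_1}\ast f_{k_2,j_2}\big)\big\|_{L^2}
=\sup\Big\{\ \big|\langle h,\ f_{k_1,j_1}\ast f_{k_2,j_2}\rangle_{L^2(\R^2)}\big|\ :\ \|h\|_{L^2}\le1,\ \supp h\subset D_{k_3,j_3}\ \Big\}.
\]
Next I would move to $J_2$. For $i=1,2,3$, with the convention $f_{k_3,j_3}:=h$, set $g_i(\zeta,\xi):=f_{k_i,j_i}(\zeta+w(\xi),\xi)$; this is the inverse of $(\cdot)^\sharp$, so $g_i$ is supported in $[2^{k_i-1},2^{k_i+1}]\times I_{j_i}$ and $\|g_i\|_{L^2}=\|f_{k_i,j_i}\|_{L^2}$. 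Writing out the convolution, performing the two unit-Jacobian substitutions $(\xi_1,\xi_2)=(\xi-\eta,\eta)$ and $\zeta_i=\tau_i-w(\xi_i)$, and using $w(\xi_1)+w(\xi_2)=w(\xi_1+\xi_2)+H(\xi_1,\xi_2)$, one gets
\[
\langle h,\ f_{k_1,j_1}\ast f_{k_2,j_2}\rangle_{L^2(\R^2)}=J_2\big(g_1,\,g_2,\,\overline{g_3}\big),
\]
whence, since $J_2$ with nonnegative integrands is monotone and conjugation-invariant,
\[
\big|\langle h,\ f_{k_1,j_1}\ast f_{k_2,j_2}\rangle\big|\ \le\ J_2\big(|g_1|,\,|g_2|,\,|g_3|\big),
\]
with $|g_1|,|g_2|,|g_3|$ nonnegative and supported exactly as Lemma~\ref{lem:block estimate} demands.

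To finish, I would insert this into Lemma~\ref{lem:block estimate}. Part (a) (under $|k_{max}-k_{min}|\le5$), part (b) (under $2^{k_{min}}\ll2^{k_{med}}\sim2^{k_{max}}$), and part (c) (unconditionally) bound $J_2(|g_1|,|g_2|,|g_3|)$ by the respective right-hand sides of the Corollary times $\|g_1\|_{L^2}\|g_2\|_{L^2}\|g_3\|_{L^2}$; using $\|g_3\|_{L^2}=\|h\|_{L^2}\le1$ and $\|g_i\|_{L^2}=\|f_{k_i,j_i}\|_{L^2}$ for $i=1,2$ and taking the supremum over admissible $h$ gives parts (a), (b), (c) respectively. (When the restricted convolution is not identically zero the support analysis for $J_2$ recorded above forces $2^{k_{max}}\sim2^{k_{med}}\gtrsim1$, so the three cases cover everything; otherwise the inequalities are trivial.) I do not expect a real obstacle: the analytic weight is all carried by Lemma~\ref{lem:block estimate}, and the only step needing attention is the passage to $J_2$, where one must check that the changes of variables genuinely reproduce the third-slot argument $\zeta_1+\zeta_2+H(\xi_1,\xi_2)$ and preserve every $L^2$ norm — the remainder is formal.
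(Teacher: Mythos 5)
Your proposal is correct and is exactly the (implicit) argument the paper intends: the corollary is the dual reformulation of Lemma~\ref{lem:block estimate}, obtained by writing the restricted convolution norm as a supremum of pairings, applying the modulation shift $(\cdot)^\sharp$ so that the resonance function $H$ appears in the third slot of $J_2$, and invoking the lemma on the resulting nonnegative, box-supported functions. The bookkeeping (unit Jacobians, norm preservation, support transfer from $D_{k_i,j_i}$ to $[2^{k_i-1},2^{k_i+1}]\times I_{j_i}$) is handled correctly.
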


\begin{lem}\label{lem:tri-L2}
Let $k_i \in \Z$ and $j_i\in \Z_+$, $i=1,2,3,4$. Let $f_{k_i,j_i} \in L^2(\R \times \R) $ be nonnegative functions supported in $I_{j_i} \times [2^{k_i-1},2^{k_i+1}]$.

(a) For any $k_i \in \Z$ and $j_i \in \Z_+$, $i=1,2,3,4$, we
have
\[J_3(f_{k_1,j_1},f_{k_2,j_2},f_{k_3,j_3},f_{k_4,j_4}) \lesssim 2^{(j_{min}+j_{thd})/2}2^{(k_{min}+k_{thd})/2}\prod_{i=1}^4 \|f_{k_i,j_i}\|_{L^2}.\]

(b) Let $k_{thd} \le k_{max}-10$.

(b-1) If $(k_i,j_i) = (k_{thd},j_{max})$ for $i=1,2,3,4$, we have
 \[J_3(f_{k_1,j_1},f_{k_2,j_2},f_{k_3,j_3},f_{k_4,j_4}) \lesssim 2^{(j_1+j_2+j_3+j_4)/2}2^{-2k_{max}}2^{k_{thd}/2}2^{-j_{max}/2}\prod_{i=1}^4 \|f_{k_i,j_i}\|_{L^2}.\]

(b-2) If $(k_i,j_i) \neq (k_{thd},j_{max})$ for $i=1,2,3,4$, we have
\[ J_3(f_{k_1,j_1},f_{k_2,j_2},f_{k_3,j_3},f_{k_4,j_4}) \lesssim 2^{(j_1+j_2+j_3+j_4)/2}2^{-2k_{max}}2^{k_{min}/2}2^{-j_{max}/2}\prod_{i=1}^4 \|f_{k_i,j_i}\|_{L^2}.\]
\end{lem}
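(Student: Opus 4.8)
The plan is to prove Lemma~\ref{lem:tri-L2} by reducing each $J_3$ estimate to a sequence of bilinear $L^2$-block estimates from Corollary~\ref{cor:block estimate} (equivalently Lemma~\ref{lem:block estimate}), after first exploiting the symmetries $|J_3(f_1,f_2,f_3,f_4)|=|J_3(f_2,f_1,f_3,f_4)|=|J_3(f_3,f_2,f_1,f_4)|=|J_3(f_1^{\ast},f_2^{\ast},f_4,f_3)|$ to put the frequencies in a convenient order. Write $J_3$ as an iterated convolution: grouping the first two inputs, $J_3(f_1,f_2,f_3,f_4)=\int (f_1^{\sharp}\ast f_2^{\sharp})\cdot f_3^{\sharp}\cdot f_4^{\sharp}$, so that by Cauchy--Schwarz and the definition of $J_2$ one has $|J_3(f_{k_1,j_1},\dots,f_{k_4,j_4})|\lesssim \big\|\mathbf 1_{D}(f_{k_1,j_1}\ast f_{k_2,j_2})\big\|_{L^2}\cdot J_2\text{-type factor for the other pair}$, where $D$ is the output box forced by the support constraints $\xi_1+\xi_2=\xi_3+\xi_4$ (here I will be slightly informal about which two variables are paired; the point is that one always lands on a bilinear block estimate). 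The two support facts \eqref{eq:tri-support property}, namely $2^{k_{max}}\sim 2^{k_{sub}}$ and $2^{j_{max}}\sim\max(2^{j_{sub}},|G|)$, are the structural inputs that will be used repeatedly to decide which case of Corollary~\ref{cor:block estimate} applies.

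For part (a), which is the ``universal'' bound with no separation hypothesis, I would pair the two lowest-frequency inputs together and the two highest together, apply Corollary~\ref{cor:block estimate}(c) to each pair --- this gives a factor $2^{j_{\min}/2}2^{k_{\min}/2}$ from the low pair --- and then need a second gain $2^{j_{thd}/2}2^{k_{thd}/2}$ from the remaining pairing; since on the support $2^{k_{max}}\sim 2^{k_{sub}}$, the ``$k_{\min}$'' appearing in the second application of (c) for the complementary convolution is comparable to $2^{k_{thd}}$, and similarly for the modulations, which is exactly what produces $2^{(j_{\min}+j_{thd})/2}2^{(k_{\min}+k_{thd})/2}$. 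For part (b), where $k_{thd}\le k_{max}-10$, we are in the genuinely ``high $\times$ high $\to$ low, with a spectator low frequency'' regime: here one of the two high inputs together with $f_4$ (or the two highs together) should be grouped so that Corollary~\ref{cor:block estimate}(b) applies, producing the $2^{-3k_{max}/2}$ and $2^{-(k_i+j_i)/2}$ factors; the crude loss of the $2^{1/2}$ powers of every $j_i$ (instead of just $j_{\min}$) is the price of using (b) twice. The dichotomy (b-1)/(b-2) --- whether the distinguished low-frequency variable is also the one carrying the maximal modulation --- decides whether the leftover frequency weight is $2^{k_{thd}/2}$ or the better $2^{k_{\min}/2}$: in case (b-1) the maximal modulation sits on the low-frequency leg, so when we run the block estimate the ``$k_i$'' we are forced to extract a $2^{-k_i/2}$ from is one of the high ones, leaving only $2^{k_{thd}/2}$; in case (b-2) we may instead extract from the genuinely minimal frequency, and we also recover the explicit $2^{-j_{max}/2}$ from the high-modulation factor $\langle\tau-w(\xi)\rangle$ that is then no longer needed.

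Concretely the steps are: (1) normalize the ordering of $(k_i,j_i)$ using the symmetries of $J_3$; (2) record the support constraints \eqref{eq:tri-support property} and note when $|G|$ dominates the modulations, which lets us absorb $|G|\sim 2^{j_{max}}$ into gains; (3) for (a), pair $(1,2)\to$ low and use Corollary~\ref{cor:block estimate}(c) twice, tracking that $k_{\min}$ of the second convolution $\lesssim 2^{k_{thd}}$ by $2^{k_{max}}\sim 2^{k_{sub}}$; (4) for (b), pair so that a high$\times$high$\to$low block estimate, Corollary~\ref{cor:block estimate}(b), is available, and apply it once (resp. combine with (c) for the spectator leg) --- in (b-1) the constrained leg forces $2^{k_{thd}/2}$, in (b-2) one gets $2^{k_{\min}/2}$ and the spare $2^{-j_{max}/2}$; (5) multiply the resulting factors and check the exponents match the claimed bounds. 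The main obstacle is step (4): correctly identifying, case by case, which two of the four inputs to convolve first so that the hypotheses of the appropriate part of Corollary~\ref{cor:block estimate} are met --- in particular in (b-1) one must be careful that the leg on which the $2^{-k_i/2}$ loss is unavoidably taken is a high-frequency leg, so that the \emph{net} low-frequency power that survives is only $2^{k_{thd}/2}$ and not something worse; getting the bookkeeping of the three modulation exponents $j_i$ and the interplay with $|G|\sim 2^{j_{max}}$ exactly right is where all the work is, though it is entirely routine once the pairing is chosen. All of this is standard and appears in \cite{GKK2013, KP2015}, so I would also simply cite those works for the detailed verification.
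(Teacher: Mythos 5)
The paper itself gives no argument for this lemma beyond citing \cite{KP2015, Kwak2018}, so your closing fallback of deferring to the references is consistent with what the authors actually do, and your overall strategy (exploit the symmetries of $J_3$, then Cauchy--Schwarz into two bilinear convolutions estimated by Corollary~\ref{cor:block estimate}) is indeed the strategy of the cited proofs. However, your execution of part (a) contains a genuine error. If you pair the two lowest-frequency inputs together and the two highest together, the high pair contributes $2^{k_{sub}/2}$, not $2^{k_{thd}/2}$: the support property $2^{k_{max}}\sim 2^{k_{sub}}$ only says the two \emph{largest} frequencies are comparable and gives no relation between $k_{sub}$ and $k_{thd}$ (take $k_{\min}=k_{thd}=0$ and $k_{sub}=k_{max}=100$, which is perfectly compatible with $\xi_1+\xi_2+\xi_3=\xi_4$; your pairing then loses a factor $2^{50}$ against the stated bound). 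Likewise $j_{\min}$ need not lie in the low-frequency pair, so the factor $2^{j_{\min}/2}$ you extract from it is not justified. The correct move is to note that each of the two separation requirements --- put the two smallest $k$'s in different pairs, and put the two smallest $j$'s in different pairs --- excludes exactly one of the three pairings of $\{1,2,3,4\}$ into two pairs, so at least one admissible pairing always survives; applying Corollary~\ref{cor:block estimate}(c) to that pairing yields exactly $2^{(j_{\min}+j_{thd})/2}2^{(k_{\min}+k_{thd})/2}$.

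Part (b) as written is too loose to count as a proof. The output of each bilinear convolution sits at an intermediate frequency $|\xi_a+\xi_b|$, which is not one of the $k_i$ and must itself be dyadically localized before the hypothesis $2^{k_{\min}}\ll 2^{k_{med}}\sim 2^{k_{max}}$ of Corollary~\ref{cor:block estimate}(b) can even be checked for the chosen pair; and producing the combination $2^{-2k_{max}}2^{-j_{max}/2}$ requires the quantitative lower bound $2^{j_{max}}\gtrsim |G|\gtrsim 2^{4k_{max}}|\xi_a+\xi_b|$ for the small pairwise sum, which is exactly where the dichotomy (b-1)/(b-2) (whether the $k_{thd}$-leg carries $j_{max}$) enters; you allude to this but do not carry it out. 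Since these computations are precisely what \cite{KP2015} (Euclidean case) and \cite{Kwak2018} (periodic analogue) supply, citing them, as the paper does, is acceptable --- but the sketch you give for (a) would, if followed literally, prove a strictly weaker estimate.
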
 

\begin{proof}
We refer to \cite{KP2015, Kwak2018} for the proof. In \cite{Kwak2018}, the second author established (cubic) $L^2$-block estimates for functions $f_{k_i,j_i} \in L^2(\R \times \Z)$, but the proof, here, is almost identical and easier, see \cite{KP2015}.
\end{proof}

\begin{cor}\label{cor:tri-L2}
Let $k_i \in \Z$ and $j_i\in \Z_+$, $i=1,2,3,4$. Let $f_{k_i,j_i} \in L^2(\R \times \R) $ be nonnegative functions supported in $D_{k_i,j_i}$.

(a) For any $k_i \in \Z$ and $j_i \in \Z_+$, $i=1,2,3,4$, we
have
\[\| \mathbf{1}_{D_{k_4,j_4}}(\tau,\xi)(f_{k_1,j_1}\ast f_{k_2,j_2}\ast f_{k_3,j_3})\|_{L^2} \lesssim 2^{(j_{min}+j_{thd})/2}2^{(k_{min}+k_{thd})/2}\prod_{i=1}^4 \|f_{k_i,j_i}\|_{L^2}.\]

(b) Let $k_{thd} \le k_{max}-10$.

(b-1) If $(k_i,j_i) = (k_{thd},j_{max})$ for $i=1,2,3,4$, we have
 \[\| \mathbf{1}_{D_{k_4,j_4}}(\tau,\xi)(f_{k_1,j_1}\ast f_{k_2,j_2}\ast f_{k_3,j_3})\|_{L^2} \lesssim 2^{(j_1+j_2+j_3+j_4)/2}2^{-2k_{max}}2^{k_{thd}/2}2^{-j_{max}/2}\prod_{i=1}^4 \|f_{k_i,j_i}\|_{L^2}.\]

(b-2) If $(k_i,j_i) \neq (k_{thd},j_{max})$ for $i=1,2,3,4$, we have
\[\| \mathbf{1}_{D_{k_4,j_4}}(\tau,\xi)(f_{k_1,j_1}\ast f_{k_2,j_2}\ast f_{k_3,j_3})\|_{L^2} \lesssim 2^{(j_1+j_2+j_3+j_4)/2}2^{-2k_{max}}2^{k_{min}/2}2^{-j_{max}/2}\prod_{i=1}^4 \|f_{k_i,j_i}\|_{L^2}.\]
\end{cor}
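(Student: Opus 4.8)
The plan is to reduce Corollary~\ref{cor:tri-L2} to Lemma~\ref{lem:tri-L2} by a standard duality argument combined with the change of variables $\tau_i\mapsto\tau_i-w(\xi_i)$, which intertwines convolutions supported on the modulation--frequency regions $D_{k_i,j_i}$ with the multilinear form $J_3$ acting on functions supported on the boxes $I_{j_i}\times[2^{k_i-1},2^{k_i+1}]$. The corresponding statement for quadratic terms, Corollary~\ref{cor:block estimate}, will follow from Lemma~\ref{lem:block estimate} by exactly the same scheme with $J_3$, $G$ replaced by $J_2$, $H$.

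Concretely, I would fix nonnegative $f_{k_i,j_i}$ supported in $D_{k_i,j_i}$ for $i=1,2,3$. Since $f_{k_1,j_1}\ast f_{k_2,j_2}\ast f_{k_3,j_3}\ge 0$, $L^2$ duality gives
\[\norm{\mathbf 1_{D_{k_4,j_4}}(f_{k_1,j_1}\ast f_{k_2,j_2}\ast f_{k_3,j_3})}_{L^2}=\sup\ \int_{\R^2}(f_{k_1,j_1}\ast f_{k_2,j_2}\ast f_{k_3,j_3})(\tau,\xi)\, f_{k_4,j_4}(\tau,\xi)\,d\tau d\xi,\]
the supremum being over nonnegative $f_{k_4,j_4}$ supported in $D_{k_4,j_4}$ with $\norm{f_{k_4,j_4}}_{L^2}=1$ (restriction to nonnegative test functions is legitimate as the function being tested is nonnegative). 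Expanding the convolution, the integral on the right equals
\[\int_{\substack{\xi_1+\xi_2+\xi_3=\xi_4\\ \tau_1+\tau_2+\tau_3=\tau_4}}\ \prod_{i=1}^4 f_{k_i,j_i}(\tau_i,\xi_i).\]
Next I would substitute $\tau_i=\zeta_i+w(\xi_i)$ for $i=1,2,3$ (Jacobian $1$), so $\tau_4=\zeta_1+\zeta_2+\zeta_3+w(\xi_1)+w(\xi_2)+w(\xi_3)$, and invoke the cubic resonance identity $w(\xi_1)+w(\xi_2)+w(\xi_3)=G(\xi_1,\xi_2,\xi_3)+w(\xi_1+\xi_2+\xi_3)$ coming from \eqref{eq:tri-resonant function}. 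Setting $g_i(\zeta,\xi):=f_{k_i,j_i}(\zeta+w(\xi),\xi)$, one checks that each $g_i$ is supported in $I_{j_i}\times[2^{k_i-1},2^{k_i+1}]$ with $\norm{g_i}_{L^2}=\norm{f_{k_i,j_i}}_{L^2}$ by translation invariance, and the displayed integral becomes precisely $J_3(g_1,g_2,g_3,g_4)$ in the notation fixed before Lemma~\ref{lem:tri-L2}; in particular the fourth factor $g_4(\zeta_1+\zeta_2+\zeta_3+G(\xi_1,\xi_2,\xi_3),\xi_4)$ matches $f_{k_4,j_4}(\tau_4,\xi_4)$ thanks to the identity just used.

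Applying the three bounds of Lemma~\ref{lem:tri-L2} to $J_3(g_1,g_2,g_3,g_4)$ and taking the supremum over $g_4$ then gives parts (a), (b-1), (b-2) of Corollary~\ref{cor:tri-L2}: the right-hand sides in Lemma~\ref{lem:tri-L2} depend only on the indices $k_i,j_i$ and on the same combinatorial data ($k_{min},k_{thd},j_{min},j_{thd},j_{max}$, together with the dichotomy $(k_i,j_i)=(k_{thd},j_{max})$ versus $(k_i,j_i)\ne(k_{thd},j_{max})$), all of which are transported verbatim through the change of variables. There is no genuine analytic difficulty here; the only point requiring care is the bookkeeping of the substitution — verifying that its Jacobian is trivial, that $D_{k_i,j_i}$ is carried onto $I_{j_i}\times[2^{k_i-1},2^{k_i+1}]$ so the hypotheses of Lemma~\ref{lem:tri-L2} hold and the $L^2$ norms are preserved, that the complex conjugate on the dual function is harmless since everything can be taken nonnegative, and that the output-frequency constraint of the triple convolution reproduces exactly the shifted argument $\zeta_1+\zeta_2+\zeta_3+G(\xi_1,\xi_2,\xi_3)$ appearing in the definition of $J_3$.
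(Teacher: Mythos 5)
Your reduction is correct and is precisely the intended (implicit) derivation in the paper: Corollary \ref{cor:tri-L2} is stated without proof as the dual/convolution reformulation of Lemma \ref{lem:tri-L2}, obtained exactly by the duality pairing, the substitution $\tau_i=\zeta_i+w(\xi_i)$, and the identity $w(\xi_1)+w(\xi_2)+w(\xi_3)=G(\xi_1,\xi_2,\xi_3)+w(\xi_1+\xi_2+\xi_3)$, which carries $D_{k_i,j_i}$ onto $I_{j_i}\times[2^{k_i-1},2^{k_i+1}]$ with preserved $L^2$ norms. All the bookkeeping you flag checks out, so nothing further is needed.
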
 

We end this subsection  introducing the Strichartz estimates for the family of the fifth-order operators $\{e^{t\px^5}\}_{t=-\infty}^{\infty}$.
\begin{lem}[Strichartz estimates for $e^{t\px^5}$ operator \cite{CT2005}]\label{lem:strichartz}
Assume that $-1 < \sigma \le \frac32$ and $0 \le \theta \le 1$. Then there exists $C>0$ depending on $\sigma$ and $\theta$ such that
\[\norm{D^{\frac{\sigma\theta}{2}e^{t\px^5}\varphi}}_{L^q_tL^p_x} \le C \norm{\varphi}_{L^2}\]
for $\varphi \in L^2$, where $p=\frac{2}{1-\theta}$ and $q = \frac{10}{\theta(\sigma +1)}$. In particular, we have 
\[\norm{e^{t\px^5}P_k\varphi}_{L_{t,x}^6} \lesssim  2^{-k/2}\norm{P_k\varphi}_{L^2}, \quad k\ge 1.\]
\end{lem}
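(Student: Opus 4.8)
The plan is to reduce the whole family to two ``extreme'' members and to recover the rest by interpolation. After a Littlewood--Paley decomposition I would first prove a frequency-localized version of the estimate, with a constant depending on the dyadic frequency $2^k$ in a way that the weight $D^{\sigma\theta/2}$ exactly compensates, and then sum in $k$. The two extremes are the cheap ``Sobolev'' endpoint $\sigma=-1$ (where $q=\infty$) and the genuine Strichartz endpoint $\sigma=\tfrac32$ (where $q=4/\theta$).

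The endpoint $\sigma=-1$ is immediate: $e^{t\px^5}$ is a unitary group on $L^2$, so $\norm{e^{t\px^5}\varphi}_{L^2_x}=\norm{\varphi}_{L^2}$ for every $t$, and combining this with the one-dimensional Sobolev inequality $\norm{g}_{L^{2/(1-\theta)}(\R)}\lesssim\norm{D^{\theta/2}g}_{L^2(\R)}$ gives $\norm{D^{-\theta/2}e^{t\px^5}\varphi}_{L^\infty_tL^p_x}\lesssim\norm{\varphi}_{L^2}$ with $p=2/(1-\theta)$, which is the claim at $\sigma=-1$. For the endpoint $\sigma=\tfrac32$ I would start from the frequency-localized dispersive estimate. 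On the annulus $|\xi|\sim2^k$, $k\ge1$, the phase $x\xi+t\xi^5$ has $|\partial_\xi^2(t\xi^5)|=20|t||\xi|^3\sim|t|2^{3k}$, so van der Corput's lemma applied to the kernel $\int\chi_k(\xi)e^{i(x\xi+t\xi^5)}\,d\xi$ yields $\norm{e^{t\px^5}P_k\varphi}_{L^\infty_x}\lesssim|t|^{-1/2}2^{-3k/2}\norm{P_k\varphi}_{L^1_x}$. Interpolating this with the $L^2$ isometry gives the fixed-time bound $\norm{e^{t\px^5}P_k\varphi}_{L^p_x}\lesssim|t|^{-\theta/2}2^{-3k\theta/2}\norm{P_k\varphi}_{L^{p'}_x}$ for $p=2/(1-\theta)$. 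A $TT^\ast$ argument reduces the space-time estimate to bounding the Duhamel-type operator $G\mapsto\int_\R e^{(t-s)\px^5}P_k^2G(s)\,ds$ from $L^{q'}_tL^{p'}_x$ to $L^q_tL^p_x$; feeding in the last display and applying the Hardy--Littlewood--Sobolev inequality in the time variable closes this with $\tfrac2q=\tfrac\theta2$, i.e. $q=\tfrac4\theta=\tfrac{10}{\theta(\sigma+1)}$ at $\sigma=\tfrac32$, and with a localized constant $\lesssim2^{-3k\theta/4}$. Since $p,q\ge2$, the Littlewood--Paley square-function estimate allows summation over $k\ge1$ once the weight $D^{\sigma\theta/2}=D^{3\theta/4}$ is restored to absorb the $2^{-3k\theta/4}$; the low-frequency block $P_{\le0}e^{t\px^5}\varphi$ is treated separately, using the weaker global decay $|t|^{-1/5}$ (from the vanishing of $\partial_\xi^2(t\xi^5)$ at $\xi=0$), or simply Bernstein, which is harmless at bounded frequency.

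With the two endpoints available, for each fixed $\theta\in(0,1)$ (hence fixed $p$) I would interpolate---using that $e^{t\px^5}$ commutes with the fractional derivatives $D^z$, so both estimates sit in analytic families and Stein interpolation applies---between the $\sigma=-1$ estimate (at $q=\infty$, smoothing index $-\theta/2$) and the $\sigma=\tfrac32$ estimate (at $q=4/\theta$, smoothing index $3\theta/4$), which produces every intermediate $\sigma\in(-1,\tfrac32]$ with the stated relations $p=2/(1-\theta)$ and $q=10/(\theta(\sigma+1))$; the endpoint values $\theta\in\{0,1\}$ require only minor modifications (or a further limiting argument). Finally, the ``in particular'' claim is the special case $\theta=\tfrac23$, $\sigma=\tfrac32$, for which $p=q=6$ and $D^{\sigma\theta/2}=D^{1/2}$, so $\norm{D^{1/2}e^{t\px^5}\varphi}_{L^6_{t,x}}\lesssim\norm{\varphi}_{L^2}$; applying this to $P_k\varphi$, on whose frequency support $D^{1/2}$ is comparable to multiplication by $2^{k/2}$, yields $\norm{e^{t\px^5}P_k\varphi}_{L^6_{t,x}}\lesssim2^{-k/2}\norm{P_k\varphi}_{L^2}$ for $k\ge1$.

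I expect the main obstacle to be the genuine Strichartz endpoint $\sigma=\tfrac32$: one needs the \emph{sharp} frequency-localized decay rate $|t|^{-1/2}$, together with the accompanying gain $2^{-3k/2}$, on the annulus $|\xi|\sim2^k$---the unlocalized Airy-type rate $|t|^{-1/5}$ is by itself too weak to generate the derivative gain---and then careful bookkeeping in the $TT^\ast$/Hardy--Littlewood--Sobolev step so that the $k$-dependent constants exactly match the weight $D^{\sigma\theta/2}$ and the $k$-sum converges. The degeneracy of the phase at $\xi=0$ is a minor nuisance that forces a separate, softer treatment of the low frequencies.
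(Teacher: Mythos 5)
The paper does not prove this lemma at all: it is quoted verbatim from Cui--Tao \cite{CT2005}, and only its frequency-localized consequence $\norm{e^{t\px^5}P_k\varphi}_{L^6_{t,x}}\lesssim 2^{-k/2}\norm{P_k\varphi}_{L^2}$ is ever used. Your argument is essentially the standard proof from that reference (and from Kenig--Ponce--Vega): localized van der Corput dispersive estimate, $TT^*$ plus Hardy--Littlewood--Sobolev, Littlewood--Paley summation, and Stein interpolation between the $\sigma=-1$ (unitarity plus Sobolev) and $\sigma=\tfrac32$ endpoints; your exponent bookkeeping checks out, including the identification of the $L^6$ case as $\theta=\tfrac23$, $\sigma=\tfrac32$ with $C_k=2^{-3k\theta/4}=2^{-k/2}$.

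One genuine soft spot: your suggested treatment of the low-frequency block at the $\sigma=\tfrac32$ endpoint via the unweighted global decay $|t|^{-1/5}$ does not close, because Hardy--Littlewood--Sobolev forces $2/q=\gamma$ exactly, so a $|t|^{-\theta/5}$ kernel produces $q=10/\theta$ rather than the required $q=4/\theta$. The correct fix is the weighted global dispersive estimate $\norm{D^{3/2}e^{t\px^5}f}_{L^\infty_x}\lesssim |t|^{-1/2}\norm{f}_{L^1_x}$, in which the vanishing of the multiplier $|\xi|^{3/2}$ at $\xi=0$ compensates the degeneracy of $\partial_\xi^2(t\xi^5)$ there (this is the $m=5$ case of the Kenig--Ponce--Vega oscillatory integral estimates \cite{KPV1991}); with that replacement, and the acknowledged limiting argument at $\theta\in\{0,1\}$, your proof is complete. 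For the purposes of this paper the issue is moot, since only the $k\ge 1$ localized estimate is needed and your argument establishes it directly.
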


\subsection{Nonlinear estimates}\label{sec:nonlinear}
We first recall from \cite{GKK2013} the bilinear estimates as follows:
\begin{prop}[Nonlinear estimates for $\mathcal N_2(u)$, \cite{GKK2013}]\label{prop:bilinear}
\begin{itemize}
 \item[(a)] If $s \ge 1$, $T\in(0,1]$, and $u,v\in F^s(T)$ then 
\[\|\mathcal N_2(u)\|_{N^s(T)} \lesssim \|u\|_{F^{s}(T)}^2 + \|u\|_{F^{s}(T)}^3.\]
and
\[\|\mathcal N_2(u) - \mathcal N_2(v)\|_{N^s(T)} \lesssim \left(\|u\|_{F^s(T)} + \|v\|_{F^s(T)} \right)\|u-v\|_{F^s(T)} + \left(\|u\|_{F^s(T)}^2 + \|v\|_{F^s(T)}^2 \right)\|u-v\|_{F^s(T)}.\]
\item[(b)] If $T \in (0,1]$, $u, v \in F^0(T) \cap F^{2}(T)$, then
\[\|\mathcal N_2(u) - \mathcal N_2(v)\|_{N^0(T)}\lesssim \left(\|u\|_{F^2(T)} + \|v\|_{F^2(T)}\right)\|u-v\|_{F^{0}(T)} + \left(\|u\|_{F^2(T)}^2 + \|v\|_{F^2(T)}^2\right)\|u-v\|_{F^{0}(T)}.\]
\end{itemize}
\end{prop}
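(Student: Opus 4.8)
Since Proposition \ref{prop:bilinear} is quoted from \cite{GKK2013}, the cleanest route is simply to cite that reference; for completeness I would outline the argument, which rests entirely on the $L^2$-block estimates of Corollary \ref{cor:block estimate} (and Corollary \ref{cor:tri-L2} for the cubic term) together with the short-time structure of $F_k$, $N_k$ and the weight $\beta_{k,j}$. The first step is to put the quadratic part of $\mathcal{N}_2$ into conservative form, $20\mu\lambda u_xu_{xx}+40\mu\lambda uu_{xxx}=40\mu\lambda\,\px(uu_{xx})-10\mu\lambda\,\px(u_x^2)$, and $180\mu^2\lambda^2u^2u_x=60\mu^2\lambda^2\,\px(u^3)$, so that one derivative sits outside the product (where it is harmless, even helpful, against the resolvent defining $N_k$) and at most two derivatives act on a single factor inside. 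After Littlewood--Paley decomposition one is reduced, via the definitions of $F_k(T)$, $N_k(T)$ and \eqref{eq:pXk3}, to bounding dyadic pieces $P_{k_4}\px\bigl(P_{k_1}u\cdot\px^2P_{k_2}u\bigr)$ (and symmetric variants) by $2^{2sk_4}$-weighted sums of $\|\mathbf 1_{D_{k_4,j_4}}(f_{k_1,j_1}\ast f_{k_2,j_2})\|_{L^2}$, with $f_{k_i,j_i}$ the space--time localizations of $P_{k_i}u$ on the $2^{-2k_i}$-time window. One then splits into (i) the comparable regime $|k_{\max}-k_{\min}|\le5$, (ii) \emph{high}$\times$\emph{high}$\to$\emph{low}, $k_1\sim k_2\gg k_4$, and (iii) \emph{high}$\times$\emph{low}$\to$\emph{high}, $k_4\sim k_1\gg k_2$.

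In regimes (i) and (ii) I would exploit that on the support of the convolution $2^{j_{\max}}\sim\max(2^{j_{\mathrm{med}}},|H|)$ with $H=\frac52\xi_1\xi_2(\xi_1+\xi_2)(\xi_1^2+\xi_2^2+(\xi_1+\xi_2)^2)$; in (ii) one has $|H|\sim2^{4k_{\max}}2^{k_{\min}}$, a large modulation gain, and applying Corollary \ref{cor:block estimate}(b) with the index $i$ chosen so that $k_i=k_{\max}$ trades the remaining derivatives for the factor $2^{-3k_{\max}/2}2^{-(k_{\max}+j_i)/2}$, after which the modulation sums converge with room to spare; regime (i) is even easier via Corollary \ref{cor:block estimate}(a). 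The delicate case is regime (iii) when the \emph{low-frequency factor carries the largest modulation}: this is exactly the coherent interaction of Remarks \ref{rem:failure}--\ref{rem:recover}, for which no standard $X^{s,b}$ bound survives. The genuinely problematic sub-case is $k_2=0$ with $|\xi_2|\lesssim2^{-2k_4}$ (so $|H|\lesssim2^{2k_4}$); here the short-time window $2^{-2k_4}$ confines the high-frequency outputs to $|\tau-\omega|\lesssim2^{2k_4}$, and estimating the $k_2=0$ factor in $X_0$—whose weight $\beta_{0,j}=2^{j/2}$ makes the total modulation weight there $2^{j}$—supplies the extra $2^{j_{\min}/2}$ needed to close Corollary \ref{cor:block estimate}(b); since $k_4\sim k_1$ the $2^{2sk}$-weights match and the constant is uniform in $s\ge1$. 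The cubic term $\px(u^3)$ is treated identically using Corollary \ref{cor:tri-L2}, producing the $\|u\|_{F^s(T)}^3$ contribution.

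The difference estimates follow by multilinearity: writing $\mathcal N_2(u)-\mathcal N_2(v)$ as a sum of bi- and tri-linear terms each carrying exactly one factor $u-v$ and the others equal to $u$ or $v$, the same case analysis applies verbatim and yields part (a). For part (b) at regularity $s=0$ one places the two non-difference factors in $F^2(T)$ and the $u-v$ factor in $F^0(T)$; the key observation is that in every interaction the highest-frequency factor is one of the $F^2$ factors, so the $2^{2k_{\max}}$ from its $F^2$-norm absorbs the (at most two) interior derivatives while the $F^0$-norm of $u-v$ needs no frequency weight—this is precisely why the estimate fails in $F^s$ alone but holds on $F^0\cap F^2$. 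The main obstacle is entirely the resonant \emph{high}$\times$\emph{low}$\to$\emph{high} interaction of regime (iii); once the combined effect of the short-time localization and the weight $\beta_{0,j}$ is accounted for there, the remaining cases reduce to a lengthy but routine summation of the $L^2$-block estimates, as carried out in \cite{GKK2013}.
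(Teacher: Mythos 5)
Your proposal takes essentially the same route as the paper: the paper's own ``proof'' consists of deferring the quadratic terms to \cite{GKK2013} and observing that the cubic term $180\mu^2\lambda^2u^2u_x$ is easier (only one total derivative, handled as in the proof of Proposition \ref{prop:trilinear}), and your outline of the conservative-form rewriting, the block-estimate case analysis, and the role of the short-time localization is faithful to that reference. One caveat: your stated ``key observation'' for part (b) --- that in every interaction the highest-frequency factor is one of the $F^2$ factors --- is false for the term $v\,\partial_x^3(u-v)$, where the difference factor can sit at the highest frequency; that \emph{high-low} interaction closes in $N^0$ with $\|u-v\|_{F^0}$ because it is exactly borderline at every regularity via the short-time gain of Remark \ref{rem:recover}, not because of the $F^2$ weight on the low-frequency factor.
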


\begin{proof}
The proof for the quadratic term, we refer to \cite{GKK2013}. On the other hand, one can easily control the cubic term in $\mathcal N_2(u)$ rather than not only the quadratic term in $\mathcal N_2(u)$, but also the cubic term in $\mathcal N_3(u)$, since the cubic term in $\mathcal N_2(u)$ contains only one (total) derivative. Hence we omit the details, but one can capture the estimates in the proof of Proposition \ref{prop:trilinear}.
\end{proof}

\begin{prop}[Nonlinear estimates for $\mathcal N_3(u)$]\label{prop:trilinear}
(a) If $s \ge 2$, $T\in(0,1]$, and $u,v\in F^s(T)$ then 
\begin{equation}\label{eq:trilinear1.1}
\|\mathcal N_3(u)\|_{N^s(T)} \lesssim \|u\|_{F^{s}(T)}^3 + \|u\|_{F^{s}(T)}^5.
\end{equation}
and
\[\|\mathcal N_3(u) - \mathcal N_3(v)\|_{N^s(T)} \lesssim \left(\|u\|_{F^{s}(T)}^2 + \|v\|_{F^{s}(T)}^2  \right)\|u-v\|_{F^s(T)} + \left(\|u\|_{F^{s}(T)}^4 + \|v\|_{F^{s}(T)}^4 \right)\|u-v\|_{F^s(T)}.\]

(b) If $T \in (0,1]$, $u, v \in F^0(T) \cap F^{2}(T)$, then
\[\|\mathcal N_3(u) - \mathcal N_3(v)\|_{N^0(T)}\lesssim \left(\|u\|_{F^2(T)}^2 + \|v\|_{F^2(T)}^2  \right)\|u-v\|_{F^0(T)} + \left(\|u\|_{F^2(T)}^4 + \|v\|_{F^2(T)}^4 \right)\|u-v\|_{F^0(T)}.\]
\end{prop}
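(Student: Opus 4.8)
The plan is to reduce everything to the $L^2$-block estimates for the cubic nonlinearity (Lemma~\ref{lem:tri-L2} and Corollary~\ref{cor:tri-L2}) by passing to the Fourier side and a Littlewood-Paley decomposition. First I would write $\mathcal N_3(u)$ as a sum of quartic (cubic-in-$u$) and sextic (quintic-in-$u$) terms; the quintic terms are genuinely lower order in the sense that they carry no derivatives in a coherent high-low configuration, so after the standard extension argument they are controlled by iterating the cubic estimate and hence I would focus on the three genuinely cubic terms $w^2 w_{3x}$, $w_x^3$ and $w w_x w_{xx}$. For each, after extending to global-in-time functions (losing at most a constant by the localization construction of $F^s(T)$, $N^s(T)$), I would decompose $u = \sum_k P_k u$ and estimate the $N_k$-norm of $P_{k_4}(\text{product})$ by duality: it suffices to bound $J_3(f_{k_1,j_1}, f_{k_2,j_2}, f_{k_3,j_3}, g_{k_4,j_4})$ where $f_{k_i,j_i}$ are the $(k_i,j_i)$-localized pieces coming from the factors of $u$ carrying the appropriate symbol weights (powers of $\xi$ for the derivatives) and $g_{k_4,j_4}$ is a normalized dual function. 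The symbol of, say, $w^2 w_{3x}$ contributes a factor $\lesssim 2^{3 k_{\max}}$ (the three derivatives land on whichever frequency, but by the support property $2^{k_{\max}} \sim 2^{k_{\mathrm{sub}}}$ so only two of the four frequencies are comparable-large), and I would distribute it so that it never sits on an isolated small frequency.

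Next I would do the standard case analysis on the frequency configuration: (i) $2^{k_{\max}} \sim 2^{k_{\mathrm{thd}}}$ (three or more comparable large frequencies), where part (a) of Lemma~\ref{lem:tri-L2} combined with the $\beta_{k,j}$-weighted $X_k$-norm and the $(\tau - \omega(\xi) + i 2^{2k})^{-1}$ gain in $N_k$ yields enough powers of $2^{-k}$ to absorb the $2^{3k_{\max}}$ from the symbol and sum in $k$ for $s \ge 2$; (ii) the resonant/high-low case $2^{k_{\mathrm{thd}}} \le k_{\max} - 10$, where I split further according to whether the large-modulation variable is the low-frequency one. In sub-case (ii), the crucial sub-sub-case is $(k_i,j_i) = (k_{\mathrm{thd}}, j_{\max})$, i.e.\ the third (small) frequency carries the output modulation — this is exactly where the resonance function estimate $2^{j_{\max}} \sim \max(2^{j_{\mathrm{sub}}}, |G|)$ with $|G| \sim 2^{4 k_{\max}} 2^{k_{\mathrm{thd}}}$ is used via the $2^{-j_{\max}/2}$ factor in Lemma~\ref{lem:tri-L2}(b-1), and the $\beta_{k,j}$-weight on the low-frequency factor plays its essential role in avoiding the $H^2$-logarithmic divergence noted in the remarks. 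Summing the geometric series in the (four) dyadic parameters then gives \eqref{eq:trilinear1.1}.

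For the difference estimates in (a) and (b), I would exploit multilinearity: $\mathcal N_3(u) - \mathcal N_3(v)$ is a sum of trilinear (resp.\ pentalinear) expressions each having $u - v$ in one slot and $u$ or $v$ in the others, so the identical block-estimate bookkeeping applies verbatim, simply reading off the norm on the slot holding $u - v$. The one subtlety for part (b) is that the energy/nonlinear estimate for the difference must be done at regularity $s = 0$ while the factors $u, v$ are only in $F^0 \cap F^2$; here I would place the $u - v$ factor in $F^0$ and use the $F^2$-norm on the other two factors, checking that in every frequency configuration the derivative loss $2^{3k_{\max}}$ can be compensated by two factors of $2^{-k_{\max}}$ (at worst) from the high-frequency inputs measured in $F^2$ plus the dispersive/modulation gains — i.e.\ one never needs more than $s = 2$ smoothing on any single high-frequency factor and never needs positive regularity on the $u-v$ factor, which is precisely why the cubic terms are (as the text claims) strictly easier than the quadratic ones.

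The main obstacle I expect is the high-low sub-case $(k_{\mathrm{thd}}, j_{\max})$ described above: making the weight $\beta_{k,j}$ interact correctly with the $2^{-j_{\max}/2}$ gain so that, after the $2^{3k_{\max}}$ symbol factor, one is left with a strictly summable expression in $H^2$ rather than a borderline $\sum_k 1$. Everything else is routine once Lemma~\ref{lem:tri-L2} and Corollary~\ref{cor:tri-L2} are in hand; the bookkeeping is lengthy but follows the template of \cite{GKK2013, KP2015, Kwak2018}, so in the actual proof I would present the hardest configuration in detail and indicate that the remaining ones are handled by the same scheme with strictly better numerology.
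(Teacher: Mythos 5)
Your overall architecture --- Littlewood--Paley plus modulation decomposition, reduction to the trilinear $L^2$-block estimates of Lemma~\ref{lem:tri-L2}/Corollary~\ref{cor:tri-L2}, a case analysis on the frequency configuration, and multilinearity for the difference bounds --- is the same as the paper's, and your treatment of the quintic term and of part (b) (putting $u-v$ in $F^0$ and the other slots in $F^2$) is consistent with what is actually done. However, there is one genuinely missing step: the configurations in which the \emph{output} frequency is much smaller than the input frequencies (\emph{high-high-high $\Rightarrow$ low} and \emph{high-high-low $\Rightarrow$ low}). You describe the passage from $N^s(T)$ to global norms as ``losing at most a constant by the localization construction,'' but the spaces $F_k$ and $N_k$ carry a frequency-dependent time scale $2^{-2k}$: the $N_{k_4}$-norm tests the product against a cutoff of width $2^{-2k_4}$, while the $F_{k_i}$-norms of the high-frequency inputs only control them on the much shorter scale $2^{-2k_{\max}}$. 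One must therefore insert a partition of unity at scale $2^{-2k_{\max}}$ and sum over $\sim 2^{2(k_{\max}-k_4)}$ subintervals (Remark~\ref{rem:longer interval} in the paper), producing the prefactor $2^{5k_3-2k_4}$ in \eqref{eq:low frequency} rather than $2^{3k_3}$; without this the block estimates cannot even be applied to the input factors, and with it there is a genuine extra loss of $2^{2(k_{\max}-k_4)}$ that your bookkeeping must absorb. Relatedly, in these output-low cases the weight $\beta_{k_4,j_4}$ sits on the \emph{output} and is a further \emph{loss} (since the resonance forces $j_4$ up to $\sim 5k_{\max}$ while $k_4$ is small); its role in preventing the $H^2$ logarithmic divergence that you invoke belongs to the energy estimates (Remark~\ref{rem:weight effect} and Proposition~\ref{prop:energy}), not to this proposition, where the weight is negligible or adverse.

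Two smaller points of comparison. First, the paper does not rely exclusively on the block estimates: in the all-comparable-frequency cases and in the fully resonant modulation regime $2^{j_4}\sim |G|$ of Case III it switches to the $L^6$ Strichartz bound \eqref{eq:tri-hhh} derived from Lemma~\ref{lem:strichartz}. Your pure block-estimate route still closes for $s\ge 2$ (Corollary~\ref{cor:tri-L2}(a) gives a worse but still summable power of $2^{k_{\max}}$ there), so this is a legitimate alternative rather than an error, but you should verify the numerology in the resonant sub-case of Case III where both the subdivision loss and the output weight are active. Second, your identification of the ``main obstacle'' as the $(k_{\mathrm{thd}},j_{\max})$ sub-case is only part of the story; that case does require Lemma~\ref{lem:tri-L2}(b-1) (it appears in Case IV-b of the paper), but the configurations that drive the structure of the proof are the output-low ones just described.
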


\begin{proof}
We first consider the cubic term in $\mathcal N_3(u)$. From the support property \eqref{eq:tri-support property} in addition to \eqref{eq:tri-resonant function}, we know that
\begin{equation}\label{eq:high modulation}
\max(|\tau_j-w(\xi_j)|;j=1,2,3,4) \gtrsim |(\xi_1+\xi_2)(\xi_1+\xi_3)(\xi_2+\xi_3)|(\xi_1^2+\xi_2^2+\xi_3^2+\xi_4^2).
\end{equation}

\medskip

From the definition of $N_k$ norm, the left-hand side of  (cubic terms in) \eqref{eq:trilinear1.1} is bounded by
\begin{equation}\label{eq:hihihihi-1}
\begin{aligned}
\sup_{t_{k_4} \in \R} &\Big\|(\tau_4 - w(\xi_4)+ i2^{2k_4})^{-1}2^{3k_4}\mathbf{1}_{I_{k_4}}(\xi)\mathcal F \left[\eta_0\left(2^{2k_4-2}(t-t_{k_4})\right)P_{k_1}u\right] \\
&\hspace{3em}\ast \mathcal F \left[\eta_0\left(2^{2k_4-2}(t-t_{k_4})\right)P_{k_2}u\right] \ast \mathcal F \left[\eta_0\left(2^{2k_4-2}(t-t_{k_4})\right)P_{k_3}u\right]\Big\|_{X_{k_4}}
\end{aligned}
\end{equation}
We set $u_{k_i} = \mathcal F \left[\eta_0\left(2^{2k_4-2}(t-t_{k_4})\right)P_{k_i}u\right],$ $i=1,2,3$. We decompose each $u_{k_i}$ into $u_{k_i,j_i}(\tau,\xi) = u_{k_i}(\tau,\xi)\eta_{j_i}(\tau - w(\xi))$ with usual modification like $f_{\le j}(\tau) = f(\tau)\eta_{\le j}(\tau-\mu(n))$. Then, \eqref{eq:hihihihi-1} is bounded by
\[\sum_{j_4 \ge 0} \frac{2^{3k_4}2^{j_4/2}\beta_{k_4,j_4}}{\max(2^{j_4},2^{2k_4})} \sum_{j_1,j_2,j_3 \ge 2k_4}\norm{\mathbf{1}_{D_{k_4,j_4}}\cdot(u_{k_1,j_1} \ast u_{k_2,j_2} \ast u_{k_3,j_3})}_{L^2}.\]

\medskip

We, instead of Corollary \ref{cor:tri-L2}, use the following observation to control 
\[\norm{\mathbf{1}_{D_{k_4,j_4}}\cdot(u_{k_1,j_1} \ast u_{k_2,j_2} \ast u_{k_3,j_3})}_{L^2}\]
for particular case: Lemma \ref{lem:strichartz} yields
\begin{equation}\label{eq:L6}
\begin{aligned}
\norm{\mathcal F ^{-1}[u_{k_i,j_i}]}_{L^6} &= \left\|\int e^{it\tau}e^{ix\xi}e^{itw(\xi)}u_{k_i,j_i}^{\sharp}(\tau,\xi) \;d\xi d\tau \right\|_{L^6}\\
&\lesssim \int\left\| \int e^{ix\xi}e^{itw(\xi)}u_{k_i,j_i}^{\sharp}(\tau,\xi) \;d\xi  \right\|_{L^6} \; d\tau\\
&\lesssim 2^{-k_i/2}2^{j_i/2}\|u_{k_i,j_i}^{\sharp}\|_{L^2},
\end{aligned}
\end{equation}
where $u_{k_i,j_i}^{\sharp}(\tau, \xi) = u_{k_i,j_i}(\tau+w(\xi),\xi)$ with $\|u_{k_i,j_i}^{\sharp}\|_{L^2} = \|u_{k_i,j_i}\|_{L^2}$. With this, Plancherel's theorem and the H\"older inequality give 
\begin{equation}\label{eq:tri-hhh}
\norm{\mathbf{1}_{D_{k_4,j_4}}\cdot(u_{k_1,j_1} \ast u_{k_2,j_2} \ast u_{k_3,j_3})}_{L^2} \lesssim 2^{-(k_1+k_2+k_3)/2}2^{(j_1+j_2+j_3)/2}\prod_{i=1}^3 \norm{u_{k_i,j_i}}_{L^2}.
\end{equation}

\medskip

\textbf{Case I.} (high-high-high $\Rightarrow$ high). Let $k_4 \ge 20$ and $|k_1-k_4|, |k_2-k_4|, |k_3-k_4| \le 5$. Applying \eqref{eq:tri-hhh} to $\norm{\mathbf{1}_{D_{k_4,j_4}}\cdot(u_{k_1,j_1} \ast u_{k_2,j_2} \ast u_{k_3,j_3})}_{L^2}$, one has
\[\eqref{eq:hihihihi-1} \lesssim \sum_{j_4 \ge 0} \frac{2^{3k_4}2^{j_4/2}\beta_{k_4,j_4}}{\max(2^{j_4},2^{2k_4})} \sum_{j_1,j_2,j_3 \ge 2k_4}2^{-k_4/2}2^{(j_1+j_2+j_3)/2} \prod_{i=1}^3 \norm{u_{k_i,j_i}}_{L^2}.\]
Note that $\beta_{k_4,j_4} \sim 1$ when $0 \le j_4 \le 5k$. Let denote the summand by $\mathcal M_I$, i.e.,
\begin{equation}\label{summand1}
\mathcal M_I := \frac{2^{3k_4}2^{j_4/2}\beta_{k_4,j_4}}{\max(2^{j_4},2^{2k_4})}2^{-3k_4/2}2^{(j_1+j_2+j_3)/2}.
\end{equation}
Then, we know
\[\mathcal M_I \lesssim 2^{j_4/2}2^{-k_4/2}2^{(j_1+j_2+j_3)/2}, \quad \mbox{when } \;\; 0 \le j_4 \le 2k_4,\]
\[\mathcal M_I \lesssim 2^{-j_4/2}2^{3k_4/2}2^{(j_1+j_2+j_3)/2}, \quad \mbox{when } \;\; 2k_4 \le j_4 \le 5k_4\]
and
\[\mathcal M_I \lesssim 2^{-j_4/2}2^{(j_4-5k_4)/8}2^{3k_4/2}2^{(j_1+j_2+j_3)/2}, \quad \mbox{when } \;\; 5k_4 \le j_4.\]
Performing summations over $j_i$, $i=1,2,3,4$, we have
\[\eqref{eq:hihihihi-1} \lesssim 2^{k_4/2}\prod_{i=1}^3 \norm{P_{k_i}u}_{F_{k_i}}.\]
Indeed, we have from the definition of $X_k$-norm and (2.4) in \cite{Guo2012} that
\begin{align*}
\sum_{j_1 \ge 2k_4}2^{j_1/2}\norm{u_{k_1,j_1}}_{L^2} \lesssim&~{} \sum_{j_1 > 2k_4} 2^{j_1/2} \beta_{k_1,j_1} \Big\|\eta_{j_1}(\tau-\omega(\xi)) \int_{\R}|\bar{u}_{k_1}(\xi,\tau')|\cdot 2^{-2k_4}(1+2^{-2k_4}|\tau-\tau'|)^{-4}d\tau'\Big\|_{L^2}\\
&+2^{(2k_4)/2}\normo{\eta_{\leq 2k_4}(\tau-\omega(\xi)) \int_{\R} |\bar{u}_{k_1}(\xi,\tau')| 2^{-2k_4}(1+2^{-2k_4}|\tau-\tau'|)^{-4}d\tau'}_{L^2}\\
&\lesssim \norm{u_{k_1}}_{X_{k_1}}  \lesssim \norm{P_{k_1}u}_{F_{k_1}},
\end{align*}
where $\bar{u}_{k_1}=\mathcal F [P_{k_1}u \cdot \eta_0(2^{2k_1}(t-t_{k_1}))]$.

\medskip

\begin{rem}
As seen in the proof of \textbf{Case I} (also for other cases except for the case when the resulting frequency ($\xi_4$) is not the maximum frequency), the weight $\beta_{k_4,j_4}$ does not play any role in the estimates, hence is negligible. See \textbf{Case I} and \textbf{Case III} (below) for comparison.
\end{rem}

\medskip

\textbf{Case II.} (high-high-low $\Rightarrow$ high). Let $k_4 \ge 20$, $|k_2-k_4|, |k_3-k_4| \le 5$ and $k_1 \le k_4-10$\footnote{Thanks to the symmetry of frequencies, our assumption that $\xi_1$ is the minimum frequency does not lose of the generality.}. In this case, we have $j_{max} \ge 5k_4$ due to \eqref{eq:high modulation}. The exactly same argument used in \textbf{Case I} (but use Corollary \ref{cor:tri-L2} (a) instead of \eqref{eq:tri-hhh} to control $\norm{\mathbf{1}_{D_{k_4,j_4}}\cdot(u_{k_1,j_1} \ast u_{k_2,j_2} \ast u_{k_3,j_3})}_{L^2}$) gives better result as follows:
\[\eqref{eq:hihihihi-1} \lesssim 2^{k_1/2}\prod_{i=1}^3 \norm{u_{k_i}}_{X_{k_i}} \lesssim 2^{k_1/2}\prod_{i=1}^3 \norm{P_{k_i}u}_{F_{k_i}}.\]
The last inequality holds true, thanks to \eqref{eq:pXk3}, more precisely, 
\[\norm{u_{k_1}}_{X_{k_1}} = \norm{\mathcal F \left[\eta_0\left((2^{2k_4-2}(t-t_{k_4})\right) \cdot P_{k_1}u \cdot \eta_0\left(2^{2k_1}(t-t_{k_4})\right)\right]}_{X_{k_1}} \lesssim \norm{P_{k_1}u}_{F_{k_1}}.\] 
We omit the details.

\medskip

\textbf{Case III.} (high-high-high $\Rightarrow$ low). Let $k_3 \ge 20$, $|k_1-k_3|, |k_2-k_3| \le 5$ and $k_4 \le k_3-10$. 

\begin{rem}\label{rem:longer interval}
The trade-off of the use of the short time advantage (also, the use of the weight as in \eqref{beta weight}) is to worsen some interactions for which the resulting frequency is lower than (at least) one of others, in particular, \emph{high-high-high $\Rightarrow$ low} and \emph{high-high-low $\Rightarrow$ low} interaction components. More precisely, in the case of the \emph{high-high-high $\Rightarrow$ low}, the time interval of length $2^{-2k_4}$, on which the $N_{k_4}$-norm is taken, is longer than the interval of length $2^{-2k_i}$, on which $F_{k_i}$-norm is taken, $i=1,2,3$. In order to cover whole intervals of length $2^{-2k_4}$ in the estimates, one needs to divide the time interval of length $2^{-2k_4}$ into $2^{2k_3-2k_4}$ intervals of length $2^{-2^{2k_3}}$. Let choose $\gamma: \R \to [0,1]$ (a kind of the partition of unity), which is a smooth function supported in $[-1,1]$ with $ \sum_{m\in \Z} \gamma^3(x-m) \equiv 1$. Then, the left-hand side of  (cubic terms in) \eqref{eq:trilinear1.1} is bounded by (instead of \eqref{eq:hihihihi-1})
\begin{equation}\label{eq:low frequency}
\begin{split}
\sup_{t_k\in \R}&2^{3k_3}\Big\|(\tau_4-w(\xi_4) +i 2^{2k_4})^{-1}\mathbf{1}_{I_{k_4}}\cdot  \sum_{|m| \le C 2^{2k_3-2k_4}} \mathcal F [\eta_0(2^{2k_4}(t-t_k))\gamma (2^{2k_3}(t-t_k)-m)P_{k_1}u]\\
 &\hspace{3em}\ast \mathcal F [\eta_0(2^{2k_4}(t-t_k))\gamma (2^{2k_3}(t-t_k)-m)P_{k_2}u] \ast \mathcal F [\eta_0(2^{2k_4}(t-t_k))\gamma (2^{2k_3}(t-t_k)-m)P_{k_3}u]\Big\|_{X_{k_4}}.
\end{split}
\end{equation}
The analogous procedure will be applied to the estimate of \emph{high-high-low $\Rightarrow$ low} interaction component below.
\end{rem}

\medskip

\noindent When $k_4 = 0$, \eqref{eq:low frequency} is bounded by
\[\sum_{j_4 \ge 0} 2^{5k_3} \sum_{j_1,j_2,j_3 \ge 2k_3}\norm{\mathbf{1}_{D_{0,j_4}}\cdot(u_{k_1,j_1} \ast u_{k_2,j_2} \ast u_{k_3,j_3})}_{L^2}\]
due to \eqref{beta weight}.

When $0 \le j_4 \le 5k_3 - 5$ or $\le 5k_3 + 5 \le j_4$, we apply Corollary \ref{cor:tri-L2} (a) to $\norm{\mathbf{1}_{D_{k_4,j_4}}\cdot(u_{k_1,j_1} \ast u_{k_2,j_2} \ast u_{k_3,j_3})}_{L^2}$ to obtain
\[\eqref{eq:low frequency} \lesssim \left(\sum_{0 \le j_4 \le 5k_3 - 5} + \sum_{5k_3 + 5 \le j_4}\right)2^{5k_3} \sum_{j_1,j_2,j_3 \ge 2k_3}2^{(j_{min}+j_{thd})/2}2^{k_3/2} \prod_{i=1}^3 \norm{u_{k_i,j_i}}_{L^2}.\]
We know $j_4 \neq j_{max}$ in the former case, while $j_4 = j_{max}$ and $2^{j_{max}} \sim 2^{j_{med}} \gg |H|$ in the latter case. 

On the other hand, when $5k_3 - 5 \le j_4 \le 5k_3 + 5$ ($2^{j_4} = 2^{j_{max}} \sim |H| \gg 2^{j_{med}}$), we use \eqref{eq:tri-hhh}. Then, similarly as the previous cases, we have (when $k_4=0$)\footnote{One can see that the worst bound comes from the low frequency with high modulation case ($j_4 = j_{max} > j_{med} + 5$).}
\[\eqref{eq:low frequency} \lesssim 2^{\frac72k_3} \prod_{i=1}^{3}\norm{P_{k_i}u}_{F_{k_i}}.\]

\medskip

\noindent When $k_4 \neq 0$, similarly as above, \eqref{eq:low frequency} is bounded by
\[\sum_{j_4 \ge 0}  \frac{2^{5k_3-2k_4}2^{j_4/2}\beta_{k_4,j_4}}{\max(2^{j_4},2^{2k_4})} 2^{(k_3+k_4)/2}\sum_{j_1,j_2,j_3 \ge 2k_3}2^{(j_{min}+j_{thd})/2} \prod_{i=1}^3 \norm{u_{k_i,j_i}}_{L^2},\]
thanks to Corollary \ref{cor:tri-L2} (a), except for the case when $5k_3 - 5 \le j_4 \le 5k_3 + 5$. Let denote the summand by $\mathcal M_{III}$, similarly as in \eqref{summand1} i.e.,
\[\mathcal M_{III} := \frac{2^{5k_3-2k_4}2^{j_4/2}\beta_{k_4,j_4}}{\max(2^{j_4},2^{2k_4})} 2^{(k_3+k_4)/2}2^{(j_{min}+j_{thd})/2}.\]
If $2k_3 < 5k_4$, we know
\[\mathcal M_{III} \lesssim 2^{j_4}2^{2k_3}2^{-7k_4/2}2^{(j_1+j_2+j_3)/2}, \quad \mbox{when } \;\; 0 \le j_4 \le 2k_4,\]
\[\mathcal M_{III} \lesssim 2^{2k_3}2^{-3k_4/2}2^{(j_1+j_2+j_3)/2}, \quad \mbox{when } \;\; 2k_4 \le j_4 \le 2k_3,\]
\[\mathcal M_{III} \lesssim 2^{-j_4/2}2^{3k_3}2^{-3k_4/2}2^{(j_1+j_2+j_3)/2}, \quad \mbox{when } \;\; 2k_3 \le j_4 \le 5k_4,\]
\[\mathcal M_{III} \lesssim 2^{-j_4/2}2^{(j_4-5k_4)/8}2^{3k_3}2^{-3k_4/2}2^{(j_1+j_2+j_3)/2}, \quad \mbox{when } \;\; 5k_4 \le j_4 \le 5k_3-4\]
and
\[\mathcal M_{III} \lesssim 2^{-j_4/2}2^{(j_4-5k_4)/8}2^{3k_3}2^{-3k_4/2}2^{(j_1+j_2+j_3)/2}, \quad \mbox{when } \;\; 5k_3+4 \le j_4.\]
Otherwise (when $5k_4 < 2k_3$), the estimates of $\mathcal M_{III}$ on $2k_4 \le j_4 \le 5k_3-4$ are replaced by
\[\mathcal M_{III} \lesssim 2^{2k_3}2^{-3k_4/2}2^{(j_1+j_2+j_3)/2}, \quad \mbox{when } \;\; 2k_4 \le j_4 \le 5k_4,\]
\[\mathcal M_{III} \lesssim 2^{(j_4-5k_4)/8}2^{2k_3}2^{-3k_4/2}2^{(j_1+j_2+j_3)/2}, \quad \mbox{when } \;\; 5k_4 \le j_4 \le 2k_3\]
and
\[\mathcal M_{III} \lesssim 2^{-j_4/2}2^{(j_4-5k_4)/8}2^{3k_3}2^{-3k_4/2}2^{(j_1+j_2+j_3)/2}, \quad \mbox{when } \;\; 2k_3 \le j_4 \le 5k_3-4.\]
On the other hand, when $5k_3 - 5 \le j_4 \le 5k_3 + 5$, we use \eqref{eq:tri-hhh} to obtain
\[\eqref{hihihilo-2} \lesssim 2^{5k_3-2k_4}2^{-5k_3/2}2^{\frac58(k_3-k_4)}2^{-3k_3/2}\sum_{j_1,j_2,j_3 \ge 2k_3}\prod_{i=1}^3 2^{j_i}\norm{u_{k_i,j_i}}_{L^2}.\]
Summing over $j_i$, $i=1,2,3,4$, one has 
\begin{equation}\label{hihihilo-2}
\eqref{eq:low frequency} \lesssim C_1(k_3,k_4)\prod_{i=1}^3 \norm{P_{k_i}u}_{F_{k_i}},
\end{equation}
where
\begin{eqnarray*}
C_1(k_3,k_4) =  \left \{ \begin{array}{lr}
2^{\frac75k_3}, &2k_3 < 5k_4,\\
2^{\frac94k_3}2^{-\frac{17}{8}k_4}, &5k_4 \le 2k_3.
\end{array}
\right.
\end{eqnarray*}

\begin{rem}\label{rem:total derivative}
A direct computation in the cubic term in $\mathcal N_3(u)$, one has  
\[40uu_x u_{xx} + 10 u^2u_{xxx} + 10 u_x^3 =  10 (u^2u_{xx})_x + 10 (uu_x^2)_x.\]
Then, one can reduce $2^{3k_3}$ in \eqref{eq:low frequency} by $2^{2k_3+k_4}$, and hence obtain a better result. However, our regularity threshold is $s=2$, and hence we, here, do not explore the trilinear estimates in lower regularity. 
\end{rem}

\textbf{Case IV.} (high-low-low $\Rightarrow$ high). Let $k_4 \ge 20$, $|k_3-k_4| \le 5$ and $k_1, k_2 \le k_4 -10$. Without loss of generality, we may assume that $k_1 \le k_2$, thanks to the symmetry. Similarly as the \textbf{Case I}, it is enough to consider
\begin{equation}\label{hilolohi}
\sum_{j_4 \ge 0} \frac{2^{3k_4}2^{j_4/2}\beta_{k_4,j_4}}{\max(2^{j_4},2^{2k_4})} \sum_{j_1,j_2,j_3 \ge 2k_4}\norm{\mathbf{1}_{D_{k_4,j_4}}\cdot(u_{k_1,j_1} \ast u_{k_2,j_2} \ast u_{k_3,j_3})}_{L^2}.
\end{equation}
Let denote the summand in \eqref{hilolohi} by $\mathcal M_{IV}$, i.e.,
\[\mathcal M_{IV} :=  \frac{2^{3k_4}2^{j_4/2}\beta_{k_4,j_4}}{\max(2^{j_4},2^{2k_4})} \norm{\mathbf{1}_{D_{k_4,j_4}}\cdot(u_{k_1,j_1} \ast u_{k_2,j_2} \ast u_{k_3,j_3})}_{L^2}.\]
We further split this case into three cases: \textbf{Case IV-a} $k_2=0$, \textbf{Case IV-b}  $k_1= 0$ and $k_2 \neq 0 $, and  \textbf{Case IV-c}  $k_1 \neq 0$. 

\medskip

\textbf{Case IV-a.} $k_2=0$. We do not distinguish Corollary \eqref{cor:tri-L2} (b.1) and (b.2), since $2^{k_{min}} \le 2^{k_{thd}} \le 1$. Then, from Corollary \ref{cor:tri-L2} (b), we have\footnote{We use, here, $2^{j_{max}} \ge 2^{2k_4}$ to deal with a maximum modulation, since our purpose is to obtain the local well-posedness only in $H^s(\R)$, $s \ge 2$. However, one may obtain the better result by performing a delicate calculation in addition to $2^{j_{max}} \ge |H|$, instead of $2^{j_{max}} \ge 2^{2k_4}$. For the same reason, so the \emph{high-high-low $\Rightarrow$ low} case below as well.}
\[\mathcal M_{IV} \lesssim 2^{j_4}2^{-2k_4}\prod_{i=1}^3 2^{j_i/2}\norm{u_{k_i,j_i}}_{L^2},\quad \mbox{when } \;\; 0 \le j_4 \le 2k_4\]
and
\[\mathcal M_{IV} \lesssim 2^{-j_4/2}\beta_{k_4,j_4}\prod_{i=1}^3 2^{j_i/2}\norm{u_{k_i,j_i}}_{L^2},\quad \mbox{when } \;\;  2k_4 \le j_4.\]
Summing over $j_i$, $i=1,2,3,4$, one has
\[\eqref{eq:hihihihi-1} \lesssim \prod_{i=1}^3 \norm{u_{k_i}}_{F_{k_i}}.\]

\medskip

\textbf{Case IV-b.}  $k_1= 0$ and $k_2 \neq 0 $. Note that we have $j_{max} \ge 4k_4 + k_2$ due to \eqref{eq:high modulation}. 
We use Corollary \ref{cor:tri-L2} (b.1) (the worst case occurring in $j_2 = j_{max}$) when $0 \le j_4 \le 4k_4 + k_2 -5$, and (b.2) ($j_2 = j_{max}$ never happens) when $4k_4 + k_2 -5 \le j_4$ to control $\norm{\mathbf{1}_{D_{k_4,j_4}}\cdot(u_{k_1,j_1} \ast u_{k_2,j_2} \ast u_{k_3,j_3})}_{L^2}$ in $\mathcal M_{IV}$, then we have 
\[\mathcal M_{IV} \lesssim 2^{j_4}2^{-3k_4}\prod_{i=1}^3 2^{j_i/2}\norm{u_{k_i,j_i}}_{L^2},\quad \mbox{when } \;\; 0 \le j_4 \le 2k_4,\]
\[\mathcal M_{IV} \lesssim 2^{-k_4}\prod_{i=1}^3 2^{j_i/2}\norm{u_{k_i,j_i}}_{L^2},\quad \mbox{when } \;\; 2k_4 \le j_4 \le 4k_4+k_2-5\]
and
\[\mathcal M_{IV} \lesssim 2^{-j_4/2}\beta_{k_4,j_4}2^{k_4}\prod_{i=1}^3 2^{j_i/2}\norm{u_{k_i,j_i}}_{L^2},\quad \mbox{when } \;\;  4k_4+k_2-5 \le j_4.\]
Summing over $j_i$, $i=1,2,3,4$, one has
\[\eqref{eq:hihihihi-1} \lesssim k_42^{-k_4}\prod_{i=1}^3 \norm{u_{k_i}}_{F_{k_i}}.\]

\medskip
 
\textbf{Case IV-c.}  $k_1 \neq 0$. Similarly as \textbf{Case IV-a} (if $|k_1-k_2| < 5$ with Corollary \ref{cor:tri-L2} (b.2)) or \textbf{Case IV-b} (if $k_1 < k_2 -5$), we have at most 
\[\eqref{eq:hihihihi-1} \lesssim 2^{k_1/2}\prod_{i=1}^3 \norm{u_{k_i}}_{F_{k_i}}.\]

\medskip

\textbf{Case V.} (high-high-low $\Rightarrow$ low). Let $k_3 \ge 20$, $|k_2-k_3| \le 5$ and $k_1,k_4 \le k_3 -10$. We first divide this case into two cases: \textbf{Case V-a} $k_4 = 0$ and \textbf{Case V-b} $k_4 \neq 0$.

\medskip

\textbf{Case V-a.} $k_4 = 0$. From Remark \ref{rem:longer interval}, it suffices to consider
\begin{equation}\label{hihilolo-1}
\sum_{j_4 \ge 0} 2^{5k_3} \sum_{j_1,j_2,j_3 \ge 2k_3}\norm{\mathbf{1}_{D_{0,j_4}}\cdot(u_{k_1,j_1} \ast u_{k_2,j_2} \ast u_{k_3,j_3})}_{L^2}
\end{equation}
due to \eqref{beta weight}. If $k_1 = 0$, by using Corollary \ref{cor:tri-L2} (a), we have
\begin{equation}\label{hihilolo-2}
\eqref{hihilolo-1} \lesssim 2^{4k_3}\prod_{i=1}^3 \norm{u_{k_i}}_{F_{k_i}}.
\end{equation}
More precisely, when $|\xi_2+\xi_3| \ll \xi_3^{-2}$ (equivalently $|H| \ll 2^{2k_3}$) we know 
\[2^{(j_{min}+j_{thd})/2} \le 2^{(j_1+j_2+j_3+j_4)/2}2^{-2k_3}, \quad \mbox{when } \;\; 0 \le j_4 \le 2k_3\]
and
\[2^{(j_{min}+j_{thd})/2} \le 2^{(j_1+j_2+j_3)/2}2^{-j_4/2}, \quad \mbox{when } \;\; 2k_3 \le j_4.\]
When $|\xi_2+\xi_3| \gtrsim \xi_3^{-2}$ (equivalently $|H| \gtrsim 2^{2k_3}$), we know
\[2^{(j_{min}+j_{thd})/2} \le 2^{(j_1+j_2+j_3+j_4)/2}2^{-2k_3}, \quad \mbox{when } \;\; 1 \le 2^{j_4} \le 2^{2k_3}\]
\[2^{(j_{min}+j_{thd})/2} \le 2^{(j_1+j_2+j_3)/2}2^{-j_4/2}, \quad \mbox{when } \;\; 2^{2k_3} \le 2^{j_4} \le |H|/2\]
\[2^{(j_{min}+j_{thd})/2} \le 2^{(j_1+j_2+j_3)/2}2^{-k_3}, \quad \mbox{when } \;\; |H|/2 \le 2^{j_4} \le 3|H|/2\]
and
\[2^{(j_{min}+j_{thd})/2} \le 2^{(j_1+j_2+j_3)/2}2^{-j_4/2}, \quad \mbox{when } \;\; 3|H|/2 \le 2^{j_4}.\]
Note that the number of $j_4$ is finite ($\le 10$) when $|H|/2 \le 2^{j_4} \le 3|H|/2$. Thus, the summation over $j_i$, $i=1,2,3,4$, yields \eqref{hihilolo-2}. 

Otherwise ($k_1 \neq 0$), similarly as \textbf{Case IV-b}, we have
\[\eqref{hihilolo-1} \lesssim 2^{3k_3}2^{k_1/2}\prod_{i=1}^3 \norm{u_{k_i}}_{F_{k_i}}.\]

\medskip

\textbf{Case V-b.} $k_4 \neq 0$. Similarly, it is enough to consider
\begin{equation}\label{hihilolo-4}
\sum_{j_4 \ge 0}  \frac{2^{5k_3-2k_4}2^{j_4/2}\beta_{k_4,j_4}}{\max(2^{j_4},2^{2k_4})} \sum_{j_1,j_2,j_3 \ge 2k_3}\norm{\mathbf{1}_{D_{k_4,j_4}}\cdot(u_{k_1,j_1} \ast u_{k_2,j_2} \ast u_{k_3,j_3})}_{L^2}.
\end{equation}
Let denote the summand in \eqref{hihilolo-4} by $\mathcal M_{V}$, i.e.,
\[\mathcal M_{V} :=  \frac{2^{5k_3-2k_4}2^{j_4/2}\beta_{k_4,j_4}}{\max(2^{j_4},2^{2k_4})}\norm{\mathbf{1}_{D_{k_4,j_4}}\cdot(u_{k_1,j_1} \ast u_{k_2,j_2} \ast u_{k_3,j_3})}_{L^2}.\]
If $k_1 = 0$, we use Corollary \ref{cor:tri-L2} (b) to control $\norm{\mathbf{1}_{D_{k_4,j_4}}\cdot(u_{k_1,j_1} \ast u_{k_2,j_2} \ast u_{k_3,j_3})}_{L^2}$. Then, we know
\[\mathcal M_{V} \lesssim 2^{j_4}2^{k_3}2^{-9k_4/2}2^{(j_1+j_2+j_3)/2}, \quad \mbox{when } \;\; 0 \le j_4 \le 2k_4,\]
\[\mathcal M_{V} \lesssim 2^{k_3}2^{-5k_4/2}2^{(j_1+j_2+j_3)/2}, \quad \mbox{when } \;\; 2k_4 \le j_4 \le 5k_4,\]
\[\mathcal M_{V} \lesssim 2^{3k_3/2}2^{-3k_4}2^{(j_1+j_2+j_3)/2}, \quad \mbox{when } \;\; 5k_4 \le j_4 \le 4k_3 + k_4 -5\]
and
\[\mathcal M_{V} \lesssim 2^{-j_4/2}2^{(j_4-5k_4)/8}2^{3k_3}2^{-3k_4/2}2^{(j_1+j_2+j_3)/2}, \quad \mbox{when } \;\; 4k_3 + k_4 - 5 \le j_4.\]
Summing over $j_i$, $i=1,2,3,4$, one has
\[\eqref{hihilolo-1} \lesssim \max \left(k_32^{3k_3/2}2^{-3k_4}, 2^{3k_3/2}2^{-2k_4} \right)\prod_{i=1}^3 \norm{u_{k_i}}_{F_{k_i}}.\]

Otherwise ($k_1 \neq 0$), analogous arguments as \textbf{Case V-b} (for $|k_1-k_4| \ge 5$ case) and \textbf{Case V-a}, in particular $k_1=0$ case, (for $|k_1-k_4| \le 5$ case) can be applied, and hence we have (but, omit the details)
\[\eqref{hihilolo-1} \lesssim C_2(k_1,k_3,k_4)\prod_{i=1}^3 \norm{u_{k_i}}_{F_{k_i}},\]
where
\begin{eqnarray*}
C_2(k_1,k_3, k_4) =  \left \{ \begin{array}{lr}
2^{3(k_3-k_4)/2}\max\left(k_32^{-k_4}, 1 \right), &k_1 \le k_4 -5,\\
2^{3(k_3-k_4)/2}\max\left(k_32^{-k_4/2}2^{k_1/8}, 2^{-k_4}2^{(k_1-k_4)/8}\right), &k_4 \le k_1 -5,\\
2^{4k_3}2^{k_1},& |k_1-k_4| \le 5
\end{array}
\right.
\end{eqnarray*}

\medskip

The estimate of \emph{low-low-low $\Rightarrow$ low} interaction component can be easily obtained, and hence we omit the details. On the other hand, the estimate of quintic term in $\mathcal N_3(u)$ will be taken into account in the estimate of $\mathcal{SN}(u)$ below. Thus, by collecting all, we complete the proof. 
\end{proof}

\begin{prop}[Nonlinear estimates for $\mathcal{SN}(u)$]\label{prop:multilinear}
(a) If $s \ge 2$, $T\in(0,1]$, and $u,v\in F^s(T)$ then 
\[\|\mathcal{SN}(u)\|_{N^s(T)} \lesssim \|u\|_{F^{s}(T)}^2 + \|u\|_{F^{s}(T)}^4.\]
and
\[\|\mathcal{SN}(u) -\mathcal{SN}(v)\|_{N^s(T)} \lesssim \left(\|u\|_{F^{s}(T)} + \|v\|_{F^{s}}(T) + \|u\|_{F^{s}(T)}^3 + \|v\|_{F^{s}(T)}^3 \right)\|u-v\|_{F^s(T)}.\]

(b) If $T \in (0,1]$, $u, v \in F^0(T) \cap F^{2}(T)$, then
\[\|\mathcal{SN}(u) - \mathcal{SN}(v)\|_{N^0(T)}\lesssim \left(\|u\|_{F^{2}(T)} + \|v\|_{F^{2}}(T) + \|u\|_{F^{2}(T)}^3 + \|v\|_{F^{2}(T)}^3 \right)\|u-v\|_{F^{0}(T)}.\]
\end{prop}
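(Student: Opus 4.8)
The plan is to treat $\mathcal{SN}(u)=120\mu^3\lambda^3 u u_x + 120\mu\lambda u^3 u_x$ exactly as in the proof of Proposition \ref{prop:trilinear}, but with a decisive simplification: every term in $\mathcal{SN}(u)$ carries only \emph{one} total derivative, and can moreover be written in the conservative form $uu_x = \frac12\partial_x(u^2)$, $u^3u_x = \frac14\partial_x(u^4)$. Thus, after passing to the $N_k$ norm and performing the usual time-localization, the resulting $L^2$-block sums look like the ones in Proposition \ref{prop:trilinear} with $2^{3k_{\max}}$ (three derivatives) replaced by $2^{k_{\max}}$ (one derivative), so every single case gains a factor of $2^{-2k_{\max}}$ relative to the $\mathcal N_3(u)$ analysis. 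First I would observe that the quadratic piece $\mu^3\lambda^3 uu_x=\frac12\mu^3\lambda^3\partial_x(u^2)$ is weaker than the quadratic term in $\mathcal N_2(u)$ — which was already handled in Proposition \ref{prop:bilinear} — because $\mathcal N_2$ contains terms with \emph{three} derivatives ($uu_{xxx}$) whereas here there is only one; hence its $N^s(T)$ bound follows a fortiori, giving the $\|u\|_{F^s(T)}^2$ contribution. Then the quartic piece $\mu\lambda u^3u_x=\frac14\mu\lambda\partial_x(u^4)$ is a five-linear interaction; I would estimate it by repeating the case analysis of Proposition \ref{prop:trilinear} one level up (using Corollary \ref{cor:tri-L2} to bound the innermost triple convolution and then Cauchy–Schwarz/Young on the two extra low-or-comparable frequency factors), which yields the $\|u\|_{F^s(T)}^4$ contribution.

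The order of steps would be: (i) reduce, via the scaling/localization machinery of Section \ref{sec:WP} and Lemma \ref{lem:propXk}, the estimate of each monomial in $\mathcal{SN}(u)$ to a dyadically-localized $L^2$-block sum, exactly as in \eqref{eq:hihihihi-1}; (ii) for the quadratic term, invoke the conservative form and compare directly with the already-proven bound for the quadratic part of $\mathcal N_2(u)$ in Proposition \ref{prop:bilinear}(a); (iii) for the quartic term, run the frequency-interaction case split — high$^4\to$high, high$^3$-low$\to$high, high$^4\to$low, high$^2$-low$^2\to$high, and the low-frequency-output cases requiring the finer time subdivision of Remark \ref{rem:longer interval} — using Corollary \ref{cor:tri-L2} on three of the five inputs and $\ell^2$-summing the remaining two via $\sum_k\|P_ku\|_{F_k}\lesssim\|u\|_{F^s(T)}$ for $s$ large enough (here $s\ge 2$ is far more than needed, since the output frequency is always at most comparable to $2^{k_{\max}}$ and only one derivative falls on the product); (iv) assemble the dyadic pieces with the Littlewood–Paley square-sum to pass from $F_k,N_k$ to $F^s(T),N^s(T)$. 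For part (b), the same estimates are rerun at $s=0$ with the worst-behaving high frequency factors measured in $F^2(T)$ and the difference factor in $F^0(T)$; the algebraic identity $u^3u_x-v^3v_x=\frac14\partial_x(u^4-v^4)$ and the telescoping $u^4-v^4=(u-v)(u^3+u^2v+uv^2+v^3)$ (together with $uu_x-vv_x=\frac12\partial_x((u-v)(u+v))$) distribute exactly one difference factor and up to three fixed factors, matching the claimed right-hand sides.

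The main obstacle I anticipate is the \emph{high-frequency-concentrated-in-the-output} cases for the quartic term (the analogues of Case III and Case V in Proposition \ref{prop:trilinear}): there the $N_{k_{\min}}$ norm lives on a coarser time interval than the $F_{k}$ norms of the high-frequency inputs, so one must chop the interval of length $2^{-2k_{\min}}$ into $\sim 2^{2k_{\max}-2k_{\min}}$ pieces à la Remark \ref{rem:longer interval}, which injects a power $2^{2k_{\max}}$ that must then be beaten by the resonance gain \eqref{eq:high modulation} (recall $|G|\gtrsim |(\xi_1+\xi_2)(\xi_1+\xi_3)(\xi_2+\xi_3)|(\xi_1^2+\cdots+\xi_4^2)$) after accounting for the $\beta_{k,j}$ weight. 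Because $\mathcal{SN}(u)$ carries two fewer derivatives than $\mathcal N_3(u)$, this balance is comfortably in our favor — one ends with a net negative power of $2^{k_{\max}}$ and absolute convergence of the dyadic sum, with plenty of room even at $s=0$ — but tracking the $j_4$-region splits (dictated by whether $2^{j_4}$ is below, comparable to, or above $|G|$, and whether $2k_{\max}\lessgtr 5k_{\min}$ in the weight) is the genuinely tedious part, which is why, following the paper's style, we would state the resulting $C(k_i)$ bounds and omit the routine geometric summation. The remaining cases (output frequency comparable to the maximum) are strictly easier and mirror Cases I, II, IV verbatim with the extra $2^{-2k_{\max}}$ gain.
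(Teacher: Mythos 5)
Your proposal is correct and follows essentially the same route as the paper: exploit that each monomial of $\mathcal{SN}(u)$ carries a single total derivative (so the conservative form $uu_x=\tfrac12\partial_x(u^2)$, $u^3u_x=\tfrac14\partial_x(u^4)$ places the derivative on the output frequency), and then rerun the much easier analogue of the case analysis from Proposition \ref{prop:trilinear}. The only cosmetic difference is that the paper dispatches all cases with one generic $\ell$-linear $L^2$-block bound obtained by Cauchy--Schwarz, whereas you propose combining Corollary \ref{cor:tri-L2} with Cauchy--Schwarz on the extra factors; both work, and your telescoping treatment of the differences in parts (a)--(b) matches the intended argument.
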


\begin{proof}
The quadratic term in $\mathcal{SN}(u)$ can be easily treated compared to one in $\mathcal{N}_2(u)$, due to a less number of derivatives, similarly as the cubic term in $\mathcal{N}_2(u)$ compared to one in $\mathcal{N}_3(u)$. The rest of the proof (also for the quadratic and cubic terms in $\mathcal{SN}(u)$ and $\mathcal{N}_3(u)$, respectively) is based on the following direct computation
\[\norm{\mathbf{1}_{D_{k_{\ell},j_{\ell}}}\cdot(u_{k_1,j_1} \ast \cdots \ast u_{k_{\ell-1},j_{\ell-1}})}_{L^2} \lesssim 2^{-(k_{max}+k_{med})/2}2^{-(j_{max}+j_{med})/2}2^{k_{\ell}/2}2^{k_{\ell}/2}\prod_{i=1}^{\ell-1}2^{j_i/2}2^{k_i/2}\norm{u_{k_i,j_i}}_{L^2},\]
which can be obtained by Cauchy-Schwarz inequality. Due to a less number of derivatives (indeed, one (total) derivative) in $\mathcal{SN}(u)$, the analogous (but much simpler) argument used in the proof of Proposition \ref{prop:trilinear} immediately yields Proposition \ref{prop:multilinear}. In particular, the total derivative form enables us to drop one derivative taken in a high frequency mode, see Remark \ref{rem:total derivative}. We omit the details.
\end{proof}

\subsection{Energy estimates}
Assume that $u, \mathcal{G} \in C([-T,T];L^2)$ satisfy
\[\begin{cases}
u_t + u_{5x} = \mathcal{G}, \quad  (x,t) \in \R \times (-T,T) \\
u(0,x) =u_0(x)
\end{cases}\]
A direct calculation gives
\begin{equation}\label{eq:energy 1}
\sup_{|t_k| \le T}\|u(t_k)\|_{L^2}^2 \le \|u_0\|_{L^2}^2 + \sup_{|t_k| \le T}\left|\int_{\R \times [0,t_k]}u \cdot \mathcal{G} \; dxdt \right|.
\end{equation}
To control the second term (for $\mathcal{N}_2(u)$, $\mathcal{N}_3(u)$ and $\mathcal{SN}(u)$) of the right-hand side of \eqref{eq:energy 1}, we need following lemmas.
\begin{lem}[\cite{GKK2013}]\label{lem:energy-quadratic}
Let $T \in (0,1]$ and $k_1,k_2,k_3 \in \Z_+$.

(a) Assume $k_1\leq k_2\leq k_3$ and $|k_{3}-k_{1}| \le 5$, $u_i \in F_{k_i}(T), i=1,2,3$. Then
\[ \left| \int_{\R\times [0,T]} u_1u_2u_3 \; dxdt \right| \lesssim 2^{-\frac74k_{3}} \prod_{i=1}^{3} \norm{u_i}_{F_{k_i}(T)}.\]
 
(b) Assume $k_1\leq k_2\leq k_3$ and $k_{3} \ge 10$, $2^{k_{1}} \ll 2^{k_{2}} \sim 2^{k_{3}}$ and $u_i \in F_{k_i}(T), i=1,2,3$.

If $k_{1} \ge 1$, then
\[\left| \int_{\R\times [0,T]} u_1u_2u_3 \; dxdt \right| \lesssim 2^{-2k_{3}-\frac12k_{1}}\prod_{i=1}^{3} \norm{u_i}_{F_{k_i}(T)}.\]

If $k_{1}=0$, then
\[\begin{aligned}
\left| \int_{\R\times [0,T]} u_1u_2u_3 \; dxdt \right| \lesssim&~{} 2^{-k_{3}}\prod_{i=1}^{3} \norm{u_i}_{F_{k_i}(T)},\\
\left| \int_{\R\times [0,T]} (\partial_xu_1)u_2u_3 \; dxdt \right| \lesssim&~{} 2^{-2k_{3}}\prod_{i=1}^{3} \norm{u_i}_{F_{k_i}(T)}.
\end{aligned}\]

(c) Assume $k_1 \le k -10 $. Then
\[\left| \int_{\R\times[0,T]} P_k(u)P_k(\px^3u\cdot P_{k_1}v) \; dxdt    \right| \lesssim 2^{\frac12k_1} \norm{P_{k_1}v}_{F_{k_1}(T)}\sum_{|k'-k|\le 10} \norm{P_{k'}u}^2_{F_{k'}(T)}.\]

(d) Under the same condition as in (c), we have
\[\left| \int_{\R\times[0,T]} P_k(u)P_k(\px^2u\cdot P_{k_1}\px v) \; dxdt    \right| \lesssim 2^{\frac12k_1} \norm{P_{k_1}v}_{F_{k_1}(T)}\sum_{|k'-k|\le 10} \norm{P_{k'}u}^2_{F_{k'}(T)}.\]
\end{lem}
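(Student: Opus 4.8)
The plan is to prove all four estimates by the standard machinery for short‑time energy inequalities: localize time at the finest frequency scale, pass to the Fourier side to recognize each integral as a $J_2$‑type form, and then invoke the $L^2$‑block estimates of Lemma \ref{lem:block estimate} together with the quadratic resonance identity.

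\textbf{Step 1 (time localization).} Since $T\le 1$ and the largest frequency present is $2^{k_3}$ (resp.\ $2^{k}$ in (c)--(d)), I would insert a smooth partition of unity at time scale $2^{-2k_3}$: fixing $\gamma\in C_c^\infty([-1,1])$ with $\sum_m\gamma^3(\cdot-m)\equiv 1$, write $\int_{\R\times[0,T]}u_1u_2u_3\,dxdt$ as a sum over $m$ of integrals of $\gamma^3(2^{2k_3}t-m)\mathbf 1_{[0,T]}(t)u_1u_2u_3$, only $O(1+2^{2k_3}T)\lesssim 2^{2k_3}$ of which are nonzero. On each piece the factor $\gamma(2^{2k_3}t-m)u_i$ is supported in a time window of length $2^{-2k_3}\le 2^{-2k_i}$, hence by the definition of $F_{k_i}(T)$ and the multiplier bound \eqref{eq:pXk3} of Lemma \ref{lem:propXk}, its space–time Fourier transform $f_i^m$ obeys $\|f_i^m\|_{X_{k_i}}\lesssim\|u_i\|_{F_{k_i}(T)}$.

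\textbf{Step 2 (Fourier side and block estimates).} By Plancherel each localized integral equals, up to an absolute constant, $J_2(f_1^m,f_2^m,f_3^m)$ with the quadratic resonance $H$ appearing as in the definition preceding Lemma \ref{lem:block estimate}; in particular the form vanishes unless $2^{k_{max}}\sim2^{k_{med}}$ and $2^{j_{max}}\sim\max(2^{j_{med}},|H|)$. I then split each $f_i^m$ into modulation pieces $f_{k_i,j_i}^m(\tau,\xi)=f_i^m(\tau,\xi)\eta_{j_i}(\tau-w(\xi))$. For (a), where all frequencies are comparable, I apply Lemma \ref{lem:block estimate}(a); for (b), (c), (d), where the minimum frequency is strictly separated, I apply Lemma \ref{lem:block estimate}(b) with the index $i$ chosen to be the smallest frequency, which supplies the extra gain $2^{-(k_i+j_i)/2}$. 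Summing the resulting bounds over $j_1,j_2,j_3$ against the weights $2^{j_i/2}\beta_{k_i,j_i}$ defining the $X_{k_i}$ norms, and using $2^{j_{max}}\gtrsim|H|$ to trade each lost derivative for a power of $2^{-k_3}$ (resp.\ $2^{-k}$), produces the $2^{-\frac74 k_3}$ bound in (a), the $2^{-2k_3-\frac12k_1}$/$2^{-k_3}$/$2^{-2k_3}$ bounds in (b), and the $2^{k_1/2}$ bound in (c)--(d); in (c)--(d) one first rewrites $c_1\px u\,\px^2 u+c_2 u\,\px^3 u$ in the compact conservative form $\partial_x(\cdots)$, so that one of the three derivatives is transferred onto the (low‑modulation) output factor. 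Finally I re‑sum the $O(2^{2k_3})$ time pieces — two factors controlled by $\sup_m\|f_i^m\|_{X_{k_i}}$ and the third by an almost‑orthogonal square‑sum in $m$ — and absorb the interval count into the surplus decay gained from the resonance.

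\textbf{Main obstacle.} The genuinely delicate point is parts (c) and (d): the three derivatives on $\px^3 u$ — the high–low coherence singled out in Remark \ref{rem:failure} — are only partially absorbed by the block/dispersive gain, and the remaining deficit is made up precisely by the high‑modulation weight $\beta_{k,j}=1+2^{(j-5k)/8}$ built into $X_k$, which acts on the low‑frequency factor $v$ exactly when $v$ carries the largest modulation $2^{j}\sim|H|\sim2^{4k+k_1}$. Correctly tracking which factor carries $j_{max}$, invoking $\beta_{k,j}$ only in the genuinely resonant sub‑case (handling the non‑resonant sub‑cases with the unweighted block estimates), and carrying out the modulation sums without a logarithmic loss at the threshold $s=2$, is where the real work concentrates; the bookkeeping of the $O(2^{2k_3})$ short‑time pieces in Steps 1–2 is the secondary technical nuisance.
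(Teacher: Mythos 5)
Your outline is the standard short-time energy argument of \cite{GKK2013} — which is exactly what the paper relies on here (it quotes the lemma from that reference without reproving it), and it is the same machinery the paper does write out for the quadrilinear analogue, Lemma \ref{lem:energy1-1}: partition the time interval at scale $2^{-2k_3}$, pass to $J_2$, apply Lemma \ref{lem:block estimate}, and sum modulations using that \eqref{eq:pre2} effectively forces $j_i\ge 2k_3$ after localization. Two points in your sketch would need correcting to close the argument. First, in part (b) the free index $i$ in Lemma \ref{lem:block estimate}(b) must be taken to be the factor carrying the \emph{maximal} modulation (so that the leftover $2^{-j_i/2}\lesssim |H|^{-1/2}\sim 2^{-(4k_3+k_1)/2}$), not the smallest frequency: with $i=1$ and $j_1$ only $\sim 2k_3$ one lands at $2^{-k_3/2-k_1/2}$ after the $2^{2k_3}$ interval count, far short of $2^{-2k_3-\frac12 k_1}$; and in the sub-case where the low-frequency factor does carry $j_{\max}$, the weight $\beta_{k_1,j_{\max}}\gtrsim 2^{(k_3-k_1)/2}$ is what restores the bound — so the weight is already essential in (b), not only in (c)--(d). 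Second, for (c)--(d) the operative mechanism is not the conservative rewriting of the nonlinearity but integration by parts exploiting that \emph{two of the three factors are the same function} $P_ku$ (so $\px P_ku\cdot\px^2P_ku=\tfrac12\px((\px P_ku)^2)$, etc.), which leaves at most two derivatives on the high-frequency pair and at least one on $P_{k_1}v$, together with a commutator bound for $[P_k,P_{k_1}v]\px^3u$; combined with (b) this yields exactly the $2^{k_1/2}$ loss. With those adjustments the proof goes through as you describe.
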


\begin{lem}\label{lem:energy1-1}
Let $T \in (0,1]$, $k_1,k_2,k_3,k_4 \in \Z_+$, and $u_i \in F_{k_i}(T)$, $i=1,2,3,4$. We further assume $k_1 \le k_2 \le k_3 \le k_4$ with $k_4 \ge 10$. Then

(a) For $|k_1 - k_4| \le 5 $, we have
\begin{equation}\label{eq:energy1-1.1}
\left| \int_{[0,T] \times \R}  u_1u_2u_3u_4 \;dx dt\right| \lesssim 2^{-k_4/2}\prod_{i=1}^{4}\norm{u_i}_{F_{k_i}(T)}.
\end{equation}

(b) For $|k_2 - k_4| \le 5 $ and $k_1 \le k_4 - 10$, we have
\begin{equation}\label{eq:energy1-1.2}
\left| \int_{[0,T] \times \R}  u_1u_2u_3u_4 \;dx dt\right| \lesssim 2^{-k_4}2^{k_1/2}\prod_{i=1}^{4}\norm{u_i}_{F_{k_i}(T)}.
\end{equation}

(c) Let $|k_3 - k_4| \le 5$ and $k_2 \le k_4 -10$. 

In general, we have
\begin{equation}\label{eq:energy1-1.3}
\left| \int_{[0,T] \times \R}  u_1u_2u_3u_4 \;dx dt\right| \lesssim 2^{-k_4}2^{k_1/2}\prod_{i=1}^{4}\norm{u_i}_{F_{k_i}(T)}.
\end{equation}

In particular, if $k_1 =0$ and $k_2 \ge 1$, we have
\begin{equation}\label{eq:energy1-1.3c}
\left| \int_{[0,T] \times \R}  u_1u_2u_3u_4 \;dx dt\right| \lesssim 2^{-2k_4}\prod_{i=1}^{4}\norm{u_i}_{F_{k_i}(T)}.
\end{equation}

If $0 < k' \le k_4 -10$, we have
\begin{equation}\label{eq:energy1-1.3a}
\left| \int_{[0,T] \times \R}  P_{k'}(u_1u_2)u_3u_4 \;dx dt\right| \lesssim 2^{-2k_4}2^{k_1/2}\prod_{i=1}^{4}\norm{u_i}_{F_{k_i}(T)}.
\end{equation}

If $k' = 0$, we have
\begin{equation}\label{eq:energy1-1.3b}
\left| \int_{[0,T] \times \R}  P_0(\partial_x(u_1u_2))u_3u_4 \;dx dt\right| \lesssim 2^{-2k_4}2^{k_1/2}\prod_{i=1}^{4}\norm{u_i}_{F_{k_i}(T)}.
\end{equation}
\end{lem}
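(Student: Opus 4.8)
The plan is to reduce all four estimates to a single trilinear $L^2$-block mechanism. First I would extend each $u_i$ to $\widetilde u_i\in F_{k_i}$ with $\norm{\widetilde u_i}_{F_{k_i}}\lesssim\norm{u_i}_{F_{k_i}(T)}$, fix $\gamma\in\mathcal S(\R)$ supported in $[-1,1]$ with $\sum_{m\in\Z}\gamma^4(t-m)\equiv1$, and set $\gamma_m(t)=\gamma(2^{2k_4}t-m)$; since $k_4=k_{max}$ this subdivides $[0,T]$ into $O(2^{2k_4}T)\le O(2^{2k_4})$ intervals of length $2^{-2k_4}$, a scale no coarser than the $F_{k_i}$-time-scale $2^{-2k_i}$ of any factor. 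Writing $\mathbf 1_{[0,T]}=\sum_m\gamma_m^4\mathbf 1_{[0,T]}$ and dealing with the $O(1)$ boundary values of $m$ separately (the sharp cutoff costs nothing extra), the bulk contribution is $\sum_m\int_{\R^2}\prod_i(\gamma_m\widetilde u_i)\,dxdt$, which by Parseval equals a constant times $\sum_m J_3(f^m_{k_1},f^m_{k_2},f^m_{k_3},f^m_{k_4})$ with $f^m_{k_i}=\mathcal F[\gamma_m\widetilde u_i]$: indeed on $\sum\xi_i=0,\ \sum\tau_i=0$ the oddness of $w$ gives $\sum_{i=1}^4(\tau_i-w(\xi_i))=-G(\xi_1,\xi_2,\xi_3)$ (see \eqref{eq:tri-resonant function}), so on the relevant support $2^{j_{max}}\gtrsim\max(2^{j_{sub}},|G|)$, and in addition $2^{j_i}\gtrsim2^{2k_4}$ for all $i$ because of the $\gamma_m$-localization.

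Next I would decompose each $f^m_{k_i}=\sum_{j_i}f^m_{k_i,j_i}$ in modulation and apply Cauchy--Schwarz, pairing $f^m_{k_4,j_4}$ against the convolution of the other three, so that $|J_3|\le\sum_{\vec j}\norm{\mathbf 1_{D_{k_4,j_4}}(f^m_{k_1,j_1}\ast f^m_{k_2,j_2}\ast f^m_{k_3,j_3})}_{L^2}\norm{f^m_{k_4,j_4}}_{L^2}$, the sum being restricted by $2^{j_{max}}\gtrsim\max(2^{j_{sub}},|G|)$. In case (a) all four frequencies are $\sim2^{k_4}$, so I would insert the $L^6$-type bound \eqref{eq:tri-hhh} (gaining $2^{-3k_4/2}$) and use $2^{j_i}\gtrsim2^{2k_4}$ on one extra slot (gaining $2^{-k_4}$); in cases (b) and (c) I would insert Corollary~\ref{cor:tri-L2}(a), resp.\ (b), exploiting in (b) that $k_1\le k_4-10$ forces $|\xi_i+\xi_4|\sim2^{k_4}$ for $i=1,2,3$ and hence $|G|\sim2^{5k_4}$ unconditionally, and in (c) that $k_{thd}=k_2\le k_4-10$ makes the factor $2^{-2k_{max}}$ available. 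Summing over $\vec j$ — using $\beta_{k_i,j_i}\ge1$ so that $\sum_{j_i}2^{j_i/2}\norm{f^m_{k_i,j_i}}_{L^2}\lesssim\norm{f^m_{k_i}}_{X_{k_i}}$, together with the modulation lower bounds on the two largest $j$'s — yields a per-interval gain of $2^{-5k_4/2}$, $2^{-3k_4}2^{k_1/2}$, $2^{-3k_4}2^{k_1/2}$ respectively. Finally I would sum over $m$: by estimate \eqref{eq:pXk3} of Lemma~\ref{lem:propXk} each $\norm{f^m_{k_i}}_{X_{k_i}}\lesssim\norm{\widetilde u_i}_{F_{k_i}}$, and the $O(2^{2k_4})$ values of $m$ are absorbed by the above gains, producing \eqref{eq:energy1-1.1}, \eqref{eq:energy1-1.2} and \eqref{eq:energy1-1.3}.

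For the refinements \eqref{eq:energy1-1.3c}, \eqref{eq:energy1-1.3a}, \eqref{eq:energy1-1.3b} I would run the same scheme, observing that pre-multiplying $u_1u_2$ by $P_{k'}$ (resp.\ by $\partial_xP_0$) pins down $|\xi_3+\xi_4|=|\xi_1+\xi_2|\sim2^{k'}$, which forces $|G|\gtrsim2^{4k_4}$ and rules out the near-resonant degeneracy; this extra $\gtrsim4k_4$ of modulation upgrades the per-interval gain by a further $2^{-k_4}$, so after the $O(2^{2k_4})$-fold sum one lands on $2^{-2k_4}$ instead of $2^{-k_4}$ (the $\partial_x$ in \eqref{eq:energy1-1.3b} costs only a harmless $O(1)$ frequency factor). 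This is the energy-estimate counterpart of the conservative-form observation in Remark~\ref{rem:total derivative}; the case $k_1=0,\ k_2\ge1$ of \eqref{eq:energy1-1.3c} is handled the same way once one notes that $|\xi_1+\xi_2|$ can be small only when $k_2\in\{1,2\}$, a thin region absorbed via the decay of $\beta_{k,j}$ in the large-modulation slot, exactly as in Lemma~\ref{lem:energy-quadratic} and Case~III of Proposition~\ref{prop:trilinear}.

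The main obstacle is not any single inequality but carrying this through every configuration of which $k_i$ are mutually comparable, which of the four modulations is largest, and whether $|G|$ sits at its generic size $\sim2^{5k_4}$ or degenerates near a resonance, checking in each branch that the block gain plus the two constrained-modulation powers beat the $2^{2k_4}$ cost of the time subdivision. The genuinely delicate branch is the two-low/two-high case (c) with $k_1$ of size $O(1)$ and the two low frequencies nearly cancelling: there is no spare $2^{k_1/2}$, the resonance gives nothing, and the decay built into the weight $\beta_{k,j}$ for large modulation must be used in full to recover the claimed $2^{-2k_4}$.
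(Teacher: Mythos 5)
Your plan coincides with the paper's own proof: the same subdivision of $[0,T]$ into $O(2^{2k_4})$ intervals of length $2^{-2k_4}$ via a quartic partition of unity with separate treatment of the $O(1)$ boundary pieces, the same reduction to $J_3$ and the trilinear block estimates (the $L^6$-type bound \eqref{eq:tri-hhh} for (a), Corollary \ref{cor:tri-L2} for (b) and (c)), and the same use of the resonance lower bound $2^{j_{max}}\gtrsim |G|\gtrsim 2^{4k_4}2^{k'}$ for the refinements \eqref{eq:energy1-1.3c}--\eqref{eq:energy1-1.3b}. Be aware that the paper only writes out (a) and the refined estimates in (c) in detail, deferring (b) and the generic bound \eqref{eq:energy1-1.3} to \cite{KP2015, Kwak2018}, and the delicate branch you correctly flag (largest modulation sitting on the third-largest frequency in the two-high/two-low configuration) is exactly what is resolved in those references.
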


\begin{rem}
In \cite{KP2015}, a weaker estimate \eqref{eq:energy1-1.3} is enough to control the cubic term with one derivative, while, in this paper, \eqref{eq:energy1-1.3c}--\eqref{eq:energy1-1.3b} are necessary to control the cubic terms with three derivatives. On the other hand, under the periodic boundary condition, \eqref{eq:energy1-1.3} is optimal, due to the lack of  \emph{smoothing effect}. We refer to \cite{KP2015} and \cite{Kwak2018} for a part of proof and the periodic case (also for the comparison), respectively.

\end{rem}

\begin{proof}[Proof of Lemma \ref{lem:energy1-1}]
We only prove $(a)$ and $(c)$. The proof of $(b)$ is analogous to the proof of $(c)$, thus we omit it. For part $(b)$, see \cite{KP2015, Kwak2018}.

\medskip

$(a)$ We apply a similar argument as in Remark \ref{rem:longer interval} to the interval $[0,T]$. Let choose $\rho : \R \to [0,1]$ to make a partition of unity, that is, $\sum_{m \in \Z}\rho^4(x-m) \equiv 1$, for all $x \in \R$. It follows that 
\[\left| \int_{[0,T] \times \R}  u_1u_2u_3u_4 \;dx dt\right| \lesssim \sum_{|m| \lesssim 2^{2k_4}} \left| \int_{\R^2}\prod_{i=1}^{4}\left(\rho(2^{2k_4}t-m)\mathbf{1}_{[0,T]}(t)u_i\right) \; dxdt \right|.\]
Set
\[A := \left\{m:\rho(2^{2k_4}t-m)\mathbf{1}_{[0,T]}(t) \mbox{ non-zero and } \neq \rho(2^{2k_4}t-m) \right\}.\]
Note that $|A| \le 4$. We split 
\[\sum_{|m| \lesssim 2^{2k_4}} = \sum_{m \in A} + \sum_{m \in A^c}.\]
It suffices to show $(a)$ on the second summation, since otherwise, the same argument in addition to 
\[\sup_{j \in \Z_+}2^{j/2}\norm{\eta_j(\tau-w(\xi)) \cdot \mathcal{F}[\mathbf{1}_{[0,1]}(t)\rho(2^{2k}t-m)u]}_{L^2} \lesssim \norm{\rho(2^{2k}t-m)\widetilde{u}}_{X_{k}}\]
gives better result, thanks to the absence of $2^{2k_4}$ (see \cite{Guo2012, KP2015} for the details).

\medskip 

On the second summation ($\sum_{m \in A^c}$), we can ignore $\mathbf{1}_{[0,T]}(t)$. Similarly as in Section \ref{sec:nonlinear}, let $u_{k_i} = \mathcal{F}[\rho(2^{2k_4}t-m)\widehat{u}_i(\xi_i)]$ and $u_{k_i,j_i} = \eta_{j_i}(\tau_i - w(\xi_i))u_{k_i}$, $i=1,2,3,4$. Parseval's identity and \eqref{eq:pre2} yield
\begin{equation}\label{eq:energy1-1.5}
\sum_{m \in A^c} \left| \int_{\R^2}\prod_{i=1}^{4}\left(\rho(2^{2k_4}t-m)\mathbf{1}_{[0,T]}(t)u_i\right) \; dxdt \right| \lesssim \sup_{m \in A^c} 2^{2k_4} \sum_{j_1,j_2,j_3,j_4 \ge 2k_4} \left|\int_{\R^2} \prod_{i=1}^4 \mathcal{F}^{-1} [u_{k_i,j_i}] (t,x) \; dxdt \right|.
\end{equation}
H\"older inequality and \eqref{eq:L6} ensure
\[\begin{aligned}
\left|\int_{\R^2} \prod_{i=1}^4 \mathcal{F}^{-1} [u_{k_i,j_i}] (t,x) \; dxdt \right| \lesssim&~{} \norm{u_{k_1,j_1}}_{L^2}\prod_{i=2}^4 \norm{ \mathcal{F}^{-1} [u_{k_i,j_i}]}_{L^6}\\
\lesssim&~{} 2^{-3k_4/2}2^{-j_1/2}\prod_{i=1}^42^{j_i/2}\norm{u_{k_i,j_i}}_{L^2},
\end{aligned}\]
together with \eqref{eq:energy1-1.5}, one concludes \eqref{eq:energy1-1.1} and we complete the proof.

\medskip

$(c)$ The proof of \eqref{eq:energy1-1.3} can be found in \cite{KP2015}. The proof of \eqref{eq:energy1-1.3c} follows the proof of \eqref{eq:energy1-1.3} with a modification $j_{max} \ge 4k_4 + k_2 -10$ instead of $j_{max} \ge 2k_4$. Thus we omit the detail.

Note that 
\begin{equation}\label{c0}
\begin{cases}
2^{k'} \sim 2^{k_2}, \quad &\mbox{if} \quad  k_1 \le k_2 -4,\\
2^{k'} \ll 2^{k_2}, \quad &\mbox{if} \quad |k_1-k_2| \le 4,
\end{cases}
\end{equation}
since $2^{k'} \sim |\xi_1 + \xi_2| \lesssim 2^{k_2}$\footnote{The case $|\xi_1 + \xi_2| \sim 2^{k_2}$, when $|k_1-k_2| \le 4$, exists, if both $\xi_1$ and $\xi_2$ have same sign. However, under this condition, one has the same conclusion as \eqref{eq:energy1-1.3c}.}. 

We first show \eqref{eq:energy1-1.3a}. Similarly as above, it suffices to estimate on $\sum_{m \in A^c}$. Using $2^{j_{max}} \gtrsim 2^{4k_4}2^{k'}$ in addition to \eqref{c0}, one immediately obtains from Lemma \ref{lem:tri-L2} $(b)$ that
\[\mbox{LHS of } \eqref{eq:energy1-1.3a} \lesssim 2^{-2k_4}2^{k_1/2} \sum_{j_1,j_2,j_3,j_4 \ge 2k_4} \prod_{i=1}^42^{j_i/2}\norm{u_{k_i,j_i}}_{L^2} \lesssim 2^{-2k_4}2^{k_1/2}\prod_{i=1}^{4}\norm{u_i}_{F_{k_i}(T)}.\]

The proof of \eqref{eq:energy1-1.3b} is analogous to the proof of Lemma 4.1 (b) (in particular (4.6)) in \cite{GKK2013}. The left-hand side of \eqref{eq:energy1-1.3b} can be replaced by
\[\sum_{\ell \le 0}\left| \int_{[0,T] \times \R}  P_{\ell}(u_1u_2)u_3u_4 \;dx dt\right|.\]
If $k_2 =0$, similarly as the proof of \eqref{eq:energy1-1.3a}, we have
\[\mbox{LHS of } \eqref{eq:energy1-1.3b} \lesssim 2^{-2k_4}\prod_{i=1}^{4}\norm{u_i}_{F_{k_i}(T)}.\]
Otherwise ($k_1 \ge1$)\footnote{The case $|\xi_1 + \xi_2| \le 1$ cannot happen when $k_1 =0$ and $k_2 \ge 1$.}, the same argument in the proof of \eqref{eq:energy1-1.3a} yields
\[\mbox{LHS of } \eqref{eq:energy1-1.3b} \lesssim 2^{-2k_4}2^{k_1/2}\prod_{i=1}^{4}\norm{u_i}_{F_{k_i}(T)}.\]
Thus, we complete the proof.
\end{proof}

\begin{rem}\label{rem:weight effect}
Using the weight, one can have at least $2^{k_4/4}$ more derivative gain, while $2^{\frac58k_1}$ derivative loss occurs. Indeed, a direct computation gives
\[\sum_{j_1 \ge 2k_4} 2^{j_1/2}\norm{u_{k_1,j_1}}_{L^2} \lesssim 2^{-\frac{2k_4-5k_1}{8}}\norm{u_1}_{F_{k_1}(T)}.\]
Such derivative gain may be helpful to avoid the occurrence of the logarithmic divergence in $H^2$-energy estimates (see \cite{Kwak2018}).  Moreover, the derivative loss in low frequencies is not big in $H^2$, so be handled in $H^2$. This approach may be applied to LWP of the fifth-order mKdV $H^2(\T)$ (improvement of \cite{Kwak2018}, in authors' forthcoming project). 
\end{rem}

\medskip

\begin{prop}\label{prop:energy}
Let $s \ge 2$ and $T \in (0,1]$. Then, for the solution $u \in C([-T,T];H^{\infty}(\T))$ to \eqref{5G_Gen}, we have
\begin{equation}\label{energy0}
\norm{u}_{E^s(T)}^2 \lesssim \norm{u_0}_{H^s}^2 + \sum_{j=3}^{6} \norm{u}_{F^s(T)}^j.
\end{equation}
\end{prop}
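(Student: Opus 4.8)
The plan is to start from the $L^2$-energy identity already recorded in the preceding paragraph, namely the inequality
\[
\sup_{|t_k| \le T}\norm{P_k u(t_k)}_{L^2}^2 \lesssim \norm{P_k u_0}_{L^2}^2 + \sup_{|t_k| \le T}\left|\int_{\R\times[0,t_k]} P_k u \cdot P_k\big(\mathcal N_2(u)+\mathcal N_3(u)+\mathcal{SN}(u)\big)\,dxdt\right|,
\]
applied at every dyadic frequency $2^k$. Multiplying by $2^{2sk}$, summing in $k$, and recalling the definition of $\norm{\cdot}_{E^s(T)}$, the whole matter reduces to a frequency-localized estimate of the form
\[
\sum_{k\ge 0}2^{2sk}\sup_{|t_k|\le T}\left|\int_{\R\times[0,t_k]}P_k u\cdot P_k\mathcal N(u)\,dxdt\right|\lesssim \sum_{j=3}^{6}\norm{u}_{F^s(T)}^j,
\]
where $\mathcal N$ runs through $\mathcal N_2$, $\mathcal N_3$, $\mathcal{SN}$. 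The standard approach (Ionescu--Kenig--Tataru, as adapted in \cite{GKK2013, KP2015}) is to expand each $P_k\mathcal N(u)$ into a Littlewood--Paley sum $\sum P_k\big(\prod P_{k_i}u\big)$, symmetrize, and then decide, according to which input frequency is comparable to $k$, which of the multilinear weighted integral estimates to invoke.

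The key steps, in order, are: First I would treat the quadratic contribution $\mathcal N_2(u)$, for which the relevant bounds are exactly Lemma \ref{lem:energy-quadratic}; here the gain $2^{-\frac74 k_{max}}$ in the balanced case and the weighted bounds (c)--(d) in the high-low case, combined with the commutator/total-derivative structure $c_1\px u\px^2 u + c_2 u\px^3 u = c_1'\px(\px u\,\px u) + c_2'\px(u\px^2 u)$, suffice to absorb the three derivatives and produce $\norm{u}_{F^s(T)}^3$ (plus the $\norm{u}_{F^s(T)}^4$ coming from the cubic piece $180\mu^2\lambda^2 w^2 w_x$ of $\mathcal N_2$). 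Second, for the cubic nonlinearity $\mathcal N_3(u)$ I would decompose into the quadrilinear spacetime integral $\int u_1u_2u_3u_4$ with $u_4 = P_ku$ and use Lemma \ref{lem:energy1-1}: case (a) when all four frequencies are comparable, case (b) when the second-largest matches the largest, and case (c) — crucially the refined estimates \eqref{eq:energy1-1.3c}--\eqref{eq:energy1-1.3b} rather than the weaker \eqref{eq:energy1-1.3} — when only the top two frequencies are large, together with the total-derivative identity $40uu_xu_{xx}+10u^2u_{xxx}+10u_x^3 = 10(u^2u_{xx})_x+10(uu_x^2)_x$ to remove one derivative from the high mode, yielding $\norm{u}_{F^s(T)}^4$ (and $\norm{u}_{F^s(T)}^6$ from the quintic term $30w^4w_x$). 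Third, the shift term $\mathcal{SN}(u)$, carrying only one derivative, is handled by the simplest Cauchy--Schwarz bound quoted in the proof of Proposition \ref{prop:multilinear} and contributes $\norm{u}_{F^s(T)}^3+\norm{u}_{F^s(T)}^4$. Finally I would collect the powers $j=3,\dots,6$ and conclude \eqref{energy0}.

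The main obstacle is the \textbf{high-high $\to$ low} (and high-high-high $\to$ low, high-high-low $\to$ low) interactions appearing in the cubic term $\mathcal N_3$, exactly as flagged in Remark \ref{rem:longer interval}: the weight $\beta_{k,j}$ and the short-time localization, which are indispensable for the quadratic estimates, here work against us because the $N_{k_4}$-norm sits on a longer interval ($2^{-2k_4}$) than the $F_{k_i}$-norms ($2^{-2k_3}$, $i$ large). The remedy is the subdivision of $[0,t_k]$ into $\sim 2^{2k_3-2k_4}$ subintervals via a partition of unity $\sum_m\rho^4(2^{2k_4}t-m)\equiv 1$ as in part (a) of the proof of Lemma \ref{lem:energy1-1}, separating the boundary-crossing indices (a finite set $A$, handled with an extra gain) from the interior indices; for the latter one can drop the cutoff $\mathbf 1_{[0,T]}$ and apply \eqref{eq:pre2} / the $L^6$ Strichartz bound \eqref{eq:L6}. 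The delicate point is checking that after summing over all modulation parameters $j_1,\dots,j_4$ the resulting powers of $2^{k_3}$ and $2^{k_4}$ assemble into a bounded factor (or one summable in $k$) precisely at $s=2$ — the logarithmic-divergence threshold noted in Remark \ref{rem:weight effect} — which is where the derivative gain $2^{-(2k_4-5k_1)/8}$ from the weight and the total-derivative reduction must be used together.
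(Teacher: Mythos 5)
Your proposal follows essentially the same route as the paper's proof: the frequency-localized energy identity \eqref{eq:energy 1} summed against $2^{2sk}$, the quadratic part handled by Lemma \ref{lem:energy-quadratic} (i.e.\ Proposition 4.2 of \cite{GKK2013}), the cubic part reduced via the total-derivative identity to quadrilinear integrals estimated by Lemma \ref{lem:energy1-1} with the refined bounds \eqref{eq:energy1-1.3c}--\eqref{eq:energy1-1.3b}, the one-derivative terms by Cauchy--Schwarz, and the \emph{high-high $\to$ low} difficulty resolved by the time-interval subdivision of Remark \ref{rem:longer interval}. The only parts left implicit are the explicit case enumeration over the frequency configurations and the commutator argument for the $(u^2u_{xx})_x$ piece (which the paper defers to Proposition \ref{prop:energy diff}), but the strategy is correct and identical in substance.
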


\begin{proof}[Proof of Proposition \ref{prop:energy}]
The definition of the $E^s(T)$ norm says
\[\norm{u}_{E^s(T)}^2 -\norm{P_{\le 0}(u_0)}_{L^2}^2 = \sum_{k \ge 1}\sup_{t_k \in [-T,T]}2^{2sk}\norm{P_k(u(t_k))}_{L^2}^2.\]
Then, we immediately have
\[\begin{aligned}
2^{2sk}\norm{P_k(u(t_k))}_{L^2}^2 - 2^{2sk}\norm{P_k(u_0)}_{L^2}^2  \lesssim&~{} 2^{2sk}\left|\int_{\R \times [0,t_k]}P_k(u)P_k(\mathcal N_2(u)) \; dxdt\right|\\
&+2^{2sk}\left|\int_{\R \times [0,t_k]}P_k(u)P_k(\mathcal N_3(u)) \; dxdt\right|\\
&+2^{2sk}\left|\int_{\R \times [0,t_k]}P_k(u)P_k(\mathcal{SN}(u)) \; dxdt\right|\\
&=: I_1(k)+I_2(k)+I_3(k),
\end{aligned}\]
thanks to \eqref{eq:energy 1}. Proposition 4.2 (in addition to Remark 4.3) in \cite{GKK2013} yields
\[\sum_{k \ge 1} I_1(k) \lesssim \norm{u}_{F^s(T)}^3 + \norm{u}_{F^s(T)}^4,\]
for $s \ge \frac54$.

\medskip

We now focus on $I_2(k)$. Note that a direct calculation gives
\[40 uu_x u_{xx} + 10 u^2u_{xxx} + 10 u_x^3 = 10(u^2)_xu_{xx} + 10 (u^2u_{xx})_x + 10 u_x^3.\]
We split $I_2(k)$ (in particular cubic part in $\mathcal N_3(u)$) into $I_{2,1} + I_{2,2} +I_{2,3}$, where
\[I_{2,1} := 2^{2sk}\left|\int_{\R \times [0,t_k]}P_k(u)P_k((u^2)_xu_{xx}) \; dxdt\right|,\]
\[I_{2,2} := 2^{2sk}\left|\int_{\R \times [0,t_k]}P_k(u)P_k((u^2u_{xx})_x) \; dxdt\right|\]
and
\[I_{2,3} := 2^{2sk}\left|\int_{\R \times [0,t_k]}P_k(u)P_k(u_x^3) \; dxdt\right|.\]

We first estimate $I_{2,1}$. We further decompose $I_{2,1}$ as follows:
\[\begin{aligned}
I_{2,1} \lesssim& 2^{2sk} \sum_{k' \le k-10}\left|\int_{\R \times [0,t_k]}P_k(u)P_k(P_{k'}(u^2)_xu_{xx}) \; dxdt\right|\\
&+2^{2sk} \sum_{k' > k-10, k_3 \ge 0}\left|\int_{\R \times [0,t_k]}P_k^2(u)P_{k'}((u^2)_x)P_{k_3}(u_{xx}) \; dxdt\right|\\
=:&~{}I_{2,1,1}+I_{2,1,2}.
\end{aligned}\]
Note that $k' \le k_2 +10$ in $I_{2,1,1}$. Lemma \ref{lem:energy1-1} yields
\[\begin{aligned}
I_{2,1,1} \lesssim&~{} 2^{2sk}\sum_{k_1\le k_2 \le k-10}2^{k_1/2}2^{k_2}\norm{P_{k_1}u}_{F_{k_1}(T)}\norm{P_{k_2}u}_{F_{k_2}(T)}\sum_{|k_0-k|\le 10} \norm{P_{k_0}u}_{F_{k_0}(T)}^2\\
\lesssim&~{} 2^{2sk}\sum_{|k_0-k|\le 10}2^{\frac52k_0}\norm{P_{k_0}u}_{F_{k_0}(T)}^4\\
\lesssim&~{} 2^{2sk}\sum_{k_1 \ge k+10}2^{\frac32k_1}\norm{P_{k_1}u}_{F_{k_1}(T)}^2\sum_{|k_0-k|\le 10} \norm{P_{k_0}u}_{F_{k_0}(T)}^2\\
\lesssim&~{} 2^{2sk}\norm{P_{k}u}_{F_{k}(T)}^2\norm{u}_{F^{\frac54}(T)}^2
\end{aligned}\]

We divide the summation over $k_3$ in $I_{2,1,2}$ by $\sum_{k_3 \le k-10} + \sum_{|k_3 - k| \le 10} + \sum_{k_3 \ge k+10}$. Then, by the support property, we know that the integral vanishes unless $|k'-k| \le 10$ on the first and second summations and $k' \ge k+10$ on the last summation. Note on the last summation that $|k'-k_3| \le 10$.  

\medskip

On the first summation, the following cases of $k_1$ and $k_2$ (assuming $k_1 \le k_2$ by the symmetry) are possible:
\begin{enumerate}
\item $k_1 \le k-10$ and $|k_2 - k| \le 10$
\item $|k_1-k_2| \le 10$ and $k_2 \ge k+ 10$
\item $|k_1-k_2|\le10$ and $|k_2 - k| \le 10$
\end{enumerate} 

It suffices to assume in the first case that $k_1 \le k_3$, since two derivatives are taken in the $k_3$-frequency mode. We use \eqref{eq:energy1-1.3c} and \eqref{eq:energy1-1.3} with the use of the weight (see Remark \ref{rem:weight effect}) when $k_1 = 0$ and $k_1 \ge 1$, respectively, to estimate $I_{2,1,2}$, precisely, $I_{2,1,2}$ is bounded by
\[\begin{aligned}
&2^{2sk}\norm{P_0u}_{F_0(T)}\sum_{k_3 \le k-10}2^{k_3}\norm{P_{k_3}u}_{F_{k_3}(T)}\sum_{|k_2-k|\le 10}\norm{P_{k_2}u}_{F_{k_2}(T)}^2\\
&+2^{2sk}\sum_{1 \le k_1 \le k_3 \le k-10}2^{\frac32k_1}2^{2k_3}2^{-\frac18k}\norm{P_{k_1}u}_{F_{k_1}(T)}\norm{P_{k_3}u}_{F_{k_3}(T)}\sum_{|k_2-k|\le 10}\norm{P_{k_2}u}_{F_{k_2}(T)}^2\\
\lesssim&~{} 2^{2sk}\norm{P_{k}u}_{F_{k}(T)}^2\norm{u}_{F^2(T)}^2 
\end{aligned}\]
Under the second case, by \eqref{eq:energy1-1.3a}, $I_{2,1,2}$ is bounded by
\[\begin{aligned}
&2^{2sk}\sum_{\substack{k_3 \le k-10\\|k_1-k_2|\le 10\\k_2 \ge k+10}}2^{\frac52k_3}2^{k}2^{-2k_2}\prod_{j=1}^3\norm{P_{k_j}u}_{F_{k_j}(T)}\norm{P_{k}u}_{F_{k}(T)}\\
\lesssim&~{} \max(2^{\frac32k},2^{(s-1)k})\norm{P_{k}u}_{F_{k}(T)}\norm{u}_{F^{s}(T)}^3, 
\end{aligned}\]
for $s \ge 0$.

Under the last case, by \eqref{eq:energy1-1.2}, $I_{2,1,2}$ is bounded by
\[\begin{aligned}
&2^{2sk}\sum_{\substack{k_3 \le k-10\\|k_1-k_2|\le 10\\|k_2-k| \le 10}}2^{\frac52k_3}\prod_{j=1}^3\norm{P_{k_j}u}_{F_{k_j}(T)}\norm{P_{k}u}_{F_{k}(T)}\\
\lesssim&~{} 2^{2sk}\norm{P_{k}u}_{F_{k}(T)}^2\norm{u}_{F^{\frac54}(T)}^2. 
\end{aligned}\]

\medskip

On the second summation, the possible cases of $k_1$ and $k_2$ are same as before. Using \eqref{eq:energy1-1.2}, \eqref{eq:energy1-1.3a} and \eqref{eq:energy1-1.1}, one concludes that $I_{2,1,2}$ is bounded by
\[\begin{aligned}
&2^{2sk}\sum_{\substack{k_1 \le k-10\\|k_2-k_3|\le 10\\|k_2-k| \le 10}}2^{2k}2^{k_1/2}\prod_{j=1}^3\norm{P_{k_j}u}_{F_{k_j}(T)}\norm{P_{k}u}_{F_{k}(T)}\\
&+2^{2sk}\sum_{\substack{|k_3 - k| \le 10\\|k_1-k_2|\le 10\\k_2 \ge k+ 10}}2^{\frac32k_2}\prod_{j=1}^3\norm{P_{k_j}u}_{F_{k_j}(T)}\norm{P_{k}u}_{F_{k}(T)}\\
&+2^{2sk}\sum_{\substack{|k_1-k_2|\le 10\\ |k_3 - k| \le 10\\|k_2-k| \le 10}}2^{\frac52k}\prod_{j=1}^3\norm{P_{k_j}u}_{F_{k_j}(T)}\norm{P_{k}u}_{F_{k}(T)}\\
\lesssim&~{} 2^{2sk}\norm{P_{k}u}_{F_{k}(T)}^2\norm{u}_{F^{2}(T)}^2. 
\end{aligned}\]

\medskip

On the last summation, the following cases of $k_1$ and $k_2$ (assuming $k_1 \le k_2$ by the symmetry) are possible:
\begin{enumerate}
\item $k_1 \le k_2-10$ and $|k_2 - k_3| \le 10$
\item $|k_1-k_2| \le 10$ and $k_2 \ge k_3 10$
\item $|k_1-k_2|\le10$ and $|k_2 - k_3| \le 10$
\end{enumerate} 
Since $k$-frequency is the lowest frequency, hence one similarly or easily has
\[\begin{aligned}
&2^{2sk}\sum_{\substack{k_1 \le k_2-10\\|k_2-k_3|\le 10\\k_3 \ge k+10}}2^{2k_3}2^{\frac32k_1}2^{-k_3/8}\prod_{j=1}^3\norm{P_{k_j}u}_{F_{k_j}(T)}\norm{P_{k}u}_{F_{k}(T)}\\
&+2^{2sk}\sum_{\substack{k_2 \ge k_3+10\\|k_1-k_2|\le 10\\k_3 \ge k+10}}2^{k_3}2^{k/2}\prod_{j=1}^3\norm{P_{k_j}u}_{F_{k_j}(T)}\norm{P_{k}u}_{F_{k}(T)}\\
&+2^{2sk}\sum_{\substack{|k_1-k_2|\le 10\\ |k_2 - k_3| \le 10\\k_3 \ge k+10}}2^{2k_3}2^{k/2}\prod_{j=1}^3\norm{P_{k_j}u}_{F_{k_j}(T)}\norm{P_{k}u}_{F_{k}(T)}\\
\lesssim&~{} \max\left(2^{\frac{15}{8}},2^{(\frac{27}{8}-s)k} \right)\norm{P_{k}u}_{F_{k}(T)}\norm{u}_{F^s(T)}^3,
\end{aligned}\]
for $s \ge \frac98$, thanks to Lemma \ref{lem:energy1-1} (c) and (b).

\medskip

The estimate of $I_{2,2}$ is very similar as before. In view of the estimate of $I_{2,1}$, one knows that the worst case appears when the frequency support of $u_{xx}$ is $I_k$. However, a direct calculation (integration by parts) gives
\[\begin{aligned}
\left|\int_{\R \times [0,t_k]}P_k(u)(P_{k'}(u^2)P_k(u_{xx}))_x \; dxdt\right| =&~{} \left|\int_{\R \times [0,t_k]}(P_k(u))_x(P_{k'}(u^2)P_k(u_{xx})) \; dxdt\right|\\
=&~{} \frac12\left|\int_{\R \times [0,t_k]}((P_k(u))_x)^2(P_{k'}(u^2))_x \; dxdt\right|,
\end{aligned}\]
which is exactly same as $I_{2,1}$ (in particular, $I_{2,1,1}$ and $I_{2,1,2}$ under $|k_3-k| \le 10$). The rigorous justification of this observation can be seen in the commutator estimates, see the proof of Lemma 4.1 (c) in \cite{GKK2013} for the details or see the proof of Proposition \ref{prop:energy diff} below. Moreover, one can see that the derivatives are fairly distributed in $I_{2,3}$, and hence it can be easily or similarly controlled as the estimate of $I_{2,1}$. We omit the details.

\medskip
On the other hand, the rest part,
\[2^{2sk}\left|\int_{\R \times [0,t_k]}P_k(u)P_k(F(u)) \; dxdt\right|,\]
where $F(u) = (u^p)_x$, $p=2,4,5$, can be immediately handled by using 
\[\left|\int_{\R \times [0,t_k]}\prod_{j=1}^{p+1} u_j \; dxdt \right|\lesssim 2^{(k_1+\cdots+k_{p-1})/2}\prod_{j=1}^{p+1}\norm{u_j}_{F_{k_j}(T)},\]
where $u_j = P_{k_j}u \in F_{k_j}(T)$, $j=1,\cdots,p+1$ and assuming that $k_1 \le \cdots \le k_{p+1}$, for $p=2,4,5$. 

\medskip

Collecting all, we have 
\[\sum_{k \ge 1} (I_2(k) + I_3(k)) \lesssim \norm{u}_{F^s(T)}^2+\norm{u}_{F^s(T)}^4+\norm{u}_{F^s(T)}^5 + \norm{u}_{F^s(T)}^6,\]
for $s\ge2$, thus we complete the proof of \eqref{energy0}.
\end{proof}

\bigskip

Let $u_1$ and $u_2$ be solutions to \eqref{5G_Gen}. Define $v= u_1-u_2$, then $v$ solves
\begin{equation}\label{5G_Gen_Diff}
v_t + v_{5x} + \mathcal N_2(u_1,u_2) + \mathcal N_3(u_1,u_2) + \mathcal{SN}(u_1,u_2) = 0, \qquad v(0,x) = u_1(0,x) - u_2(0,x),
\end{equation}
where 
\[\mathcal N_2(u_1,u_2)  =  \mathcal N_2(u_1)  -  \mathcal N_2(u_2), \quad \mathcal N_3(u_1,u_2)  =  \mathcal N_3(u_1)  -  \mathcal N_3(u_2) \quad \mbox{and} \quad \mathcal{SN}(u_1,u_2)  =  \mathcal{SN}(u_1)  -  \mathcal{SN}(u_2).\]

\begin{prop}\label{prop:energy diff}
Let $s \ge 2$ and $T \in (0,1]$. Then, for solutions $v \in C([-T,T];H^{\infty}(\T))$ to \eqref{5G_Gen_Diff} and $u_1, u_2 \in C([-T,T];H^{\infty}(\T))$ to \eqref{5G_Gen}, we have
\[\norm{v}_{E^0(T)}^2 \lesssim \norm{v_0}_{L^2}^2 + \left(\sum_{j=1}^2 \left(\norm{u_j}_{F^s(T)}+\norm{u_j}_{F^s(T)}^2 + \norm{u_j}_{F^s(T)}^3 + \norm{u_j}_{F^s(T)}^4 \right) \right) \norm{v}_{F^0(T)}^2,\]
and
\[\begin{aligned}
\norm{v}_{E^s(T)}^2 \lesssim&~{} \norm{v_0}_{H^s}^2 + \left(\sum_{j=1}^2 \left(\norm{u_j}_{F^s(T)}+\norm{u_j}_{F^s(T)}^2 + \norm{u_j}_{F^s(T)}^3 + \norm{u_j}_{F^s(T)}^4 \right) \right) \norm{v}_{F^s(T)}^2\\
&+\left(\norm{u_1}_{F^{2s}(T)}+\norm{u_2}_{F^{2s}(T)}\right)\norm{v}_{F^0(T)}\norm{v}_{F^s(T)}.
\end{aligned}\]
\end{prop}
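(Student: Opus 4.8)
The plan is to mimic the structure of the proof of Proposition~\ref{prop:energy}, but now working with the difference equation \eqref{5G_Gen_Diff} instead of \eqref{5G_Gen}. Starting from the $L^2$-type identity \eqref{eq:energy 1} applied to $v = u_1 - u_2$ with $\mathcal{G} = -(\mathcal N_2(u_1,u_2) + \mathcal N_3(u_1,u_2) + \mathcal{SN}(u_1,u_2))$, and localizing in frequency via $P_k$, one reduces the estimate of $\norm{v}_{E^s(T)}^2 - \norm{v_0}_{H^s}^2$ to bounding the three space-time integrals
\[I_1(k) + I_2(k) + I_3(k) = \sum_{\ell=2}^{3} 2^{2sk}\left|\int_{\R \times [0,t_k]}P_k(v)P_k(\mathcal N_\ell(u_1,u_2)) \, dxdt\right| + 2^{2sk}\left|\int_{\R \times [0,t_k]}P_k(v)P_k(\mathcal{SN}(u_1,u_2)) \, dxdt\right|,\]
summed over $k \ge 1$ (plus the $P_{\le 0}$ piece from $\norm{v_0}_{L^2}$). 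For $I_1$ one quotes Proposition~4.2 of \cite{GKK2013} (the difference version), and $I_3$ is routine as in Proposition~\ref{prop:energy}, so the content is $I_2$, the cubic-nonlinearity-with-three-derivatives term.

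The key algebraic step is to write $\mathcal N_3(u_1) - \mathcal N_3(u_2)$ in a multilinear-in-differences form: each cubic term factors through $v = u_1 - u_2$, e.g. $u_1^2 u_{1,xxx} - u_2^2 u_{2,xxx} = (u_1+u_2)v\,u_{1,xxx} + u_2^2 v_{xxx}$, and similarly for the other terms, and then one uses the conservative rewriting $40uu_xu_{xx} + 10u^2u_{xxx} + 10u_x^3 = 10(u^2)_xu_{xx} + 10(u^2u_{xx})_x + 10u_x^3$ (exactly as in the proof of Proposition~\ref{prop:energy}) applied to the relevant factors so that one derivative is always removable in a high-frequency mode via integration by parts / commutator estimates. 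After this reduction each piece is a space-time integral of four functions (three carrying the appropriate powers of $u_1$ or $u_2$, one carrying $v$ or $v_{xxx}$), and one applies Lemma~\ref{lem:energy1-1} — precisely estimates \eqref{eq:energy1-1.1}--\eqref{eq:energy1-1.3b} — to each frequency configuration (high-high-high, high-high-low, high-low-low, etc.), just as in the $I_{2,1}, I_{2,2}, I_{2,3}$ case analysis. The difference is that one must keep track of which factor carries the "difference" $v$: when $v$ sits in the highest frequency mode one gets the term $\bigl(\sum_j \norm{u_j}_{F^s(T)} + \cdots + \norm{u_j}_{F^s(T)}^4\bigr)\norm{v}_{F^s(T)}^2$, while when $v$ sits in a \emph{low} frequency mode the high frequencies are all occupied by $u_1$ or $u_2$ and the weight $\beta_{k,j}$ forces a loss of $2^{cs k}$ on that high mode, which can only be absorbed by the $2s$ regularity of one of the $u_j$; this is the origin of the asymmetric last term $(\norm{u_1}_{F^{2s}(T)} + \norm{u_2}_{F^{2s}(T)})\norm{v}_{F^0(T)}\norm{v}_{F^s(T)}$ (and why the pure $E^0$ estimate, where no such $2s$ is available, needs only the symmetric structure). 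For the $E^0$ bound one simply runs the same analysis with $s=0$ and never encounters the high-frequency loss because $v$ in a low mode is measured in $F^0$, which costs nothing.

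The main obstacle I expect is the \emph{high-high-low $\Rightarrow$ low} and \emph{high-high-high $\Rightarrow$ low} interactions in $I_2$ when the difference $v$ occupies the low (output or minimal) frequency: here the short-time framework forces the interval-subdivision argument of Remark~\ref{rem:longer interval}, and the weight $\beta_{k,j}$ produces a factor $2^{3k/4}$ or so on the high mode which, in the $H^s$-difference estimate, must be traded against $\norm{u_j}_{F^{2s}(T)}$ rather than $\norm{u_j}_{F^{s}(T)}$ — getting the bookkeeping of these losses exactly right (so that the power of $2^k$ on the high mode never exceeds $2s$ after using one $u_j \in F^{2s}$ and one $u_j \in F^s$, using $2s - s = s \ge 2$) is the delicate point. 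The commutator estimate needed to justify moving a derivative off the high-frequency factor in the $(u^2u_{xx})_x$-type terms (as invoked in the $I_{2,2}$ estimate via Lemma 4.1(c) of \cite{GKK2013}) must also be carried out for the difference, but this is a direct adaptation. Collecting all frequency cases and summing over $k \ge 1$ via Cauchy--Schwarz in $k$ (as in Proposition~\ref{prop:energy}) then yields both displayed inequalities.
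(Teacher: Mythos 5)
Your proposal follows essentially the same route as the paper's proof: reduce via the energy identity \eqref{eq:energy 1} to the three frequency-localized space-time integrals, quote the difference estimate of Proposition 4.4 in \cite{GKK2013} for the quadratic part, factor the cubic differences through $v$, and run the Lemma~\ref{lem:energy1-1} case analysis together with the commutator/integration-by-parts argument for the $(u^2u_{xx})_x$-type term. The only small discrepancy is your attribution of the asymmetric $\norm{u_j}_{F^{2s}(T)}$ term: in the paper it enters solely through the quoted quadratic difference estimate, while the cubic contributions close with $\norm{u_j}_{F^{s}(T)}$ alone.
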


\begin{rem}
One can see that the cubic terms with three derivatives are not harmful even in $F^s$, while the same terms are the main enemy under the periodic boundary condition. The principal reason is due to the lack of the smoothing effect under the periodic condition. Compare Lemma \ref{lem:energy1-1} (c) and  Lemma 6.4 (c) and (d) in \cite{Kwak2018}.
\end{rem}

\begin{proof}
We first concentrate on the estimate on $\norm{v}_{E^0(T)}^2$. From the definition of $\norm{v}_{E^s(T)}^2$ and \eqref{eq:energy 1}, it suffices to control
\[\begin{aligned}
2^{2sk}\norm{P_k(v(t_k))}_{L^2}^2 - 2^{2sk}\norm{P_k(v_0)}_{L^2}^2  \lesssim&~{} 2^{2sk}\left|\int_{\R \times [0,t_k]}P_k(v)P_k(\mathcal N_2(u_1,u_2)) \; dxdt\right|\\
&+2^{2sk}\left|\int_{\R \times [0,t_k]}P_k(v)P_k(\mathcal N_3(u_1,u_2)) \; dxdt\right|\\
&+2^{2sk}\left|\int_{\R \times [0,t_k]}P_k(v)P_k(\mathcal{SN}(u_1,u_2)) \; dxdt\right|\\
&=: 2^{2sk}\wt{I}_1(k)+2^{2sk}\wt{I}_2(k)+2^{2sk}\wt{I}_3(k).
\end{aligned}\]

Proposition 4.4 in \cite{GKK2013} yields
\[\sum_{k \ge 1} \wt{I}(k) \lesssim \left(\sum_{j=1}^2 (\norm{u_j}_{F^s(T)}+\norm{u_j}_{F^s(T)}^2) \right) \norm{v}_{F^0(T)}^2,\]
and 
\[\sum_{k \ge 1} 2^{2sk}\wt{I}(k) \lesssim \left(\sum_{j=1}^2 (\norm{u_j}_{F^s(T)}+\norm{u_j}_{F^s(T)}^2) \right) \norm{v}_{F^s(T)}^2 + \left(\norm{u_1}_{F^{2s}(T)}+\norm{u_2}_{F^{2s}(T)} \right)\norm{v}_{F^0}\norm{v}_{F^s(T)},\]
for $s \ge 2$. Moreover, since the quintic term in $\mathcal N_3(u_1,u_2)$ and $\mathcal{SN}(u_1,u_2)$ contains only one derivative, one can easily handle them compared to the cubic term in $\mathcal N_3(u_1,u_2)$. Thus, in what follows, we only focus on $\wt{I}_2(k)$ (in particular, the cubic terms), similarly as the proof of Proposition \ref{prop:energy}.

\medskip

We write $\wt{I}_2(k) = \wt{I}_{2,1} - \wt{I}_{2,2} + \wt{I}_{2,3}$, where
\[\wt{I}_{2,1} := \frac12\left|\int_{\R \times [0,t_k]}P_k(v)P_k((u_1^2 + u_2^2)_xv_{xx} + (v(u_1+u_2))_x(u_1+u_2)_{xx}) \; dxdt\right|,\]
\[\wt{I}_{2,2} := \frac12\left|\int_{\R \times [0,t_k]}P_k(v_x)P_k((u_1^2 + u_2^2)v_{xx} + v(u_1+u_2)(u_1+u_2)_{xx}) \; dxdt\right| (=: \wt{I}_{2,2,1} + \wt{I}_{2,2,2})\]
and
\[\wt{I}_{2,3} := \left|\int_{\R \times [0,t_k]}P_k(v)P_k(v_x(u_{1,x}^2+u_{1,x}u_{2,x}+u_{2,x}^2)) \; dxdt\right|.\]

We, here, only consider $\wt{I}_{2,2}$ in order to provide a rigorous proof of the estimate of $I_{2,2}$ in the proof of Proposition \ref{prop:energy}. Moreover, it is easier to handle $\wt{I}_{2,1,2}$ than $\wt{I}_{2,1,1}$ (or similar), since less derivatives are taken in $v$, hence it is enough to estimate only $\wt{I}_{2,1,1}$. We reduce $\wt{I}_{2,2,1}$ as
\[\left|\int_{\R \times [0,t_k]}P_k(v_x)P_k(u^2v_{xx}) \; dxdt\right|.\]
A direct calculation gives
\[\begin{aligned}
\left|\int_{\R \times [0,t_k]}P_k(v_x)P_k(u^2v_{xx}) \; dxdt\right| \lesssim&~{} \sum_{k' \le k-10}\left|\int_{\R \times [0,t_k]}P_k(v_x)P_k(P_{k'}(u^2)v_{xx}) \; dxdt\right|\\
&+\sum_{k' \ge k- 9, k_3 \ge 0}\left|\int_{\R \times [0,t_k]}P_k^2(v_x)P_{k'}(u^2)P_{k_3}(v_{xx}) \; dxdt\right|.
\end{aligned}\]
Since
\[\begin{aligned}
P_k(v_x)P_k(P_{k'}(u^2)v_{xx})=&  P_k(v_x)P_k(v_{xx})P_{k'}(u^2) + P_k(v_x)[P_k, P_{k'}(u^2)]v_{xx}\\
=& \frac12 ((P_k(v_x))^2)_xP_{k'}(u^2) +  P_k(v_x)[P_k, P_{k'}(u^2)]v_{xx},\\
\end{aligned}\]
where $[A,B] = AB - BA$, the integration by parts yields
\[\sum_{k' \le k- 10}\left|\int_{\R \times [0,t_k]}((P_k(v_x))^2)_xP_{k'}(u^2) \; dxdt\right| = \sum_{k' \le k- 10}\left|\int_{\R \times [0,t_k]}((P_k(v_x))^2)P_{k'}((u^2)_x) \; dxdt\right|,\]
which is already dealt with in the proof of Proposition \ref{prop:energy} (in particular, $I_{2,1,1}$). Thus, we have
\[\sum_{k \ge 1} \sum_{k' \le k- 10}\left|\int_{\R \times [0,t_k]}((P_k(v_x))^2)P_{k'}((u^2)_x) \; dxdt\right| \lesssim \norm{u}_{F^s(T)}^2\norm{v}_{F^0(T)}^2\]
and
\[\sum_{k \ge 1} 2^{2sk} \sum_{k' \le k- 10}\left|\int_{\R \times [0,t_k]}((P_k(v_x))^2)P_{k'}((u^2)_x) \; dxdt\right| \lesssim \norm{u}_{F^s(T)}^2\norm{v}_{F^s(T)}^2,\]
for $s \ge 2$.

On the other hand, a direct computation, in addition to the mean value theorem, \eqref{chi} and \eqref{eq:regularity}, gives
\[\ft\left([\widetilde{P}_k,\widetilde{P}_{k'}(u^2)](v_{xx})\right)(\tau,\xi) =C\int_{\R^2}\ft(\widetilde{P}_{k'}((u^2)_x))(\tau', \xi') \cdot \ft(v_x)(\tau-\tau',\xi-\xi')\cdot m(\xi,\xi')\;d\xi'd\tau',\]
where, 
\[|m(\xi,\xi')|=\left|\frac{(\xi-\xi')(\chi_k(\xi)-\chi_k(\xi-\xi'))}{\xi'}\right| \lesssim |(\xi-\xi_1)\chi_k'(\xi-\theta\xi_1)|\lesssim \sum_{|k-k'|\le 4}\chi_{k'}(\xi-\xi_1),\]
for $0 \le \theta \le 1$. Thus, an analogous argument yields
\[\sum_{k \ge 1} \sum_{k' \le k- 10}\left|\int_{\R \times [0,t_k]}P_k(v_x)[P_k, P_{k'}(u^2)]v_{xx} \; dxdt\right| \lesssim \norm{u}_{F^s(T)}^2\norm{v}_{F^0(T)}^2\]
and
\[\sum_{k \ge 1} 2^{2sk} \sum_{k' \le k- 10}\left|\int_{\R \times [0,t_k]}P_k(v_x)[P_k, P_{k'}(u^2)]v_{xx} \; dxdt\right| \lesssim \norm{u}_{F^s(T)}^2\norm{v}_{F^s(T)}^2,\]
for $s \ge 2$.

\medskip

The rest of the proof, which is the estimate of 
\begin{equation}\label{rest}
\sum_{k' \ge k- 9, k_3 \ge 0}\left|\int_{\R \times [0,t_k]}P_k^2(v_x)P_{k'}(u^2)P_{k_3}(v_{xx}) \; dxdt\right|,
\end{equation}
is almost identical to the proof of  the estimate of $I_{2,1,2}$ in the proof of Proposition \ref{prop:energy}. Thus, we have
\[\sum_{k\ge 1} \eqref{rest} \lesssim \norm{u}_{F^s(T)}^2\norm{v}_{F^0(T)}^2\]
and
\[\sum_{k\ge 1} 2^{2sk} \eqref{rest} \lesssim \norm{u}_{F^s(T)}^2\norm{v}_{F^s(T)}^2,\]
for $s \ge 2$. Thus, we complete the proof.
\end{proof}

\subsection{Local and Global well-posedness}\label{sec:LWP}
The local-well-posedness argument (the classical energy method) is now standard. We refer the readers to \cite{IKT2008, GPWW2011, GKK2013, KP2015, Kwak2018} and references therein, for more details, and we, here, give a sketch of proof.

\medskip

We first state fundamental properties of $X^{s,b}$-type norms.

\begin{prop}\label{prop:H^s}
Let $s\ge 0$, $T\in (0,1]$, and $u\in F^s(T)$, then
\[\sup_{t\in [-T,T]} \|u(t)\|_{H^s} \lesssim \|u\|_{F^s(T)}\]
\end{prop}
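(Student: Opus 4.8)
The statement to prove, Proposition~\ref{prop:H^s}, is the embedding $\sup_{t\in[-T,T]}\|u(t)\|_{H^s}\lesssim \|u\|_{F^s(T)}$. Let me think about this.

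This is a standard fact in the short-time Fourier restriction norm machinery (Ionescu–Kenig–Tataru). The idea: $F^s(T)$ is built from the pieces $F_k(T)$ at each dyadic frequency, and $F_k$ is built from $X_k$ which is an $X^{0,1/2,1}$-type space localized to time intervals of length $2^{-2k}$. The key sub-fact is that $X^{0,1/2,1}$ embeds into $C(\mathbb{R}; L^2)$ — more precisely, $\|\mathcal{F}^{-1}(f_k)\|_{L^\infty_t L^2_x} \lesssim \|f_k\|_{X_k}$. This is because the $\ell^1$-in-$j$ structure of the $X_k$ norm (the $\sum_j 2^{j/2}\beta_{k,j}\|\eta_j(\tau-w(\xi))f\|_{L^2}$) gives enough summability: for each dyadic modulation piece $f_{k,j}$, one has $\|\mathcal{F}^{-1}(f_{k,j})\|_{L^\infty_t L^2_x} \lesssim 2^{j/2}\|f_{k,j}\|_{L^2}$ by Cauchy–Schwarz in $\tau$ (integrating $e^{it\tau}$ against an $L^2$ function supported on a set of measure $\sim 2^j$ in $\tau$), and $\beta_{k,j}\ge 1$, so summing over $j$ controls the sup norm.

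So here is the plan. First, I would fix $k$ and show $\sup_{t\in\mathbb{R}}\|P_k u(t)\|_{L^2} \lesssim \|P_k u\|_{F_k(T)}$ for the localized version. By definition of $F_k(T)$, pick an extension $\widetilde{f}$ of $P_k u$ off $[-T,T]$ with $\|\widetilde{f}\|_{F_k}\le 2\|P_k u\|_{F_k(T)}$. Then for any $t_0\in[-T,T]$, choosing the cutoff $\eta_0(2^{2k}(t-t_0))$ which equals $1$ near $t_0$, the function $\widetilde{f}\cdot\eta_0(2^{2k}(\cdot-t_0))$ agrees with $P_k u$ at $t=t_0$, so $\|P_k u(t_0)\|_{L^2} = \|\mathcal{F}^{-1}[\mathcal{F}(\widetilde f\cdot\eta_0(2^{2k}(\cdot-t_0)))](t_0)\|_{L^2}$, and by the $X_k\hookrightarrow C_t L^2_x$ embedding this is $\lesssim \|\mathcal{F}[\widetilde f\cdot\eta_0(2^{2k}(\cdot-t_0))]\|_{X_k} \le \|\widetilde f\|_{F_k}\lesssim \|P_k u\|_{F_k(T)}$, uniformly in $t_0$. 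The $X_k\hookrightarrow C_t L^2_x$ step is the sub-lemma I described: decompose into modulation pieces $f_{k,j}=\eta_j(\tau-w(\xi))f$, write $\mathcal{F}^{-1}(f_{k,j})(t) = \int e^{it\tau}\int e^{ix\xi}f_{k,j}^\sharp(\tau,\xi)\,d\xi\,d\tau$ type expression and bound its $L^2_x$ norm at each fixed $t$ by $\int \|f_{k,j}^\sharp(\tau,\cdot)\|_{L^2_\xi}\,d\tau \lesssim 2^{j/2}\|f_{k,j}\|_{L^2}$ via Cauchy–Schwarz on the $\tau$-support of measure $\lesssim 2^j$; then sum over $j$ using $\beta_{k,j}\ge 1$ (indeed $\beta_{0,j}=2^{j/2}$ and $\beta_{k,j}\ge 1$ for $k\ge1$), giving $\sum_j 2^{j/2}\|f_{k,j}\|_{L^2}\le \sum_j 2^{j/2}\beta_{k,j}\|f_{k,j}\|_{L^2}=\|f\|_{X_k}$.

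Second, I would assemble the Littlewood–Paley pieces: for fixed $t\in[-T,T]$, by the (almost) orthogonality of the $P_k$,
\[
\|u(t)\|_{H^s}^2 \sim \sum_{k\ge0} 2^{2sk}\|P_k u(t)\|_{L^2}^2 \le \sum_{k\ge0} 2^{2sk}\sup_{t'\in[-T,T]}\|P_k u(t')\|_{L^2}^2 \lesssim \sum_{k\ge0} 2^{2sk}\|P_k u\|_{F_k(T)}^2 = \|u\|_{F^s(T)}^2,
\]
and taking the supremum over $t\in[-T,T]$ on the left finishes the proof. The only mildly delicate point is that the right-hand bound is uniform in $t$, so the sup passes through; that is automatic since each term was already estimated by a quantity independent of $t$.

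The main obstacle — really the only non-bookkeeping point — is the $X_k\hookrightarrow C(\mathbb{R};L^2)$ embedding, i.e.\ getting $\sup_t$ control rather than just $L^2_t$ control out of the $X^{0,1/2,1}$ structure. This is exactly why the norm $\|f\|_{X_k}$ is taken with the $\ell^1$-sum over dyadic modulations weighted by $2^{j/2}$ (Besov-type), rather than the $\ell^2$-sum that would give only $X^{0,1/2}$ and hence no continuous-in-time embedding; and since the weights $\beta_{k,j}$ satisfy $\beta_{k,j}\ge1$, they only help. I expect to invoke Lemma~\ref{lem:propXk} (via \eqref{eq:pXk3}) to handle the time-cutoff $\eta_0(2^{2k}(t-t_0))$ cleanly when reducing from $F_k(T)$ to $X_k$. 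Everything else is routine Littlewood–Paley summation.
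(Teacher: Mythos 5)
Your argument is correct and is exactly the standard proof of this embedding: the paper itself does not prove Proposition~\ref{prop:H^s} but defers to the appendix of \cite{GKK2013} (and to \cite{IKT2008, KP2015, GPWW2011}), where the proof proceeds precisely as you describe — the $\ell^1$-in-modulation structure of $X_k$ gives $X_k\hookrightarrow C_tL^2_x$ via Cauchy–Schwarz on the $\tau$-support, the time cutoff $\eta_0(2^{2k}(\cdot-t_0))$ reduces $F_k(T)$ to $X_k$ at each $t_0$, and Littlewood–Paley summation finishes. No gaps.
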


\begin{prop}\label{prop:linear}
Let $T\in (0,1]$,  $u,v\in C([-T,T]:H^\infty)$ and
\[\partial_tu + \px^5u  = v  \ on \  \R\times(-T,T)\]
Then we have
\[\|u\|_{F^s(T)} \lesssim \|u\|_{E^s(T)}  + \|v\|_{N^s(T)},\]
for any $s \ge 0$.
\end{prop}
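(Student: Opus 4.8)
The plan is to follow the standard short-time linear estimate of Ionescu--Kenig--Tataru, exactly as in \cite{IKT2008, GKK2013, KP2015}, so I will only organize the argument and flag the one nontrivial ingredient. First I would reduce to a frequency localized statement: by the Littlewood--Paley decomposition and the definitions of $F^s(T)$, $E^s(T)$, $N^s(T)$, it suffices to prove, for each $k\ge 1$,
\[\norm{P_ku}_{F_k(T)} \lesssim \sup_{|t_k|\le T}\norm{P_ku(t_k)}_{L^2} + \norm{P_kv}_{N_k(T)},\]
together with the companion bound at $k=0$ with right-hand side $\norm{P_{\le 0}(u(0))}_{L^2} + \norm{P_0v}_{N_0(T)}$; squaring, multiplying by $2^{2sk}$ and summing in $k$ then yields the proposition. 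The $k=0$ case is easy, since at frequency $\lesssim 1$ one has $\sup_{|t|\le T}\norm{P_0u(t)}_{L^2}\lesssim\norm{P_0u(0)}_{L^2}+\norm{P_0v}_{L^1_tL^2_x}\lesssim\norm{P_0u(0)}_{L^2}+\norm{P_0v}_{N_0(T)}$, and on $[-T,T]\subset[-2,2]$ the space $F_0(T)$ reduces to a standard $X^{0,1/2,1}$-type space.

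For $k\ge 1$ I would first extend the data: pick an extension $g_k^{\mathrm{ext}}$ of $P_kv$ to $\R^2$, frequency supported in $I_k$, with $\norm{g_k^{\mathrm{ext}}}_{N_k}\le 2\norm{P_kv}_{N_k(T)}$, choose $t_0\in[-T,T]$ nearly attaining $\sup_{|t_k|\le T}\norm{P_ku(t_k)}_{L^2}$, and set
\[\tilde f_k(t):=W(t-t_0)\,P_ku(t_0)+\int_{t_0}^t W(t-s)\,g_k^{\mathrm{ext}}(s)\,ds.\]
By Duhamel's formula and uniqueness (both solve the same ODE in Fourier on $[-T,T]$), $\tilde f_k=P_ku$ on $[-T,T]$, so $\norm{P_ku}_{F_k(T)}\le\norm{\tilde f_k}_{F_k}$, and I would bound the two summands separately. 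For the homogeneous piece I would use the elementary fact that, for $\widehat\phi$ supported in $I_k$ and any $t_0,t_k$, $\norm{\mathcal F[\eta_0(2^{2k}(t-t_k))W(t-t_0)\phi]}_{X_k}\lesssim\norm{\phi}_{L^2}$: its space-time Fourier transform equals $\widehat\phi(\xi)\,2^{-2k}\widehat{\eta_0}(2^{-2k}(\tau-w(\xi)))$ up to unimodular factors, hence is concentrated at modulation $\lesssim 2^{2k}$ with rapid decay, and there the weight $2^{j/2}\beta_{k,j}$ is harmless ($\beta_{k,j}\sim1$ and $2^j2^{-2k}\le1$ for $j\le 2k$, while the rapid decay of $\widehat{\eta_0}$ dominates $2^{j/2}\beta_{k,j}$ for $j>2k$). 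Taking the supremum over $t_k$ gives the bound by $\norm{P_ku(t_0)}_{L^2}$.

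The main obstacle, and the only genuinely technical point, is the Duhamel term: one must show
\[\Bigl\|\mathcal F\Bigl[\eta_0(2^{2k}(t-t_k))\int_{t_0}^t W(t-s)\,g_k^{\mathrm{ext}}(s)\,ds\Bigr]\Bigr\|_{X_k}\lesssim\norm{g_k^{\mathrm{ext}}}_{N_k}\]
uniformly in $t_0,t_k\in\R$. This is precisely the step exploiting the $2^{-2k}$ time scale: I would decompose $g_k^{\mathrm{ext}}$ with a partition of unity $\sum_m\gamma(2^{2k}s-m)\equiv1$ at scale $2^{-2k}$, split $\int_{t_0}^t=\int_{-\infty}^t-\int_{-\infty}^{t_0}$ (the second term being a homogeneous solution, handled as above), and estimate the first term using the multiplier $(\tau-w(\xi)+i2^{2k})^{-1}$ built into the $N_k$ norm together with the regularity bound \eqref{eq:pXk3} of Lemma \ref{lem:propXk} to recombine the time pieces; since only $O(1)$ of the localized pieces interact with a fixed $\eta_0(2^{2k}(t-t_k))$, the sum converges. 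This is exactly the linear lemma of \cite{IKT2008, GKK2013}, with our dispersion relation $w(\xi)=-\xi^5$ and time scale $2^{-2k}$, and I would cite it rather than reproduce the computation. Combining the homogeneous and Duhamel bounds gives $\norm{\tilde f_k}_{F_k}\lesssim\norm{P_ku(t_0)}_{L^2}+\norm{g_k^{\mathrm{ext}}}_{N_k}$, whence the frequency localized estimate and, after summation in $k$, the proposition.
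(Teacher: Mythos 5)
Your proposal is correct and coincides with the paper's treatment: the paper gives no proof of Proposition \ref{prop:linear} and simply refers to the Appendix of \cite{GKK2013} (and to \cite{IKT2008, KP2015, GPWW2011}), and your outline is precisely that standard argument — frequency localization, Duhamel with an extension of $P_kv$ attaining the $N_k(T)$ infimum, the elementary $X_k$ bound for the free evolution cut off at time scale $2^{-2k}$, and the partition-of-unity treatment of the inhomogeneous term via the multiplier $(\tau-w(\xi)+i2^{2k})^{-1}$ and \eqref{eq:pXk3}. The only cosmetic point is that the $k=0$ block should also be handled by the same Duhamel/$X_0$ argument (with time scale $1$) rather than by the $L^\infty_tL^2_x$ energy bound alone, since $F_0(T)$ is still an $X^{s,b}$-type norm; this is exactly what your parenthetical remark amounts to.
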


See Appendix in \cite{GKK2013} for the proofs. We also refer to \cite{IKT2008, KP2015, GPWW2011}.
\medskip

One can observe that \eqref{5G} admits the scaling equivalence with \eqref{5G_Gen}: For $\lambda > 0$, if $u$ is a solution to \eqref{5G}, then $u_{\lambda}$, defined by
\[u_{\lambda}(t,x) := \lambda u(\lambda^5 t,\lambda x),\]
is a solution to \eqref{5G_Gen}. Moreover, a direct calculation yields $\norm{u_{0,\lambda}}_{\dot{H}^s} = \lambda^{s+\frac12}\norm{u_0}_{\dot{H}^s}$, which says the scaling exponent $s_c = -\frac12$. Thus, a small data local well-posedness of \eqref{5G_Gen} ensures the local-in-time well-posedness of \eqref{5G} for an arbitrary data. 

\medskip

From Duhamel's principle, we know that the solution to \eqref{5G_Gen} is of the following integral form:
\[u(t) = W(t)u_0 + \int_{0}^{t} W(t-s)  \left(\mathcal N_2(u)(s) + \mathcal N_3(u)(s) + \mathcal{SN}(u)(s) \right) ds,\]
where $W(t)$ is defined as in \eqref{Lin. Sol.}. We assume
\begin{equation}\label{eq:assumption}
\norm{u_0}_{H^s} \le \epsilon \ll 1.
\end{equation}
Remark that for any fixed $\mu \in \R^+$, we can choose $0 < \lambda_0 \ll 1$ sufficiently small such that the initial data satisfy \eqref{eq:assumption} and $\mu\lambda \le 1$ for all $\lambda \le \lambda_0$. The second condition ensures that our local well-posedness argument does not depend on $\mu$.

\medskip

We fix $s \ge 2$. Proposition \ref{prop:linear}, Propositions \ref{prop:bilinear}, \ref{prop:trilinear} and \ref{prop:multilinear}, and Proposition \ref{prop:energy} ensures
\[\left \{
\begin{array}{l}
\norm{u}_{F^{s}(T')}\lesssim \norm{u}_{E^s(T')} + \norm{\mathcal N_2(u) + \mathcal N_3(u) + \mathcal{SN}(u)}_{N^s(T')};\\
\norm{\mathcal N_2(u) + \mathcal N_3(u) + \mathcal{SN}(u)}_{N^s(T')} \lesssim \sum_{j=2}^{5} \norm{u}_{F^s(T)}^j;\\
\norm{u}_{E^s(T')}^2 \lesssim \norm{u_0}_{H^s}^2 + \sum_{j=3}^{6} \norm{u}_{F^s(T)}^j,
\end{array}
\right.\]
for any $T' \in [0,T]$, which, in addition to the smallness condition \eqref{eq:assumption} and continuity argument (see Lemma 6.3 in \cite{KP2015} for the details), implies \emph{a priori} bound:
\begin{equation}\label{eq:result}
\sup_{t \in [-T,T]}\norm{u(t)}_{H^s} \lesssim \norm{u(t)}_{F^s(T)} \lesssim \norm{u_0}_{H^s}.
\end{equation}

To complete the proof, we need
\begin{prop}\label{prop:energy diff-1}
Assume $s \ge 2$. Let $u_1, u_2 \in F^{s}(T)$ be solutions to \eqref{5G_Gen} with small initial data $u_{1,0},v_{2,0} \in H^{\infty}$. Let $v = u_1 -u_2$ and $v_0 = u_{1,0} - u_{2,0}$. Then we have
\[\norm{v}_{F^0(T)} \lesssim \norm{v_0}_{L^2}\]
and
\[\norm{v}_{F^s(T)} \lesssim \norm{v_0}_{H^s} + \norm{u_{1,0}}_{H^{2s}}\norm{v_0}_{L^2}.\]
\end{prop}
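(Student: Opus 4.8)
The plan is to close a standard energy-method iteration for the difference equation \eqref{5G_Gen_Diff}, feeding the a priori bound \eqref{eq:result} into the linear estimate (Proposition \ref{prop:linear}), the ``difference'' versions of the nonlinear estimates (parts (b), resp.\ (a), of Propositions \ref{prop:bilinear}, \ref{prop:trilinear}, \ref{prop:multilinear}), and the difference energy estimates (Proposition \ref{prop:energy diff}). First I would record the inputs. Since $u_{1,0},u_{2,0}\in H^\infty$ are small in $H^s$, \eqref{eq:result} gives $\norm{u_j}_{F^s(T)}\lesssim\norm{u_{j,0}}_{H^s}\le\epsilon$ for $j=1,2$; running the very same chain of estimates with the highest-frequency factor carrying the $F^{2s}$ weight and every remaining factor measured in $F^s$ (which is licit because all block and energy estimates already hold at the base regularity $s\ge2$) yields the persistence bound $\norm{u_j}_{F^{2s}(T)}\lesssim\norm{u_{j,0}}_{H^{2s}}$, with implicit constant depending only on $\epsilon$. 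Note the smallness is used only in $H^s$, not in $H^{2s}$.

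\emph{Step 1 ($F^0$ bound).} I would apply Proposition \ref{prop:linear} with $s=0$ to $v$, then Propositions \ref{prop:bilinear}(b), \ref{prop:trilinear}(b), \ref{prop:multilinear}(b) for the nonlinear terms and the first inequality of Proposition \ref{prop:energy diff} for $\norm{v}_{E^0(T')}$. Writing $X(T')=\norm{v}_{F^0(T')}$ and using $\norm{u_j}_{F^s(T)}\le\epsilon$, these combine to
\[
X(T')^2\lesssim\norm{v_0}_{L^2}^2+Q(\epsilon)\,X(T')^2,\qquad T'\in(0,T],
\]
where $Q(\epsilon)\lesssim\epsilon+\epsilon^2+\epsilon^3+\epsilon^4$. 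Choosing $\epsilon$ small enough to absorb the last term on the left (legitimately, since $T'\mapsto X(T')$ is finite for the smooth $v$, continuous and nondecreasing, so this is also a valid bootstrap in $T'$) yields $\norm{v}_{F^0(T)}\lesssim\norm{v_0}_{L^2}$.

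\emph{Step 2 ($F^s$ bound).} I would repeat the scheme with Proposition \ref{prop:linear} at level $s$, parts (a) of the difference nonlinear estimates, and the second inequality of Proposition \ref{prop:energy diff}. Inserting Step 1 to replace $\norm{v}_{F^0(T)}$ by $\norm{v_0}_{L^2}$, using the persistence bound $\norm{u_j}_{F^{2s}(T)}\lesssim\norm{u_{j,0}}_{H^{2s}}$, and absorbing the $Q(\epsilon)\norm{v}_{F^s(T)}^2$ term into the left-hand side, one is left with a quadratic inequality
\[
\norm{v}_{F^s(T)}^2\lesssim\norm{v_0}_{H^s}^2+\norm{u_{1,0}}_{H^{2s}}\norm{v_0}_{L^2}\,\norm{v}_{F^s(T)}
\]
(the $\norm{u_{2,0}}_{H^{2s}}$ contribution is handled identically, the roles of $u_1,u_2$ being symmetric). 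Solving $Y^2\lesssim A+BY$ gives $Y\lesssim\sqrt A+B$, i.e.\ $\norm{v}_{F^s(T)}\lesssim\norm{v_0}_{H^s}+\norm{u_{1,0}}_{H^{2s}}\norm{v_0}_{L^2}$, as claimed.

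The main obstacle is Step 2's reliance on controlling $\norm{u_j}_{F^{2s}(T)}$ by $\norm{u_{j,0}}_{H^{2s}}$ while the smallness hypothesis sits only at the $H^s$ level: one must verify that the energy estimate of Proposition \ref{prop:energy} (and the block estimates behind it) degrades gracefully when one factor is promoted to $F^{2s}$ and the others are kept at the already-subcritical regularity $F^s$, so that the loss is a pure high-frequency loss absorbed by the single $H^{2s}$ factor while every coupling constant stays $O(\epsilon)$. Everything else is bookkeeping: matching the $N^0$/$N^s$ norms in the three nonlinear propositions to the corresponding $F^0$/$F^2$/$F^s$ norms, and observing that all the estimates hold verbatim on every subinterval $[0,T']\subset[0,T]$, so the absorption (or continuity) argument is uniform in $T'$.
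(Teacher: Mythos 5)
Your proposal is correct and follows essentially the same route as the paper, whose entire proof is the one-line remark that the claim ``immediately follows'' from Proposition \ref{prop:linear}, the difference forms of Propositions \ref{prop:bilinear}, \ref{prop:trilinear}, \ref{prop:multilinear}, and Proposition \ref{prop:energy diff} under the a priori bound \eqref{eq:result}. You in fact supply more detail than the paper does, notably the continuity/absorption step closing the quadratic inequalities and the tame $F^{2s}$ persistence bound (smallness needed only at the $H^s$ level) that the paper leaves implicit but relies on in its Bona--Smith application.
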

It immediately follows from Proposition \ref{prop:linear}, Propositions \ref{prop:bilinear}, \ref{prop:trilinear} and \ref{prop:multilinear}, and Proposition \ref{prop:energy diff} under \eqref{eq:result}.

\medskip

For fixed $u_0 \in H^s$, a density argument enables us to choose a sequence $\{u_{0,n}\}_{n=1}^{\infty} \subset H^{\infty}$ such that $u_{0,n} \to u_0$ in $H^s$ as $n \to \infty$. Let $u_n(t) \in H^{\infty}$ is a solution to \eqref{5G_Gen} with initial data $u_{0,n}$. Using a similar argument as above and Proposition \ref{prop:energy diff-1}, one shows $\{u_n\}$ is a Cauchy sequence. Indeed, for $K \in \Z_+$, let $u_{0,n}^K = P_{\le K}(u_{0,n})$. Then, $u_n^K = P_{\le K}u_n$ satisfies the frequency localized equation ($P_{\le K} \eqref{5G_Gen}$) with the initial data $u_{0,n}^K$. We have from the triangle inequality that 
\[\sup_{t \in [-T,T]}\norm{u_m-u_n}_{H^s} \lesssim \sup_{t \in [-T,T]}\norm{u_m-u_m^K}_{H^s} +\sup_{t \in [-T,T]}\norm{u_m^K-u_n^K}_{H^s} +\sup_{t \in [-T,T]}\norm{u_n^K-u_n}_{H^s}.\]
The first and last terms are bounded by $\epsilon$, thanks to \emph{a priori} bound, and the second term is bounded by $\epsilon$, thanks to Proposition \ref{prop:energy diff}, precisely,
\[\begin{aligned}
\sup_{t \in [-T,T]}\norm{v_m^K - v_n^K}_{H^s(\T)} \lesssim&~{} \norm{v_{0,m}^K - v_{0,n}^K}_{H^s} +K^s\norm{v_{0,m}^K-v_{0,n}^K}_{L^2}\\
\lesssim&~{} \epsilon.
\end{aligned}\]
Hence the $3\epsilon$-argument completes the proof and we obtain a solution as the limit. The uniqueness and the continuity of dependence follow from an analogous argument. 

\begin{rem}
In view of all analyses above, we do not use the integrability of the Gardner equation \eqref{5G_Gen} to prove the local well-posedness, and thus we can apply our argument to prove the local result of \eqref{5G_Gen} with arbitrary coefficients.
\end{rem}

Small solutions $u$ to \eqref{5G_Gen} satisfies the (rescaled) conservation laws \eqref{M1cor}-\eqref{E1}-\eqref{E5}, namely
\[M[u](t)  :=  \frac 12 \int_\R u^2(t,x)dx = M[u](0),\]
\[E_\mu[u](t)  :=  \int_\R \left(\frac 12 u_x^2 -2\mu\lambda u^3  - \frac 12 u^4\right)(t,x)dx = E_{\mu}[u](0),\]
and
\begin{equation}\label{E5-1}
E_{5\mu}[u](t)  :=  \int_\R \left(\frac 12 u_{xx}^2 -10\mu\lambda uu_x^2 + 10 \mu^2\lambda^2 u^4 - 5u^2u_x^2 + 6\mu\lambda u^5 + u^6\right)(t,x)dx= E_{5\mu}[u](0).
\end{equation}
Using above conserved quantities and the Sobolev embedding in addition to the smallness condition, one proves Theorem \ref{GWP1}.

\section{Stability of breathers of the 5th order Gardner equation}\label{sect5}

Once we have shown the existence of global solutions of the Cauchy problem for the 5th order Gardner equation \eqref{5G}, we study now 
the stability properties of a special solution of \eqref{5G}. Before dealing with this stability result, we present basic facts on solutions of \eqref{5G}. 
The simplest solution of the 5th order \emph{focusing} Gardner equation is a traveling wave like  solution, usually called as soliton solution, and
explicitly defined as follows

\begin{defn}\label{5solG}
The   1-soliton solution $Q_{\mu}\equiv Q_{\mu,c}$  of the 5th order \emph{focusing} Gardner equation  \eqref{5G} 
is given by

\begin{equation}\label{examplesolGs} 
\begin{aligned}
&Q_{\mu}(t,x) := Q_{\mu,c} (x-v_{\mu,c}t+x_1),\quad Q_{\mu,c} (z) :=\frac{c}{2\mu + \sqrt{4\mu^2+c}\cosh(\sqrt{c}z)}, \quad c>0,~\mu,~x_1\in\R,\\
&\text{with}~~\quad v_{\mu,c}:= c^2 + 10\mu^2c,
\end{aligned}
\end{equation}
\end{defn}
\noindent
which indeed has a completely similar profile to the well known  Gardner soliton profile \cite{Ale11}. The 1-soliton solution $Q_{\mu,c}$ \eqref{examplesolGs} 
of the 5th order Gardner equation \eqref{5G} satisfies the nonlinear second order ODE:

\medskip

\[\begin{array}{ll}
Q_\mu'' -c\, Q_\mu + 6\mu Q_\mu^2 + 2Q_\mu^3=0, \quad Q_\mu>0, \quad Q_{\mu}\in H^1(\R).\\
\end{array}\]
Note that, as a solution of \eqref{5G}, $Q_\mu$ also satisfies naturally the fourth order ODE

\[\begin{aligned}
&Q_\mu^{''''} -v_{\mu,c}\, Q_\mu + \tilde{f}_5(Q_\mu)=0,\quad \text{with}\\
&\tilde{f}_5(Q_\mu)=10 (\mu +Q_\mu)^2Q_{\mu}^{''} +10(\mu +Q_\mu)(Q_{\mu}^{'})^2+60\mu^3Q_\mu^2+60\mu^2Q_\mu^3+30\mu Q_\mu^4 +6Q_\mu^5.\\
\end{aligned}\]

\medskip

Moreover, it is possible to build a solution made of the composition of N of such solitons, which is usually called as the \emph{N-soliton solution} for the 5th order Gardner equation 

\[\begin{aligned}
&u_\mu(t,x):=i\partial_x\log(g_\mu/f_\mu), \quad \mbox{where}\\
&f_\mu(t,x):= \sum_{\si=0,1}\exp\Big[\sum_{k=1}^N\si_k\displaystyle\left(\theta_k  + \rho_k\right) + \sum_{k<m}^N \si_k\si_m A_{km}\Big],\quad \mbox{and} \quad g_\mu(t,x):=f_\mu^*(t,x),\\
\end{aligned}
\]
with
\[\begin{aligned}
&\exp[\rho_k]:=\frac{i\sqrt{c_k}+2\mu}{\sqrt{c_k}},\quad \exp[A_{km}]=\Big(\frac{\sqrt{c_k}-\sqrt{c_m}}{\sqrt{c_k}+\sqrt{c_m}}\Big)^2,~~k,m=1,2,\dots,N,\\
&\theta_k=\sqrt{c_k}(x - v_{5,k}t - \varrho_k)\quad \mbox{and} \quad  v_{5,k}:= c_k^2 + 10\mu^2c_k,
\end{aligned}\]
where $\varsigma^*$ means the complex conjugate of  $\varsigma$, here $c_k$ is the scaling  and $\varrho_k$ an arbitrary constant phase, $ \sum_{\mu=0,1}$ means here the summation over all
possible combinations of $\si_k=0,1,~~k=1,2,\dots N$ and $\sum_{k<m}^N$ the summation over all possible combinations of the N
elements under the constraint $k<m$.

\medskip
From that multi-soliton solution, the 2-soliton solution is apparent. 
Now, if we take this 2-soliton solution of \eqref{5G}, and transforming its corresponding scalings $c_1,c_2$ to complex ones 
$c_1=c_2^*:=(\beta+ i\alpha)^2$, $\alpha, \beta \in \R\backslash\{0\}$, it allows us to build a new solution of \eqref{5G} named as the \emph{breather solution}. 
This is a localized in space and periodic in time  (modulo symmetries of the equation) solution, and it is defined as follows:
%

\begin{defn}[5th order Gardner breather]
Let $\al, \bt \in \R\backslash\{0\},~\mu\in \R^{+}\backslash\{0\}$ such that $\Delta=\al^2+\bt^2-4\mu^2>0$,  and $x_1,x_2\in \R$. The 5th order breather solution $B_\mu\equiv B_{\mu,5}$ of the 5th order Gardner equation \eqref{5G},  is given explicitly by the formula 
\be\label{5BreG}
B_\mu\equiv B_{\al, \bt,\mu,5}(t,x;x_1,x_2)   
 :=   2\partial_x\Bigg[\arctan\Big(\frac{G_\mu(t,x)}{F_\mu(t,x)}\Big)\Bigg],
\ee
where
\[\begin{aligned}
G_\mu(t,x) & :=   \frac{\bt\sqrt{\al^2+\bt^2}}{\al\sqrt{\Delta}}\sin(\al y_1) -\frac{2\mu\bt[\cosh(\bt y_2)+\sinh(\bt y_2)]}{\Delta},\\
F_\mu(t,x) & :=   \cosh(\bt y_2)-\frac{2\mu\bt[\al\cos(\al y_1)-\bt\sin(\al y_1)]}{\al\sqrt{\al^2+\bt^2}\sqrt{\Delta}},
\end{aligned}\]
with $y_1$ and $y_2$
\[\begin{aligned}
&y_1 = x+ \delta_{5} t + x_1, \quad y_2 = x+ \ga_{5} t +x_2,~~~\end{aligned}\]
and with velocities
\be\label{speedsnthG}
\begin{aligned}
&\delta_5:=-\alpha^4+10\alpha^2\beta^2-5\beta^4  +10(\alpha^2-3\beta^2)\mu^2,\\
&\gamma_5:=-\beta^4+10\alpha^2\beta^2-5\alpha^4  +10(3\alpha^2-\beta^2)\mu^2.
\end{aligned}
\ee

\medskip
\noindent
\end{defn}

\medskip


First of all, we remember the following identity for solutions of the of 5th order Gardner equation \eqref{5G} (see \cite[Appendix A]{AM1} for a detailed proof of a similar identity for the classical Gardner equation)

\begin{lem}\label{matsuno1}
 Let $u(t,x)=\partial_x\log(\frac{F_\mu-iG_\mu}{F_\mu+iG_\mu})$ be any solution of the 5th order Gardner equation \eqref{5G}. Then
 
 \[u^2 = \partial_x^2\log(G_\mu^2 + F_\mu^2) - 2\mu u.\]
\end{lem}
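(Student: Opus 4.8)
The plan is to reduce the claimed identity to a purely spatial bilinear relation between $F_\mu$ and $G_\mu$, and then verify that relation from the explicit breather formulas, in the spirit of \cite[Appendix A]{AM1}. Fix $t$ and abbreviate $F=F_\mu$, $G=G_\mu$, $\tau:=F^2+G^2$. Normalizing $u$ as in the $N$-soliton formula of Section~\ref{sect5} (so that $u$ is the real breather $B_\mu$ of \eqref{5BreG}), we have
\[u=2\,\partial_x\arctan\!\Big(\frac{G}{F}\Big)=\frac{2\,(FG_x-GF_x)}{\tau},\qquad \partial_x^2\log\tau=\frac{\tau\,\tau_{xx}-\tau_x^2}{\tau^2},\]
with $\tau_x=2(FF_x+GG_x)$ and $\tau_{xx}=2(F_x^2+G_x^2)+2(FF_{xx}+GG_{xx})$. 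Substituting into $u^2+2\mu u=\partial_x^2\log\tau$ and multiplying through by $\tau^2$ produces a polynomial relation in $F,G$ and their derivatives. The next step is to eliminate $\tau_x^2$ using the planar Lagrange (Binet--Cauchy) identity
\[(FF_x+GG_x)^2+(FG_x-GF_x)^2=(F^2+G^2)(F_x^2+G_x^2);\]
after this substitution the quartic terms $(FG_x-GF_x)^2$ all cancel (one coming from $u^2$, one from $-\tau_x^2$), the common factor $\tau>0$ can be divided out, and the whole statement collapses to
\begin{equation}
F F_{xx}+G G_{xx}-F_x^2-G_x^2+2\mu\,(F_xG-G_xF)=0. \tag{$\star$}
\end{equation}
In Hirota notation, writing $w:=F_\mu+iG_\mu$ and $\overline w=F_\mu-iG_\mu$ (so $w\overline w=\tau$ and, up to the normalization above, $u=i\,\partial_x\log(\overline w/w)$), the relation $(\star)$ is precisely the bilinear identity $(D_x^2-2i\mu D_x)(w\cdot\overline w)=0$, $D_x$ being the usual Hirota operator.

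So everything comes down to proving $(\star)$ for the breather data. I would carry this out directly: since $y_1=x+\delta_5t+x_1$ and $y_2=x+\gamma_5t+x_2$ satisfy $\partial_xy_1=\partial_xy_2=1$, the functions $F_\mu,G_\mu$ are constant-coefficient linear combinations of $\cos(\alpha y_1)$, $\sin(\alpha y_1)$, $\cosh(\beta y_2)$, $\sinh(\beta y_2)$, so their $x$-derivatives are elementary and $(\star)$ becomes, after expansion, a trigonometric--hyperbolic polynomial identity. I would check its vanishing by passing to the exponential variables $z_1=e^{i\alpha y_1}$, $z_2=e^{\beta y_2}$ and matching the coefficients of the monomials $z_1^{\pm1}z_2^{\pm1}$, $z_2^{\pm2}$, $1$; the hypothesis $\Delta=\alpha^2+\beta^2-4\mu^2>0$ enters only to make the denominators in the definition of $F_\mu,G_\mu$ meaningful and to guarantee $\tau>0$, it plays no role in the algebra of $(\star)$. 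This is exactly the computation done for the classical Gardner breather in \cite[Appendix A]{AM1}; crucially $(\star)$ contains only $x$-derivatives, so it is insensitive to the order of the flow — passing from the classical to the $5$th-order Gardner equation changes only the velocities $\delta_5,\gamma_5$, which do not appear in $(\star)$. Alternatively, one may simply invoke the Hirota bilinear structure: $f_\mu\propto F_\mu+iG_\mu$ and $g_\mu=\overline{f_\mu}\propto F_\mu-iG_\mu$ are the tau functions of the breather built in Section~\ref{sect5}, and $(D_x^2-2i\mu D_x)(w\cdot\overline w)=0$ is one of the bilinear equations they satisfy, its $D_x^2$-part being the $\mu$-independent (KdV-type) one and the $i\mu D_x$ correction originating from the shift $\exp[\rho_k]=(i\sqrt{c_k}+2\mu)/\sqrt{c_k}$.

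I expect the main obstacle to be the bookkeeping in the direct verification of $(\star)$: although $(\star)$ is a short relation, expanding $F F_{xx}+G G_{xx}-F_x^2-G_x^2+2\mu(F_xG-G_xF)$ with the explicit $F_\mu,G_\mu$ generates a fair number of mixed terms involving products of $\cos/\sin$ in $\alpha y_1$ with $\cosh/\sinh$ in $\beta y_2$, and organizing the cancellations — best done in the variables $z_1,z_2$ above — is where the real work lies. A minor preliminary point worth settling cleanly is the factor of $i$ relating the three expressions $\partial_x\log\big((F_\mu-iG_\mu)/(F_\mu+iG_\mu)\big)$, the breather $B_\mu=2\partial_x\arctan(G_\mu/F_\mu)$, and the $N$-soliton field $u_\mu=i\partial_x\log(g_\mu/f_\mu)$, so that $(\star)$ — rather than a spuriously complex variant — is the identity actually being checked. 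Once $(\star)$ is established, the reduction in the first paragraph returns Lemma~\ref{matsuno1} at once.
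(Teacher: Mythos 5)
Your proposal is correct and is essentially the argument the paper intends: the paper offers no proof of Lemma \ref{matsuno1} beyond citing \cite[Appendix A]{AM1}, and your reduction of the identity (after multiplying by $(F_\mu^2+G_\mu^2)^2$ and applying the Lagrange identity) to the single bilinear relation $(\star)$, followed by direct verification of $(\star)$ from the explicit data \eqref{notacionF}--\eqref{notacionG}, is precisely that computation; I have checked that $(\star)$ does hold (the constant terms and the coefficients of $[(\alpha^2-\beta^2)\sin(\al y_1)+2\al\bt\cos(\al y_1)]e^{\bt y_2}$ each cancel using $\Delta=\al^2+\bt^2-4\mu^2$). Your remark that $(\star)$ contains only $x$-derivatives, so the classical Gardner verification transfers verbatim since the velocities $\delta_5,\ga_5$ never enter, is exactly what legitimizes the paper's citation of the classical case; the only quibble is a sign convention in the Hirota form, which with $w=F_\mu+iG_\mu$ should read $(D_x^2+2i\mu D_x)(w\cdot\overline{w})=0$.
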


Now, we can compute explicitly the mass of such  breather solution: 
\begin{lem}
  Let  $B_\mu\equiv B_{\mu,5}$ be the  breather solution \eqref{5BreG} of  the  5th order Gardner equation \eqref{5G}. Then the mass of $B_\mu$ is   
  \[M[B_\mu]:= 2\beta + 2\mu\arctan\Big[\frac{4\mu\beta}{\Delta}\Big].\]
\end{lem}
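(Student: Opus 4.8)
The plan is to reduce the computation of $M[B_\mu]=\frac12\int_\R B_\mu^2\,dx$ to the evaluation of two boundary terms. Applying Lemma \ref{matsuno1} to $u=B_\mu$ yields the pointwise identity $B_\mu^2=\partial_x^2\log(G_\mu^2+F_\mu^2)-2\mu B_\mu$; integrating over $\R$ and using $B_\mu=2\partial_x\arctan(G_\mu/F_\mu)$ from \eqref{5BreG}, both terms on the right are exact $x$-derivatives of functions possessing limits at $\pm\infty$ (the breather $B_\mu$ decaying exponentially in $x$), so that
\[
M[B_\mu]=\frac12\Big[\partial_x\log(G_\mu^2+F_\mu^2)\Big]_{x=-\infty}^{x=+\infty}-\mu\Big[2\arctan(G_\mu/F_\mu)\Big]_{x=-\infty}^{x=+\infty}.
\]
Here $\arctan(G_\mu/F_\mu)$ is to be read as the continuous determination of $\arg(F_\mu+iG_\mu)$, which is legitimate since $F_\mu^2+G_\mu^2$ never vanishes (otherwise $B_\mu$ would fail to be smooth). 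It then remains only to identify the four boundary values.

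The second step is the asymptotic analysis of $F_\mu$ and $G_\mu$ as $x\to\pm\infty$ for fixed $t$, using that $y_1=x+\delta_5 t+x_1$ and $y_2=x+\gamma_5 t+x_2$ tend to $\pm\infty$ together. Taking $\beta>0$ without loss of generality, as $x\to+\infty$ the dominant balance is $\cosh(\beta y_2)\sim\frac12 e^{\beta y_2}$ and $\cosh(\beta y_2)+\sinh(\beta y_2)=e^{\beta y_2}$, so $F_\mu\sim\frac12 e^{\beta y_2}$ and $G_\mu\sim-\frac{2\mu\beta}{\Delta}e^{\beta y_2}$ (the oscillatory $\sin(\alpha y_1)$, $\cos(\alpha y_1)$ terms being of lower order); as $x\to-\infty$ one has $\cosh(\beta y_2)+\sinh(\beta y_2)=e^{\beta y_2}\to0$ while $\cosh(\beta y_2)\to+\infty$, hence $F_\mu\sim\frac12 e^{-\beta y_2}$ and $G_\mu$ stays bounded. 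Therefore $\partial_x\log(G_\mu^2+F_\mu^2)\to 2\beta$ as $x\to+\infty$ and $\to-2\beta$ as $x\to-\infty$, while $\arctan(G_\mu/F_\mu)\to\arctan\big(-\frac{4\mu\beta}{\Delta}\big)=-\arctan\frac{4\mu\beta}{\Delta}$ as $x\to+\infty$ and $\to0$ as $x\to-\infty$. Substituting,
\[
M[B_\mu]=\frac12\big(2\beta-(-2\beta)\big)-2\mu\Big(-\arctan\frac{4\mu\beta}{\Delta}-0\Big)=2\beta+2\mu\arctan\frac{4\mu\beta}{\Delta},
\]
which is the asserted identity; note that it is independent of $t$, consistently with the conservation of mass \eqref{M1cor}.

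The step I expect to be the main obstacle is the identity $\int_\R B_\mu\,dx=2\big[\arctan(G_\mu/F_\mu)\big]_{-\infty}^{+\infty}$: since $\arctan(G_\mu/F_\mu)$ jumps by $\pm\pi$ wherever $F_\mu$ changes sign, one must verify that the net increment of the continuous branch of $\arg(F_\mu+iG_\mu)$ over $\R$ equals the difference of its limits, with no spurious multiple of $2\pi$ — equivalently, that the planar curve $x\mapsto F_\mu(t,x)+iG_\mu(t,x)$ does not wind around the origin. This is exactly where the standing hypothesis $\Delta=\alpha^2+\beta^2-4\mu^2>0$ (equivalently $\mu\in(0,\frac{\sqrt{\alpha^2+\beta^2}}{2})$, as in Theorem \ref{StaBre1}) is used: it keeps $F_\mu^2+G_\mu^2$ bounded away from $0$ and controls the sign structure of $F_\mu,G_\mu$ so as to exclude the winding. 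All remaining work is a routine computation of limits of elementary functions.
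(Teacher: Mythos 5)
Your proof is correct and follows exactly the route the paper indicates (its proof is the one-line remark that the result ``follows directly from Lemma \ref{matsuno1}, by using the breather solution \eqref{5BreG}''): integrate the identity $B_\mu^2=\partial_x^2\log(G_\mu^2+F_\mu^2)-2\mu B_\mu$ and evaluate the two boundary terms from the asymptotics of $F_\mu,G_\mu$. You simply supply the details the paper omits, including the legitimate (and correctly resolved) caveat about taking the continuous branch of $\arctan(G_\mu/F_\mu)$.
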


\begin{proof}
 The proof follows directly from Lemma \ref{matsuno1}, by using the breather solution \eqref{5BreG}.
\end{proof}
Moreover, the  breather solution \eqref{5BreG} of  the  5th order Gardner equation \eqref{5G} satisfies the following nonlinear identities:

\begin{lem}\label{identitiesB5}
 Let  $B_\mu\equiv B_{\mu,5}$ be the  breather solution \eqref{5BreG} of  the  5th order Gardner equation \eqref{5G}. Then
\ben
\item  $B_\mu =\tilde B_{\mu,x}$, with $\tilde B_\mu=\tilde B_{\al,\bt,\mu}$ given by the smooth $L^\infty$-function
\[\tilde B_\mu(t,x) := 2\arctan \Big(\frac{G_\mu}{F_\mu}\Big). \]
\item For any fixed $t\in \R$, we have $ (\tilde B_\mu)_t$ well-defined in the Schwartz class, satisfiying
\be\label{2ndga}
B_{\mu,4x} + \tilde B_{\mu,t} + 10(\mu + B_\mu)^2B_{\mu,xx} + 10(\mu + B_\mu)B_{\mu,x}^2 + 6(10\mu^3B_\mu^2 + 10\mu^2B_\mu^3 + 5\mu B_\mu^4 + B_\mu^5) = 0.
\ee
\een
\end{lem}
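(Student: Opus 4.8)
The first assertion of item (1), $B_\mu=\tilde B_{\mu,x}$, is literally the definition \eqref{5BreG}, so the content is that $\tilde B_\mu$ is a smooth, bounded function. The plan is to reduce everything to the non-degeneracy of the denominator, namely $F_\mu(t,x)^2+G_\mu(t,x)^2>0$ for all $(t,x)$. This is a direct inspection of the explicit formulas, using the standing hypothesis $\Delta=\alpha^2+\beta^2-4\mu^2>0$, and proceeds exactly as for the classical Gardner breather (cf.\ \cite[Appendix A]{AM1} and \cite{Ale11}); indeed $F_\mu\ge \cosh(\beta y_2)-\tfrac{2\mu|\beta|}{|\alpha|\sqrt{\Delta}}$, so $F_\mu>0$ already when $2\mu<|\alpha|$, and in the remaining parameter range one checks that $F_\mu$ and $G_\mu$ cannot vanish simultaneously. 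Granting this, $\arctan(G_\mu/F_\mu)$ is to be read as a continuous branch of $\arg(F_\mu+iG_\mu)$, hence is $C^\infty$, and $\tilde B_{\mu,x}=2\,(F_\mu G_{\mu,x}-G_\mu F_{\mu,x})/(F_\mu^2+G_\mu^2)$; the leading exponential asymptotics of $F_\mu,G_\mu$ as $x\to\pm\infty$ (dominated by $\cosh(\beta y_2)$, with a cancellation of the top-order terms in the numerator) show that this is a Schwartz function of $x$ for each $t$, i.e.\ $B_\mu\in\mathcal S(\R)$. Fixing the branch so that $\tilde B_\mu\to 0$ as $x\to-\infty$ (say $\beta>0$), we then have $\tilde B_\mu(t,x)=\int_{-\infty}^x B_\mu(t,x')\,dx'$, which is bounded because $B_\mu$ is integrable; this proves (1).

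For item (2), the key algebraic fact is that along $u=B_\mu=\tilde B_{\mu,x}$ the equation \eqref{5G} is an exact $x$-derivative:
\[\partial_x\Big[\tilde B_{\mu,t}+B_{\mu,4x}+10(\mu+B_\mu)^2B_{\mu,xx}+10(\mu+B_\mu)B_{\mu,x}^2+6\big(10\mu^3B_\mu^2+10\mu^2B_\mu^3+5\mu B_\mu^4+B_\mu^5\big)\Big]=0 .\]
Verifying this is a routine re-bracketing: $u_t=\partial_x\tilde B_{\mu,t}$ and $u_{5x}=\partial_x B_{\mu,4x}$, while $\partial_x\big[10(\mu+u)^2u_{xx}+10(\mu+u)u_x^2\big]$ reproduces exactly the block $10\mu^2u_{3x}+20\mu uu_{3x}+10u^2u_{3x}+40\mu u_xu_{xx}+40uu_xu_{xx}+10u_x^3$, and $\partial_x\big[6(10\mu^3u^2+10\mu^2u^3+5\mu u^4+u^5)\big]$ reproduces $120\mu^3uu_x+180\mu^2u^2u_x+120\mu u^3u_x+30u^4u_x$. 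Since $B_\mu$ solves \eqref{5G}, the bracket depends on $t$ alone, say it equals $c(t)$.

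To see $c(t)\equiv 0$ I would pass to the spatial limit $x\to-\infty$ (with $\beta>0$, so $\beta y_2\to-\infty$): every term built from $B_\mu$ and its $x$-derivatives vanishes since $B_\mu\in\mathcal S(\R)$, and $\tilde B_{\mu,t}=2\,(F_\mu G_{\mu,t}-G_\mu F_{\mu,t})/(F_\mu^2+G_\mu^2)\to 0$ as well, because there $G_\mu,G_{\mu,t}$ stay bounded while $F_\mu,F_{\mu,t}$ grow like $\cosh(\beta y_2)$, so the numerator is one exponential order below the denominator. (Equivalently, the boundary values $\tilde B_\mu(t,\pm\infty)$ are constants independent of $t$, consistent with the computation of $M[B_\mu]$ above.) Hence $c(t)=0$, which is precisely \eqref{2ndga}. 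Finally, rewriting \eqref{2ndga} as $\tilde B_{\mu,t}=-\big[B_{\mu,4x}+10(\mu+B_\mu)^2B_{\mu,xx}+10(\mu+B_\mu)B_{\mu,x}^2+6(10\mu^3B_\mu^2+10\mu^2B_\mu^3+5\mu B_\mu^4+B_\mu^5)\big]$ exhibits $(\tilde B_\mu)_t$ as a fixed polynomial in $B_\mu,B_{\mu,x},B_{\mu,xx},B_{\mu,4x}$, each of which is Schwartz in $x$; therefore $(\tilde B_\mu)_t$ belongs to the Schwartz class, completing (2). If convenient, Lemma \ref{matsuno1} shortens the asymptotic bookkeeping, since it identifies $\partial_x^2\log(F_\mu^2+G_\mu^2)$ with $B_\mu^2+2\mu B_\mu$, already seen to be Schwartz.

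The only genuinely delicate point is the non-degeneracy $F_\mu^2+G_\mu^2>0$ and the attendant exponential asymptotics of $F_\mu,G_\mu,F_{\mu,t},G_{\mu,t}$ as $x\to\pm\infty$; everything after that is symbolic manipulation. I expect the positivity statement to be the main obstacle and would import it from the corresponding computation for the classical Gardner breather in \cite[Appendix A]{AM1}, the hypothesis $\Delta>0$ being exactly what makes it work.
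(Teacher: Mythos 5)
Your proposal is correct and follows the same route as the paper, whose proof is just the two-line remark that item (1) is the definition of $B_\mu$ and item (2) is obtained by integrating \eqref{5G} in space from $-\infty$ to $x$. You simply spell out the details the paper leaves implicit (the exact-derivative re-bracketing, the vanishing of the integration constant via the decay as $x\to-\infty$, and the non-degeneracy $F_\mu^2+G_\mu^2>0$), and these checks are all sound.
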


\begin{proof}
The first item above is a direct consequence of the definition of $B_\mu$ in \eqref{5BreG}. On the other hand, \eqref{2ndga} is a consequence of (\ref{5G}) and integration in space (from $-\infty$ to $x$) of (\ref{5G}). 
\end{proof}

Finally, we show that breather solutions \eqref{5BreG} of  the  5th order Gardner equation \eqref{5G} satisfy the following  identity:

\begin{lem}\label{Iddificil}
Let $B_\mu\equiv B_{\mu,5}$ be the  breather solution \eqref{5BreG} of  the  5th order Gardner equation \eqref{5G}. Then, for all $t\in \R$, 
\be\label{ide1nvbc}
 \tilde B_{\mu,t} = (\al^2 + \bt^2)^2 B_\mu +   2\big(\al^2 - \bt^2 - 5\mu^2\big)\big(B_{\mu,xx} + 2B_\mu^3 + 6\mu B_\mu^2\big).
\ee
\end{lem}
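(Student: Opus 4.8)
The plan is to exploit that $B_\mu$ and $\tilde B_\mu$ depend on $(t,x)$ only through the two phases $y_1=x+\delta_5 t+x_1$ and $y_2=x+\gamma_5 t+x_2$. Writing $\Psi_j:=\partial_{y_j}\tilde B_\mu$ for the partial derivatives taken with $y_1,y_2$ regarded as independent, the chain rule immediately gives the two relations
\[
\Psi_1+\Psi_2=\tilde B_{\mu,x}=B_\mu,\qquad \tilde B_{\mu,t}=\delta_5\Psi_1+\gamma_5\Psi_2 .
\]
Hence $\tilde B_{\mu,t}$ is determined once we produce a third linear relation expressing some other combination $\delta_2\Psi_1+\gamma_2\Psi_2$ through $B_\mu$ and its $x$–derivatives, and once $(\delta_5,\gamma_5)$ is recognized as an affine combination of $(1,1)$ and $(\delta_2,\gamma_2)$; then \eqref{ide1nvbc} follows by a single linear combination.

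For the third relation I would use that the same spatial profile $\tilde B_\mu$ is also the breather of the ordinary (second order) Gardner equation $u_t+u_{3x}+6\mu^2u_x+6\mu(u^2)_x+2(u^3)_x=0$ — the image of mKdV under $v=\mu+u$ — whose two phases travel at $\delta_2:=\alpha^2-3\beta^2-6\mu^2$ and $\gamma_2:=3\alpha^2-\beta^2-6\mu^2$; this is classical, cf.\ \cite{Ale11} and the computation in \cite[Appendix~A]{AM1}. Integrating that equation once over $(-\infty,x]$ and using the spatial decay of $B_\mu$ (only $B_\mu=\tilde B_{\mu,x}$ and its derivatives enter, so the possibly non-vanishing limit of $\tilde B_\mu$ at $+\infty$ is harmless) yields
\[
\delta_2\Psi_1+\gamma_2\Psi_2=-\bigl(B_{\mu,xx}+6\mu^2B_\mu+6\mu B_\mu^2+2B_\mu^3\bigr).
\]
A direct check with \eqref{speedsnthG} shows $\delta_5=a+b\,\delta_2$ and $\gamma_5=a+b\,\gamma_2$ with $b=-2(\alpha^2-\beta^2-5\mu^2)$ and $a=(\alpha^2+\beta^2)^2-12\mu^2(\alpha^2-\beta^2-5\mu^2)$, and moreover $a-6\mu^2 b=(\alpha^2+\beta^2)^2$. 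Adding $a$ times the first relation to $b$ times the third and substituting into $\tilde B_{\mu,t}=\delta_5\Psi_1+\gamma_5\Psi_2$ then gives $\tilde B_{\mu,t}=a\,B_\mu-b\bigl(B_{\mu,xx}+6\mu^2B_\mu+6\mu B_\mu^2+2B_\mu^3\bigr)$, which is exactly \eqref{ide1nvbc}.

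The only genuinely delicate point is the algebraic identity $(\delta_5,\gamma_5)=a(1,1)+b(\delta_2,\gamma_2)$: it is a short but unforgiving polynomial computation in $\alpha,\beta,\mu$, it is what pins down the $-6\mu^2$ shift in $\delta_2,\gamma_2$, and it is the only place where the precise shape of the fifth–order speeds \eqref{speedsnthG} is used — the remainder is a one–line linear combination. An alternative that stays entirely inside this paper is to insert \eqref{2ndga} for $\tilde B_{\mu,t}$ and verify the resulting purely spatial identity in $x$ directly from the frequency relations $\partial_{y_1}^2=-\alpha^2$ on the $\alpha y_1$–oscillatory parts and $\partial_{y_2}^2=\beta^2$ on the $\beta y_2$–hyperbolic parts of $G_\mu,F_\mu$, together with Lemma~\ref{matsuno1}; this is conceptually the same identity but computationally heavier, and I would favour the three–relation route above.
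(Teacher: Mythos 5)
Your argument is correct, but it takes a genuinely different route from the paper. The paper's proof (Appendix \ref{apIddificil}) is a direct verification: it writes $B_\mu$, $\tilde B_{\mu,t}$, $B_{\mu,xx}$, $B_\mu^2$, $B_\mu^3$ as rational expressions in $f=F_\mu$, $g=G_\mu$ and their derivatives, assembles the right-hand side of \eqref{ide1nvbc} into a single quotient $M_2/D^3$, and checks $M_2=2D^2(g_2f-gf_2)$ by expanding everything with \emph{Mathematica}. You instead use the two-phase structure: $\Psi_1+\Psi_2=B_\mu$ and $\tilde B_{\mu,t}=\delta_5\Psi_1+\gamma_5\Psi_2$, plus the integrated classical Gardner equation satisfied by the same profile with speeds $\delta_2=\alpha^2-3\beta^2-6\mu^2$, $\gamma_2=3\alpha^2-\beta^2-6\mu^2$ (which do follow from the continuation $c\mapsto(\beta+i\alpha)^2$ of the Gardner soliton speed $c+6\mu^2$), and then reduce \eqref{ide1nvbc} to the $2\times2$ linear-algebra fact $(\delta_5,\gamma_5)=a(1,1)+b(\delta_2,\gamma_2)$. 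I verified this last identity with your $a,b$, as well as $a-6\mu^2b=(\alpha^2+\beta^2)^2$, and the resulting combination is exactly \eqref{ide1nvbc}. Your route is shorter and more explanatory — it exhibits $(\alpha^2+\beta^2)^2$ and $2(\alpha^2-\beta^2-5\mu^2)$ as the coordinates of the fifth-order speeds in the basis $\{(1,1),(\delta_2,\gamma_2)\}$ — at the cost of importing from \cite{Ale11, AM1} that the profile \eqref{5BreG}, run with the second-order speeds, solves the classical Gardner equation; the paper's computation is self-contained apart from the symbolic software. If you write this up, make two points explicit: (i) the third relation is an identity in the independent variables $(y_1,y_2)$, legitimately transferable to the fifth-order characteristics because the classical flow sweeps out all of $\R^2$ ($\gamma_2-\delta_2=2(\alpha^2+\beta^2)\neq0$); and (ii) the integration constant vanishes since $\tilde B_\mu\to0$ as $x\to-\infty$, the same normalization used for \eqref{2ndga}.
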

\begin{proof} 
See appendix \ref{apIddificil} for a detailed proof of this nonlinear identity.
\end{proof}

Now, we prove that breather solutions \eqref{5BreG} of  the  5th order Gardner equation \eqref{5G} satisfy a fourth order ODE, which indeed is the same as the one satisfied by \emph{classical} Gardner breather solutions (see \cite[Theorem 3.5]{AM1} for further details)

\begin{thm}\label{GBGE} Let $B_\mu\equiv B_{\mu,5}$ be the  breather solution \eqref{5BreG} of  the  5th order Gardner equation \eqref{5G}. Then, for any  fixed $t\in \R$, $B_\mu$  satisfies the nonlinear stationary equation  
\be\label{EcBGEfinal}
\begin{aligned}
\mathcal W (B_{\mu}) := & B_{\mu,4x}  -   2(\beta^2 -\alpha^2) (B_{\mu,xx} + 6\mu B_\mu^2 + 2B_\mu^3) +  (\alpha^2 + \beta^2)^2 B_\mu + 10 B_\mu B_{\mu,x}^2 + 10B_\mu^2 B_{\mu,xx} \\
&+ 6 B_\mu^5  + 10\mu B_{\mu,x}^2  + 20\mu B_\mu B_{\mu,xx} + 40\mu^2B_\mu^3 + 30 \mu B_\mu^4  =0.
\end{aligned}
\ee
\end{thm}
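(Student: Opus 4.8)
The plan is to obtain \eqref{EcBGEfinal} by eliminating the time derivative $\tilde B_{\mu,t}$ between the two nonlinear identities already available for the breather: equation \eqref{2ndga} of Lemma~\ref{identitiesB5}, which expresses $\tilde B_{\mu,t}$ through $B_{\mu,4x}$ together with the full fifth-order Gardner nonlinearity, and equation \eqref{ide1nvbc} of Lemma~\ref{Iddificil}, which expresses the \emph{same} quantity through $B_\mu$, $B_{\mu,xx}$ and the lower-order (``classical Gardner'') nonlinearity. Setting the two expressions for $\tilde B_{\mu,t}$ equal produces a closed fourth-order stationary equation for $B_\mu$ with no remaining $t$-dependence, and the only thing left is to check that this equation is precisely $\mathcal W(B_\mu)=0$.

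Concretely, I would first solve \eqref{2ndga} for $\tilde B_{\mu,t}$,
\[
\tilde B_{\mu,t} = -B_{\mu,4x} - 10(\mu+B_\mu)^2 B_{\mu,xx} - 10(\mu+B_\mu)B_{\mu,x}^2 - 60\mu^3 B_\mu^2 - 60\mu^2 B_\mu^3 - 30\mu B_\mu^4 - 6B_\mu^5,
\]
and then equate this with the right-hand side of \eqref{ide1nvbc}. Expanding $10(\mu+B_\mu)^2B_{\mu,xx} = 10\mu^2 B_{\mu,xx} + 20\mu B_\mu B_{\mu,xx} + 10 B_\mu^2 B_{\mu,xx}$, expanding $10(\mu+B_\mu)B_{\mu,x}^2 = 10\mu B_{\mu,x}^2 + 10 B_\mu B_{\mu,x}^2$, and rewriting the block from \eqref{ide1nvbc} as
\[
2(\alpha^2-\beta^2-5\mu^2)(B_{\mu,xx}+6\mu B_\mu^2 + 2B_\mu^3) = -2(\beta^2-\alpha^2)(B_{\mu,xx}+6\mu B_\mu^2+2B_\mu^3) - 10\mu^2 B_{\mu,xx} - 60\mu^3 B_\mu^2 - 20\mu^2 B_\mu^3,
\]
the $10\mu^2 B_{\mu,xx}$ terms cancel, the $60\mu^3 B_\mu^2$ terms cancel, the cubic-in-$B_\mu$ contribution becomes $-20\mu^2 B_\mu^3 + 60\mu^2 B_\mu^3 = 40\mu^2 B_\mu^3$, and collecting the surviving terms $B_{\mu,4x}$, $(\alpha^2+\beta^2)^2 B_\mu$, $10 B_\mu B_{\mu,x}^2$, $10 B_\mu^2 B_{\mu,xx}$, $6 B_\mu^5$, $10\mu B_{\mu,x}^2$, $20\mu B_\mu B_{\mu,xx}$, $30\mu B_\mu^4$ together with the $-2(\beta^2-\alpha^2)(\,\cdot\,)$ block reproduces $\mathcal W(B_\mu)$ exactly. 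Since Lemma~\ref{identitiesB5}(1) guarantees $B_\mu = \tilde B_{\mu,x}$ with $\tilde B_\mu$ smooth and $B_\mu$ Schwartz in $x$ for each fixed $t$, every one of these manipulations is a pointwise identity, so nothing further is needed.

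Thus the proof of Theorem~\ref{GBGE} itself is routine algebra; the genuine content sits upstream, in Lemma~\ref{Iddificil}. The hard part — carried out in Appendix~\ref{apIddificil} — is verifying \eqref{ide1nvbc} directly from the explicit $\arctan(G_\mu/F_\mu)$ representation: one differentiates in $t$, uses the specific velocities $\delta_5,\gamma_5$ of \eqref{speedsnthG} to trade the $t$-derivatives of $y_1,y_2$ for $x$-derivatives, and then collapses the resulting rational expression using the $G_\mu^2+F_\mu^2$ structure of Lemma~\ref{matsuno1}, exactly as for the classical Gardner breather in \cite[Theorem~3.5]{AM1} but with the fifth-order dispersion relation in place of the third-order one. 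Given \eqref{2ndga} and \eqref{ide1nvbc}, I do not anticipate any obstruction in assembling the statement.
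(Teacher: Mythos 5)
Your proposal is correct and follows essentially the same route as the paper: the paper substitutes \eqref{2ndga} into $\mathcal W(B_\mu)$ to replace $B_{\mu,4x}$ and then invokes \eqref{ide1nvbc}, which is algebraically the same elimination of $\tilde B_{\mu,t}$ between the two lemmas that you perform, and your bookkeeping of the cancellations (the $10\mu^2 B_{\mu,xx}$ and $60\mu^3 B_\mu^2$ terms, and the net $40\mu^2 B_\mu^3$) checks out. You also correctly identify that the real work lives in Lemma \ref{Iddificil} and its Appendix \ref{apIddificil} verification, not in the theorem itself.
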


%

\begin{proof}
 We use the identity \eqref{2ndga} to substitute the $B_{\mu,4x}$ term in the left-hand side of \eqref{EcBGEfinal}, simplifying it as:
\[\begin{aligned}
\mathcal W (B_{\mu})=& -\left(\tilde B_{\mu,t} + 10(\mu + B_\mu)^2B_{\mu,xx} + 10(\mu + B_\mu)B_{\mu,x}^2 + 6(10\mu^3B_\mu^2 + 10\mu^2B_\mu^3 + 5\mu B_\mu^4 + B_\mu^5)\right)\\
&- 2(\beta^2 -\alpha^2) (B_{\mu,xx} + 6\mu B_\mu^2 + 2B_\mu^3) +  (\alpha^2 + \beta^2)^2 B_\mu + 10 B_\mu B_{\mu,x}^2 + 10B_\mu^2 B_{\mu,xx}\\
&+ 6 B_\mu^5  + 10\mu B_{\mu,x}^2  + 20\mu B_\mu B_{\mu,xx} + 40\mu^2B_\mu^3 + 30 \mu B_\mu^4\\
=& - \tilde B_t + (\al^2 + \bt^2)^2 B_\mu +   2\Big(\al^2 - \bt^2 - 5\mu^2\Big)(B_{\mu,xx} + 2B_\mu^3 + 6\mu B_\mu^2) =0,
\end{aligned}\]
where in the last line we have used the identity \eqref{ide1nvbc}.
\end{proof}
Note that being the shift parameters $x_1,x_2$ in \eqref{5BreG} selected as independents of time, a simple argument guarantees that the previous Theorem \ref{GBGE} still holds under time dependent, translation parameters $x_1(t)$ and $x_2(t)$.

\begin{cor}\label{Cor32}
Let $B^0_{\mu} \equiv B^0_{\al,\bt,\mu}(t,x; 0,0)$ be any Gardner breather as in \eqref{5BreG}, 
and $x_1(t),$ $x_2(t)\in \R$ two continuous functions, defined for all $t$ in a given interval. Consider the modified breather
\[
 B_{\mu} (t,x):=B^0_{\al,\bt,\mu}(t,x; x_1(t),x_2(t)), \qquad (\hbox{cf. \eqref{5BreG}}). 
\]
Then $B_{\mu} $ satisfies \eqref{EcBGEfinal}, for all $t$ in the given interval.
\end{cor}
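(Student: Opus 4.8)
The plan is to observe that Corollary \ref{Cor32} is essentially a restatement of Theorem \ref{GBGE} with the constant shift parameters $x_1,x_2$ replaced by time-dependent ones $x_1(t),x_2(t)$, and that the proof of Theorem \ref{GBGE} is purely \emph{stationary} — it fixes $t$ and treats $B_\mu(t,\cdot)$ as a function of $x$ alone. First I would note the structural point already flagged in the remark immediately following Theorem \ref{GBGE}: the equation \eqref{EcBGEfinal} $\mathcal W(B_\mu)=0$ involves \emph{only spatial derivatives} of $B_\mu$, so for each fixed $t$ it is an identity among the functions $x\mapsto B_\mu(t,x)$ and their $x$-derivatives. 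Hence it suffices to check that, for each fixed $t$ in the given interval, the profile $x\mapsto B^0_{\al,\bt,\mu}(t,x;x_1(t),x_2(t))$ coincides with the profile $x\mapsto B^0_{\al,\bt,\mu}(t',x;x_1,x_2)$ of some genuine breather (with \emph{constant} shifts) evaluated at a suitable time $t'$.

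The key step is the reparametrization. Looking at the definition \eqref{5BreG}, the breather depends on $(t,x,x_1,x_2)$ only through the combinations $y_1 = x+\delta_5 t + x_1$ and $y_2 = x+\gamma_5 t + x_2$. Therefore, for fixed $t$, the map $x\mapsto B^0_{\al,\bt,\mu}(t,x;x_1(t),x_2(t))$ is exactly the map $x\mapsto B^0_{\al,\bt,\mu}(\,\cdot\,,x;\tilde x_1,\tilde x_2)$ with $\tilde x_1 := \delta_5 t + x_1(t)$ and $\tilde x_2 := \gamma_5 t + x_2(t)$ (and the time argument set to $0$), since both produce the same pair $(y_1,y_2)$ as functions of $x$. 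In other words, a spatial translation of a breather profile is again a breather profile with shifted constant parameters. Now Theorem \ref{GBGE} applies to $B^0_{\al,\bt,\mu}(0,x;\tilde x_1,\tilde x_2)$ — constant shifts $\tilde x_1,\tilde x_2\in\R$ — and yields $\mathcal W(B^0_{\al,\bt,\mu}(0,\cdot;\tilde x_1,\tilde x_2))=0$. Since $\mathcal W$ only sees the $x$-profile, we conclude $\mathcal W(B_\mu(t,\cdot))=0$ at that fixed $t$.

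Finally I would let $t$ range over the given interval: the above argument is valid for each fixed $t$ with no compatibility condition needed between different times (the equation \eqref{EcBGEfinal} is stationary, so there is no issue of $t$-derivatives of the continuous functions $x_1(t),x_2(t)$, and in particular no differentiability of $x_i(t)$ is required — only that $B_\mu(t,\cdot)$ is, for each such $t$, one of the admissible profiles). Hence $\mathcal W(B_\mu)=0$ for all $t$ in the interval, which is the claim.

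The main (and only real) obstacle is conceptual rather than computational: one must be careful that Theorem \ref{GBGE} was proved using the two nonlinear identities in Lemma \ref{identitiesB5} and Lemma \ref{Iddificil}, and these were stated for breathers with constant shift parameters. I would therefore want to double-check that those lemmas are likewise \emph{translation-covariant in $x$} — i.e. that $\tilde B_{\mu,t}$ appearing in \eqref{2ndga} and \eqref{ide1nvbc} transforms correctly under $x_i \mapsto x_i + \text{const}$. This is immediate once one uses that $\tilde B_\mu$ too depends on $(t,x,x_1,x_2)$ only through $(y_1,y_2)$: the $t$-derivative at fixed $(x_1,x_2)$ of a function of $y_1=x+\delta_5 t+x_1$, $y_2=x+\gamma_5 t+x_2$ is unchanged if we absorb the velocity contributions into new constant shifts. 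So the identities persist, Theorem \ref{GBGE} applies to the reparametrized constant-shift breather, and the corollary follows.
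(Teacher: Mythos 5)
Your argument is correct and is essentially the paper's own proof, which simply invokes the invariance of the stationary equation \eqref{EcBGEfinal} under spatial translations (equivalently, the fact that Theorem \ref{GBGE} already holds for arbitrary constant shifts $x_1,x_2$, so at each fixed $t$ one may take $x_1=x_1(t)$, $x_2=x_2(t)$). Your additional check that the identities of Lemmas \ref{identitiesB5} and \ref{Iddificil} depend on $(t,x,x_1,x_2)$ only through $(y_1,y_2)$ is a harmless elaboration of the same point.
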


\begin{proof}
From the invariance of the equation \eqref{EcBGEfinal} under spatial translations, we conclude.
\end{proof}
Even more, we can characterize variationally these breather solutions of the 5th order Gardner equation. Explicitly, considering the $H^2(\R)$ conserved quantity  \eqref{E5}
\begin{equation}\label{E5a}
 E_{5\mu}[u](t)   :=    \int_\R \left(\frac 12 u_{xx}^2 - 10\mu  uu^2_x + 10\mu^2 u^4  - 5 u^2u_x^2 + 6\mu  u^5 + u^6 \right)\; dx,
\end{equation}
we can introduce a $H^2$ functional, associated to the breather solution. Namely, we define this functional as a linear combination of the energy \eqref{E1}, the mass \eqref{M1cor} and \eqref{E5a} in the following way 
\be\label{LyapunovGE}
\mathcal H_\mu[u](t) := E_{5\mu}[u](t) + 2(\bt^2-\al^2)E_\mu[u](t) + (\al^2 +\bt^2)^2 M[u](t).
\ee
Therefore, $\mathcal H_\mu[u]$ is  a  conserved quantity, well-defined for $H^2$-solutions of \eqref{5G}. Additionally, we have that
\begin{lem}\label{crit}
Breather solutions $B_\mu$ \eqref{5BreG} of the 5th order Gardner equation \eqref{5G} are critical points of the Lyapunov functional $\mathcal H_\mu$ \eqref{LyapunovGE}. In fact, for any $z\in H^2(\R)$  with sufficiently small $H^2$-norm, and $B_\mu=B_{\al,\bt,\mu}$  any 5th Gardner breather solution, one has, for all $t\in \R$, that  
\[\mathcal{H}_\mu[B_\mu +z] - \mathcal{H}_\mu[B_\mu]  = \frac 12\mathcal Q_\mu[z] + \mathcal N_\mu[z],\]
with $\mathcal Q_\mu$ being the quadratic form defined in \eqref{Qmu} below, and $\mathcal N_\mu[z]$ satisfying $|\mathcal N_\mu[z] | \leq K\|z\|_{H^2(\R)}^3$.
\end{lem}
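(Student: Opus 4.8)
The plan is to expand $\mathcal{H}_\mu[B_\mu + z]$ to second order in $z$ using the explicit polynomial form of $E_{5\mu}$, $E_\mu$, and $M$, and then to identify the first-order term (in $z$) as zero by invoking the stationary equation \eqref{EcBGEfinal} satisfied by $B_\mu$. First I would write each of the three conserved functionals as $\mathcal{F}[B_\mu + z] = \mathcal{F}[B_\mu] + \langle \mathcal{F}'[B_\mu], z\rangle + \tfrac12\langle \mathcal{F}''[B_\mu]z, z\rangle + (\text{cubic and higher})$, using the fact that $E_{5\mu}$ is a polynomial density of degree $6$ (so the expansion truncates with a cubic-through-sextic remainder), $E_\mu$ has degree $4$ (remainder cubic and quartic), and $M$ is exactly quadratic. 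The linear term of $\mathcal{H}_\mu$ is then $\langle \mathcal{G}(B_\mu), z\rangle$ where $\mathcal{G}(B_\mu) = E_{5\mu}'[B_\mu] + 2(\beta^2-\alpha^2)E_\mu'[B_\mu] + (\alpha^2+\beta^2)^2 M'[B_\mu]$; a direct computation of these three Euler--Lagrange derivatives (integrating by parts to move all derivatives off $z$) shows $\mathcal{G}(B_\mu)$ equals precisely $\mathcal{W}(B_\mu)$ from Theorem \ref{GBGE}, up to an overall constant, which vanishes. This is the conceptual heart of the argument and the reason the variational characterization works.

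Next I would collect the quadratic term, defining $\mathcal{Q}_\mu[z] := \langle \mathcal{H}_\mu''[B_\mu]z, z\rangle$, which after integration by parts takes the form of an integral of $z_{xx}^2$ plus lower-order terms with $B_\mu$-dependent (bounded, smooth, exponentially localized) coefficients; this is the quadratic form referenced as \eqref{Qmu}. I would simply record its expression and defer its spectral analysis (coercivity modulo the kernel generated by the symmetries) to the subsequent part of Section \ref{sect5}. Everything of cubic order or higher is lumped into $\mathcal{N}_\mu[z]$.

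For the bound $|\mathcal{N}_\mu[z]| \le K\|z\|_{H^2}^3$, the point is that every term in $\mathcal{N}_\mu[z]$ is an integral of a monomial in $z$ and its derivatives (up to two derivatives, matching the $H^2$ density) of total degree at least $3$, with coefficients that are smooth bounded functions of $B_\mu$; the highest-order factors $z_{xx}$ appear at most quadratically (from the $\tfrac12 u_{xx}^2$ term, which is exactly quadratic and hence contributes nothing to $\mathcal{N}_\mu$), so in $\mathcal{N}_\mu$ one has at most one factor of $z_{xx}$, at most two factors of $z_x$, and the rest factors of $z$. Using the Sobolev embedding $H^1(\R)\hookrightarrow L^\infty(\R)$ to place the low-order factors in $L^\infty$ and Cauchy--Schwarz on the remaining two factors, each such term is bounded by $C\|z\|_{H^2}^3(1 + \|z\|_{H^2})^{j}$ for some $j$, and since $\|z\|_{H^2}$ is assumed small this gives the claimed cubic bound. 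The main obstacle is the bookkeeping in verifying that $\mathcal{G}(B_\mu) = \mathcal{W}(B_\mu)$: one must carefully match the Euler--Lagrange derivatives of the degree-$6$, degree-$4$, and degree-$2$ densities term by term against the ten terms of $\mathcal{W}$, being attentive to the factors introduced by differentiating $u^{p}$ and by integrating $u^2 u_{xx}$, $uu_x^2$, etc., by parts — but this is a finite, mechanical computation and, by Theorem \ref{GBGE}, it is guaranteed to close.
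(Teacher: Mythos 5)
Your proposal follows essentially the same route as the paper: expand the three polynomial conserved densities around $B_\mu$, identify the first variation of $\mathcal H_\mu$ with $\int_\R \mathcal W(B_\mu)\,z$, kill it via the stationary equation \eqref{EcBGEfinal} from Theorem \ref{GBGE}, read off the quadratic form $\mathcal Q_\mu$ from the second variation, and bound the remainder by $H^1(\R)\hookrightarrow L^\infty(\R)$ plus Cauchy--Schwarz. This matches the paper's proof; the only cosmetic difference is that in the paper's $\mathcal N_\mu[z]$ no factor of $z_{xx}$ appears at all (the sole $u_{xx}$-term in the density, $\tfrac12 u_{xx}^2$, is exactly quadratic), which only makes your estimate easier.
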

\begin{proof}
A direct computation with the integration by parts yields
\[\begin{aligned}
E_{5\mu}[B_\mu +z] = &~{} E_{5\mu}[B_\mu +z] \\
&+ \int \left(B_{\mu, 4x} + 10 \mu B_{\mu, x}^2 + 20\mu B_{\mu}B_{\mu, xx} +10B_{\mu}B_{\mu, x}^2 + 10B_{\mu}^2B_{\mu,x} + 40\mu^2 B_{\mu}^3 + 30\mu B_{\mu}^4 + 6B_{\mu}^5 \right) z\\
&+ \frac12 \int \Big(\px^4 + (20\mu B_{\mu} + 10B_{\mu}^2)\px^2 - 20(\mu B_{\mu, x} + B_{\mu}B_{\mu,x}) \px \\
& \qquad \qquad + (-10B_{\mu, x}^2 + 120\mu^2 B_{\mu}^2 +120 \mu B_{\mu}^3+ 30B_{\mu}^4) \Big) z \cdot z \\
&+\int \left(-10 \mu z z_x^2 - 10 B_{\mu} z z_x - 10 B_{\mu, x}z_x z^2 + 40\mu^2 B_{\mu}z^3 + 60 \mu B_{\mu}^2 z^3 +20 B_{\mu}^3 z^3 \right),
\end{aligned}\]
\[E_\mu[B_{\mu} + z] = E_{\mu}[B_{\mu}]  - \int ( B_{\mu, xx} + 6\mu B_{\mu}^2 + 2 B_{\mu}^3 ) z - \frac12\int (\px^2 + 12\mu B_{\mu} + 6B_{\mu}^2 )z \cdot z - \int \left(2\mu z^3 + 2 B_{\mu} z^3 + \frac12 z^4 \right)\]
and
\[M[B_{\mu} + z] = M[B_{\mu}] + \int B_{\mu} z +  \frac12 \int z \cdot z.\]
Collecting all, one obtain
\[
\mathcal{H}_\mu[B_\mu+z]   =  \mathcal{H_\mu}[B_\mu] + \int_\R \mathcal W (B_{\mu}) z + \frac 12\mathcal Q_\mu[z] + \mathcal N_\mu[z],
\]
where the quadratic form 
\be\label{Qmu}
\mathcal Q_\mu [z] := \int \mathcal L_\mu z \cdot z,
\ee
associated to the linearized operator $\mathcal L_\mu$ given by
\begin{equation}\label{linearized operator}
\begin{aligned}
\mathcal L_{\mu} :=&~{} \px^4 + (20 \mu B_{\mu} + 10 B_{\mu}^2 - 2(\beta^2 - \alpha^2)) \px^2 - 20(\mu B_{\mu, x} + B_{\mu}B_{\mu,x}) \px  \\
&+ \left(-10B_{\mu, x}^2 + 120\mu^2 B_{\mu}^2 +120 \mu B_{\mu}^3+ 30B_{\mu}^4  - 2(\beta^2 - \alpha^2)(12\mu B_{\mu} + 6B_{\mu}^2) + (\alpha^2 + \beta^2)^2\right)
\end{aligned}
\end{equation}
%
%
%
%
and the collection of higher order terms (with respect to $z$) $N_\mu[z]$ is given by 
\[\begin{aligned}
\mathcal N_\mu[z] :=&~{} \int \left(-10 \mu z z_x^2 - 10 B_{\mu} z z_x - 10 B_{\mu, x}z_x z^2 + 40\mu^2 B_{\mu}z^3 +60 \mu B_{\mu}^2 z^3 +20 B_{\mu}^3 z^3 \right) \\
&-2(\beta^2 - \alpha^2)\int \left(2\mu z^3 + 2 B_{\mu} z^3 + \frac12 z^4 \right).
\end{aligned}\]
Theorem \ref{GBGE} ensures $\int_\R \mathcal W (B_{\mu}) z = 0$, and hence one has $\mathcal{H_\mu}'[B_\mu] = 0$. Moreover, from direct estimates, one has $\mathcal N_\mu[z] = O(\|z\|_{H^2(\R)}^3),$ as desired.
%
%
%
%
\end{proof}

\subsection{Spectral analysis}\label{3}
%
As a direct consequence of the already studied  spectral properties  of the linearized operator $\mathcal L_\mu $, associated to the \emph{classical} Gardner breather solution $B_\mu$,  
in \cite{AM1}, we obtain the same spectral results for  breather solutions  of the 5th order Gardner equation \eqref{5G}. In fact, 
all statements on spectral properties and the main Theorem in \cite{AM1} are valid for the 5th order Gardner equation \eqref{5G}, 
even if explicit coefficients are different. Therefore in the following lines and for the sake of completeness, 
we only summarize and list the main features of $\mathcal L_{\mu}$  \eqref{linearized operator}. Let $B_{\mu}$ as introduced in \eqref{5BreG}. Consider now the two directions associated to spatial translations. We define 
\be\label{B12}
 B_1(t ; x_1,x_2) := \partial_{x_1} B_{\mu}(t ; x_1, x_2),\quad \hbox{ and } \quad  B_2(t ; x_1,x_2): =\partial_{x_2} B_{\mu}(t ;  x_1, x_2).\\
\ee
Moreover, we compute and denote as \emph{scaling directions}, the derivatives
\begin{align}\label{DDeltaAB}
\Lambda_\al B_\mu =\frac{\partial B_\mu}{\partial \al},\quad \Lambda_\bt B_\mu = \frac{\partial B_\mu}{\partial \bt}.
\end{align}
\medskip
\noindent
We get the following (see \cite{AM1} for more details)

\medskip

\begin{lem}\label{Lspectral} For any   breather solution $B_\mu$ \eqref{5BreG} of the 5th order Gardner equation \eqref{5G},  we get that
\begin{enumerate}
 \item (\emph{Continuous spectrum}) $\mathcal L_\mu$ is a linear, unbounded operator in $L^2(\R)$, with dense domain $H^4(\R)$. Moreover, $\mathcal L_\mu$ is self-adjoint, and is a compact perturbation of the constant coefficients operator
\[
\mathcal L_{\mu,0} :=  \px^4 - 2(\bt^2 -\al^2) \px^2 + (\al^2 +\bt^2)^2 .
\]
In particular, the continuous spectrum of $\mathcal L_\mu$ is the closed interval $[(\al^2 +\bt^2)^2,+\infty)$ in the case $\beta\geq \al$, and $[ 4\al^2 \bt^2 ,+\infty)$ in the case $\beta< \al$, with no embedded eigenvalues are contained in this region.

\medskip

\item (\emph{Kernel}) For each $t\in \R$, one has
\[
\ker \mathcal L_\mu =\spawn \big\{ B_1(t;x_1,x_2), B_2(t;x_1,x_2)\big\}.
\]

\medskip

\item Consider the scaling directions $\Lambda_\al B$ and $\Lambda_\bt B$ introduced in \eqref{DDeltaAB}. Then, given $\al,\bt>0$ and $\forall\mu\in(0,\frac{\sqrt{\al^2+\bt^2}}{2})$, we have

\[\int_\R  \Lambda_\al B_\mu \, \mathcal L_\mu [\Lambda_\al B_\mu]  = 16 \al^2\bt\left[1+\frac{4\mu^2\Delta}{\Delta^2+16\mu^2\bt^2}\right]>0,\]
and
\[\int_\R  \Lambda_\bt B_\mu\, \mathcal L_\mu [\Lambda_\bt B_\mu]  =  -16 \bt\left[\al^2 + 2\mu^2\left(1+\frac{(\Delta-2\beta^2)(\alpha^2 + \beta^2 +4\mu^2)}{\Delta^2+ 16\mu^2\bt^2}
\right)\right]<0.\]

\medskip

\item Let
\[B_{0,\mu} := \frac{\al\Lambda_\bt B_\mu + \bt\Lambda_\al B_\mu}{8\al\bt(\al^2+\bt^2)}.\\\]
Then $B_{0,\mu}$ is in the Schwartz class, satisfying $\mathcal L_\mu[B_{0,\mu}] = - B_\mu$ and
\[\int_\R B_{0,\mu} B_\mu = \frac{1}{4\bt(\al^2+\bt^2)}\Big(\frac{\Delta^2+4\mu^2\Delta}{\Delta^2+16\mu^2\bt^2}\Big)>0. \quad \forall \mu\in\left(0,\frac{\sqrt{\al^2+\bt^2}}{2}\right)\]
%

\medskip

\item Let $B_1$ and $B_2$ be the kernel elements defined in \eqref{B12},  $D_\mu = F_\mu^2 + G_\mu^2$  and $W$ be the Wronskian matrix of the functions $B_1$ and $B_2$, precisely given by
\[W[B_1, B_2] (t,x) := \left[ \begin{array}{cc} B_1 & B_2 \\  (B_1)_x & (B_2)_x  \end{array} \right] (t,x).\]
Then

\[\begin{aligned}
\det W[B_1,B_2](t,x):= &~{}\frac{2\bt^3(\al^2+\bt^2)^2((\al^2+\bt^2)^2-8\mu^2(\al^2-2\mu^2))}{\Delta^3 D_\mu^2} \times \\
&\Bigg[\sinh(2\bt y_2) + \frac{8\bt^2\mu^2\cosh(2\bt y_2)}{(\al^2+\bt^2)^2-8\mu^2(\al^2-\mu^2)} - \frac{\bt\Delta((\al^2+\bt^2)^2 - 4\mu^2(\al^2-\bt^2))\sin(2\al y_1)}{\al(\al^2+\bt^2)((\al^2+\bt^2)^2 - 8\mu^2(\al^2-\mu^2))}\\
&+\frac{8\bt^2\mu^2\Delta\cos(2\al y_1)}{(\al^2+\bt^2)((\al^2+\bt^2)^2 - 8\mu^2(\al^2-2\mu^2))}\Bigg].\\
\end{aligned}\]

\medskip

\item For every $\mu\in(0,\frac{\sqrt{\al^2+\bt^2}}{2})$, the operator $\mathcal L_\mu$  defined in \eqref{linearized operator} has a 
unique negative eigenvalue $-\la_0^2<0$, of multiplicity one, where $\la_0$ depends on $\al$, $\bt$, $\mu$, $x_1$, $x_2$ and $t$.

\medskip

\item (\emph{Coercivity}) Let $\alpha, \beta > 0$ and $\mu\in(0,\frac{\sqrt{\al^2+\bt^2}}{2})$. For the quadratic from $Q_{\mu}[z]$ as in \eqref{Qmu}, associated to $\mathcal{L}_\mu$ \eqref{linearized operator},
%
there exists a well-defined and positive continuous function $\nu_0 =\nu_0(\al,\bt,\mu)$ such that, for all $z_0\in H^2(\R)$ satisfying 
\[\int_\R z_0 B_{-1} =\int_\R z_0 B_1 =\int_\R z_0 B_2 =0,\]
the following Coercivity condition holds true:
\be\label{coee} 
\mathcal Q_\mu[ z_0] \geq \nu_0\| z_0\|_{H^2(\R)}^2.
\ee
\end{enumerate}
\end{lem}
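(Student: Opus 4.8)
The plan is to reduce each item of Lemma \ref{Lspectral} to the corresponding statement established in \cite{AM1} for the \emph{classical} Gardner breather. The bridge is Theorem \ref{GBGE} together with Corollary \ref{Cor32}: at each fixed $t$, after absorbing the velocities $\de_5,\ga_5$ into the shift parameters $x_1,x_2$, the 5th order Gardner breather $B_\mu$ solves the \emph{same} stationary equation \eqref{EcBGEfinal} as the classical Gardner breather, and its spatial profile ranges over exactly the same two-parameter family. Since $\mathcal L_\mu$ in \eqref{linearized operator} is the Hessian at $B_\mu$ of the conserved functional $\mathcal H_\mu$ (equivalently the linearization of $\mathcal W$), it is literally the operator whose spectrum was analyzed in \cite{AM1}; only the numerical values of the velocities differ, not $B_\mu(t,\cdot)$ nor $\mathcal L_\mu$. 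It thus suffices to recall the structure of the proof and to flag where the explicit formula \eqref{5BreG} is used.

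For items (1), (2) and (5) I would first note that $\mathcal L_\mu$ is symmetric on $L^2(\R)$ with domain $H^4(\R)$ and essentially self-adjoint, because its principal part is $\px^4$ and all lower order coefficients are smooth, bounded polynomials in $B_\mu$ and its derivatives. Since $\mathcal L_\mu-\mathcal L_{\mu,0}$ has coefficients decaying exponentially as $|x|\to\infty$ (again polynomials in $B_\mu,B_{\mu,x},\dots$), it is $\mathcal L_{\mu,0}$-compact, so Weyl's theorem gives $\sigma_{ess}(\mathcal L_\mu)=\sigma_{ess}(\mathcal L_{\mu,0})=[\min_{\xi\in\R}p(\xi),\infty)$ with $p(\xi)=\xi^4+2(\bt^2-\al^2)\xi^2+(\al^2+\bt^2)^2$, whose minimum is $(\al^2+\bt^2)^2$ for $\bt\ge\al$ and $4\al^2\bt^2$ for $\bt<\al$; absence of embedded eigenvalues is the statement in \cite{AM1}. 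For the kernel, differentiating \eqref{EcBGEfinal} in $x_1$ and $x_2$ and using Corollary \ref{Cor32} yields $\mathcal L_\mu[B_1]=\mathcal L_\mu[B_2]=0$, so $\spawn\{B_1,B_2\}\subset\ker\mathcal L_\mu$; item (5), a direct (lengthy) computation from \eqref{5BreG}, shows $\det W[B_1,B_2]\not\equiv 0$, hence $B_1,B_2$ are independent, and the reverse inclusion is ODE theory: the characteristic roots $\pm\al\pm i\bt$ of $\mathcal L_{\mu,0}w=0$ are distinct (since $\al\bt\ne 0$) and split into two with negative and two with positive real part, giving two-dimensional decaying subspaces at each end, so $\ker\mathcal L_\mu\cap L^2$ is exactly two-dimensional.

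For items (3) and (4) the structural backbone is obtained by differentiating \eqref{EcBGEfinal} in $\al$ and $\bt$: besides $\mathcal L_\mu$ applied to the scaling direction, only the explicit dependence of the coefficients $-2(\bt^2-\al^2)$ and $(\al^2+\bt^2)^2$ contributes, giving
\[\mathcal L_\mu[\Lambda_\al B_\mu]=-4\al\big(B_{\mu,xx}+6\mu B_\mu^2+2B_\mu^3+(\al^2+\bt^2)B_\mu\big),\]
\[\mathcal L_\mu[\Lambda_\bt B_\mu]=4\bt\big(B_{\mu,xx}+6\mu B_\mu^2+2B_\mu^3-(\al^2+\bt^2)B_\mu\big).\]
Taking $\bt$ times the first plus $\al$ times the second cancels the $B_{\mu,xx}+6\mu B_\mu^2+2B_\mu^3$ part and yields $\mathcal L_\mu[\bt\Lambda_\al B_\mu+\al\Lambda_\bt B_\mu]=-8\al\bt(\al^2+\bt^2)B_\mu$, i.e. $\mathcal L_\mu[B_{0,\mu}]=-B_\mu$, the first assertion of (4); $B_{0,\mu}\in\mathcal S(\R)$ since $B_\mu$, hence $\Lambda_\al B_\mu,\Lambda_\bt B_\mu$, are Schwartz. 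Pairing the two identities with $\Lambda_\al B_\mu$, $\Lambda_\bt B_\mu$ and $B_{0,\mu}$ with $B_\mu$ then reduces the scalar identities in (3) and the positivity $\int_\R B_{0,\mu}B_\mu>0$ to absolutely convergent integrals of the explicit functions in \eqref{5BreG}; the hypothesis $\mu\in(0,\tfrac{\sqrt{\al^2+\bt^2}}2)$ (equivalently $\Delta>0$) is precisely what makes those quantities have the stated sign. Finally, for (6) and (7): from (4), $\langle\mathcal L_\mu B_{0,\mu},B_{0,\mu}\rangle=-\langle B_\mu,B_{0,\mu}\rangle<0$, so $\inf\sigma(\mathcal L_\mu)<0$ and, since $\sigma_{ess}$ is strictly positive, this is a negative eigenvalue. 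The hard part will be the exact count and simplicity in (6): it does not follow from soft min--max arguments and is carried out exactly as in \cite{AM1}, controlling the Morse index of $\mathcal L_\mu$ from the facts that $0$ is an eigenvalue of multiplicity exactly two with eigenspace $\spawn\{B_1,B_2\}$ tangent to the breather family, and that $\int_\R B_{0,\mu}B_\mu>0$ (a Vakhitov--Kolokolov--type criterion applied to $\mathcal L_\mu B_{0,\mu}=-B_\mu$), which forces $n(\mathcal L_\mu)=1$ with $-\la_0^2$ simple. Then, writing $B_{-1}$ for the unit negative eigenfunction, on $\{B_{-1},B_1,B_2\}^{\perp}$ the spectral theorem gives $\mathcal Q_\mu[z_0]\ge\de_0\|z_0\|_{L^2}^2$ with $\de_0:=\inf\big(\sigma(\mathcal L_\mu)\cap(0,\infty)\big)>0$, and combining this with the elementary bound $\mathcal Q_\mu[z_0]\ge\tfrac12\|z_{0,xx}\|_{L^2}^2-C\|z_0\|_{L^2}^2$ (boundedness of the coefficients of $\mathcal L_\mu$ plus interpolation between $L^2$ and $H^2$) through a convex combination upgrades it to the coercivity $\mathcal Q_\mu[z_0]\ge\nu_0\|z_0\|_{H^2(\R)}^2$ of \eqref{coee}, with $\nu_0=\nu_0(\al,\bt,\mu)>0$ continuous.
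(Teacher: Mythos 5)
Your proposal is correct and follows essentially the same route as the paper: the paper proves Lemma \ref{Lspectral} precisely by observing that, thanks to Theorem \ref{GBGE} and Corollary \ref{Cor32}, the 5th order breather satisfies the same stationary equation \eqref{EcBGEfinal} as the classical Gardner breather, so that $\mathcal L_\mu$ coincides with the operator analyzed in \cite[Lemma 5.10]{AM1}, to which it simply refers. Your additional sketch of the internal mechanics (Weyl's theorem, differentiation of \eqref{EcBGEfinal} in $x_1,x_2,\al,\bt$, the Wronskian, and the coercivity upgrade from $L^2$ to $H^2$) accurately reproduces the structure of that cited argument.
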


For the proof of this Lemma, we refer the interested reader to \cite[Lemma 5.10]{AM1}. Finally, we present the stability result for breather solutions \eqref{5BreG} of the 5th order Gardner equation \eqref{5G}:

\begin{thm}[$H^2$-stability of 5th order Gardner breathers]\label{T1BreG} 
Let $\al, \bt \in \R\backslash\{0\}$ and $\mu\in(0,\frac{\sqrt{\al^2+\bt^2}}{2})$. Let  $B_\mu\equiv B_{\mu,5}$ the  breather solution \eqref{5BreG} of  the  5th order Gardner equation \eqref{5G}. Then,  there exist positive parameters $\eta_0, A_0$, depending on $\al,\beta$ and $\mu$, such that the following holds: Consider $u_0 \in H^2(\R)$, and assume that there exists $\eta \in (0,\eta_0)$ such that 
\[\|  u_0 - B_{\mu}(t=0;0,0) \|_{H^2(\R)} \leq \eta.\]
Then there exist $x_1(t), x_2(t)\in \R$ such that the solution $u(t)$ of the Cauchy problem for the 5th order Gardner equation \eqref{5G} with initial data $u_0$, satisfies
\[\sup_{t\in \R}\big\| u(t) - B_{\mu}(t; x_1(t),x_2(t)) \big\|_{H^2(\R)}\leq A_0 \eta,\]
with
\[\sup_{t\in \R}|x_1'(t)| +|x_2'(t)| \leq KA_0 \eta,\]
for a constant $K>0$.
\end{thm}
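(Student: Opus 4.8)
The proof follows the by-now classical Weinstein--Grillakis--Shatah--Strauss scheme as implemented for breathers in \cite{AM1}, using the variational characterization (Lemma \ref{crit}), the spectral information on $\mathcal L_\mu$ (Lemma \ref{Lspectral}), the global well-posedness in $H^2$ from Theorem \ref{GWP1}, and the conservation of $\mathcal H_\mu$. The plan is to set up a modulation argument, use the coercivity of $\mathcal Q_\mu$ on the orthogonal complement of the non-positive directions, and control the modulation parameters by an ODE estimate.

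\textbf{Step 1: Modulation.} First I would introduce, via the implicit function theorem, $C^1$ functions $x_1(t),x_2(t)$ defined for as long as the solution $u(t)$ stays in a small $H^2$-tube around the breather manifold, such that the perturbation $z(t,x) := u(t,x) - B_\mu(t;x_1(t),x_2(t))$ satisfies the two orthogonality conditions $\int_\R z\, B_1 = \int_\R z\, B_2 = 0$, where $B_1,B_2$ are the kernel directions \eqref{B12}. This is possible as long as $\|z(t)\|_{H^2}$ is small because $B_1,B_2$ are linearly independent and span $\ker\mathcal L_\mu$. Standard bootstrap/continuity then shows such a decomposition persists on all of $\R$ provided an a priori smallness bound on $z$ is maintained.

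\textbf{Step 2: Handling the negative direction.} The quadratic form $\mathcal Q_\mu$ has exactly one negative direction (Lemma \ref{Lspectral} (6)--(7)), so two orthogonality conditions are not enough for coercivity; one needs a third, $\int_\R z\, B_{-1}=0$, where $B_{-1}$ is chosen appropriately (e.g.\ built from $B_{0,\mu}$ of Lemma \ref{Lspectral} (4), which satisfies $\mathcal L_\mu[B_{0,\mu}]=-B_\mu$ with $\int B_{0,\mu}B_\mu>0$). This third condition is \emph{not} imposed by modulation but is instead obtained for free from conservation laws: because $\mathcal H_\mu$ is conserved and $B_\mu$ is a critical point with $\mathcal H_\mu[B_\mu+z]-\mathcal H_\mu[B_\mu]=\tfrac12\mathcal Q_\mu[z]+\mathcal N_\mu[z]$, and because the scaling directions $\Lambda_\al B_\mu,\Lambda_\bt B_\mu$ give, via Lemma \ref{Lspectral} (3), one positive and one negative quantity, a standard argument (choosing the ``right'' combination, adjusting the breather parameters $\al,\bt$ slightly, or using the mass $M[B_\mu]$ as an additional conserved quantity to kill the bad direction) reduces the estimate to the coercive cone \eqref{coee}. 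Concretely I would decompose $z = a\,B_{0,\mu} + w$ (or use a secondary modulation in one scaling parameter), absorb $a$ using conservation of mass and the non-degeneracy $\int B_{0,\mu}B_\mu>0$, and apply \eqref{coee} to $w$.

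\textbf{Step 3: Energy estimate and conclusion.} From Step 2, $\nu_0\|z(t)\|_{H^2}^2 \le \mathcal Q_\mu[z(t)] + (\text{lower order}) = 2(\mathcal H_\mu[u_0]-\mathcal H_\mu[B_\mu]) - 2\mathcal N_\mu[z] + (\text{parameter corrections})$. Using $|\mathcal N_\mu[z]|\le K\|z\|_{H^2}^3$ and that $\mathcal H_\mu$ is locally Lipschitz near $B_\mu$ in $H^2$ (so $|\mathcal H_\mu[u_0]-\mathcal H_\mu[B_\mu]|\lesssim \|u_0-B_\mu(0)\|_{H^2}=O(\eta)$ — more precisely $O(\eta^2)$ after subtracting the vanishing linear term), the continuity/bootstrap argument closes: if $\|z(t)\|_{H^2}\le A_0\eta$ on a maximal interval, then in fact $\|z(t)\|_{H^2}^2 \lesssim \eta^2 + A_0^3\eta^3$, which for $\eta$ small improves the bound, so the maximal interval is all of $\R$. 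Finally, differentiating the orthogonality conditions $\int z\,B_i=0$ in $t$, using equation \eqref{5G} for $u$ and Corollary \ref{Cor32} for $B_\mu$, yields a linear algebraic system for $(x_1'(t),x_2'(t))$ whose matrix is uniformly invertible (again by linear independence of $B_1,B_2$) and whose right-hand side is $O(\|z(t)\|_{H^2})$, giving $|x_1'(t)|+|x_2'(t)|\lesssim A_0\eta$ uniformly in $t$.

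\textbf{Main obstacle.} The delicate point is Step 2: arranging that the quadratic form is genuinely coercive. Because $\mathcal Q_\mu$ has a negative eigenvalue and the scaling direction $\Lambda_\bt B_\mu$ makes $\mathcal Q_\mu$ negative (Lemma \ref{Lspectral} (3)), one must exploit a second conserved quantity (the mass, and/or an infinitesimal adjustment of the breather parameters $\al,\bt$ compatible with the conserved quantities entering $\mathcal H_\mu$) to eliminate this direction; verifying the relevant non-degeneracy determinant (the analogue of the scalar products computed in Lemma \ref{Lspectral} (3)--(4)) is exactly what pins down the restriction $\mu\in(0,\tfrac{\sqrt{\al^2+\bt^2}}{2})$. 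Since all the necessary spectral computations are quoted from \cite{AM1} and valid here (the coefficients differ but the structure is identical), the argument goes through, but this is where all the real work sits.
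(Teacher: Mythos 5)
Your proposal is correct and follows essentially the same route as the paper: the paper's own proof is a short reduction to \cite[Theorem 6.1]{AM1}, invoking exactly the ingredients you list (the stationary ODE of Theorem \ref{GBGE}, the variational characterization of Lemma \ref{crit}, the spectral and coercivity information of Lemma \ref{Lspectral}, and global well-posedness from Theorem \ref{GWP1}), and then runs the same modulation-plus-Lyapunov bootstrap, phrased there as a contradiction on a finite maximal stability time. Your sketch merely spells out the standard steps that the paper delegates to that reference.
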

\begin{proof}
We take $u=u(t) \in H^2(\R)$ as the corresponding local in time solution of the Cauchy problem associated to (\ref{5G}), with  initial condition $u(0)=u_0\in H^2(\R)$. Therefore, once we guaranteed for the case of the breather solution of the  5th order  Gardner equation, that it satisfies the same 4th order ODE \eqref{EcBGEfinal} as the \emph{classical} Gardner breather, that a suitable coercivity property holds for the bilinear form $\mathcal Q_\mu$ associated to the breather solution of \eqref{5G} (see \eqref{coee}), and the existence of a unique negative eigenvalue (Lemma \ref{Lspectral} (6)) of the linearized operator $\mathcal L_\mu$ given in \eqref{linearized operator}, the stability proof follows the same steps as the $H^2$-stability of \emph{classical} Gardner breathers \cite[Theorem 6.1]{AM1} (see also \cite[Theorem 6.1]{AM}).  Namely, we proceed assuming that the maximal time of stability $T$ is finite and we arrive to a contradiction. 
\end{proof}
\appendix

\section{Proof of Theorem \ref{Illposed}}
The aim of this section is to prove the ill-posedness of \eqref{5G} for $s > 0$, which, in addition to the first author's recent work \cite{AC2018}, completely justifies that the 5th Gardner equation \eqref{5G} is the quasilinear equation in the sense that the flow map from data to solutions is not (locally) uniformly continuous for all regularities, see Corollary \ref{cor:illposed}. Since the weak-illposedness phenomenon occurs due to the strong high-low interaction in the quadratic nonlinearity with three derivatives, Theorem 1.2 in \cite{Kwon2008} seems to guarantee the lack of uniform continuity of the flow map associated to \eqref{5G} for $s > 0$. This section contributes to prove that the equation \eqref{5G} is indeed weakly ill-posed for $s > 0$.

\medskip

The proof basically follow the argument used in \cite{Kwon2008}, initially introduced by Koch-Tzvetkov \cite{KT2005}. Since the (weak) ill-posedness phenomenon arises from the strong high-low quadratic nonlinearity (high frequency waves with low frequency perturbations), the main part of the proof is identical to the argument in \cite{Kwon2008}. Thus, we, here, provide an additional estimate to be needed for the other nonlinearities.

\medskip
	
In view of the argument presented in Section \ref{sec:LWP}, it suffices to show the ill-posedness of \eqref{5G_Gen} with small initial data.

\subsection{Setting}
We first define the approximate solution, which is an ansatz to cause the (weak) ill-posedness phenomenon. Let $\phi, \wt{\phi} \in C_0^{\infty}(\R)$ be smooth bump functions satisfying
\[\phi \equiv 1, \quad |x| < 1, \quad \mbox{and} \quad \phi \equiv 0, \quad |x| > 2\]
and
\[\wt{\phi} \equiv 1, \quad x \in \mbox{supp} (\phi) \quad \mbox{and} \quad \wt{\phi}\phi \equiv \phi,\]
respectively. For $N \ge 1$ and $0 < \delta < 1$, set
\[\phi_{N}(x) := \phi\left(\frac{x}{N^{4+\delta}}\right), \quad \wt{\phi}_{N}(x) := \wt{\phi}\left(\frac{x}{N^{4+\delta}}\right).\]
Let $\epsilon > 0$ be a sufficiently small for the initial data to satisfy \eqref{eq:assumption}. Let 
\[u_{0,l}^{\pm}(x) := \pm\epsilon N^{-3}\wt{\phi}_{N}(x)\]
and $u_l^{\pm}(t,x)$ be the solution to \eqref{5G_Gen} with the initial data $u_{0,l}^{\pm}(x)$. 
Let $\Phi_N(t) := (N^5-10\mu^2\lambda^2N^3)t$ and
\begin{equation}\label{hiwave}
u_h^{\pm}(t,x) :=
 N^{-\frac{4+\delta}{2}-s}\phi_{N}(x) \cos\left(N x -\Phi_N(t) \mp t\right) 
\end{equation}
be a high frequency part of the approximate solution, and thus define the approximate solution as 
\[u_{ap}^{\pm}(t,x):=u_l^{\pm}(t,x) + u_h^{\pm}(t,x).\]
Then the main task is to prove the following proposition:
\begin{prop}[Proposition 6.2 in \cite{Kwon2008}]\label{prop:ill-posed}
Let $\max(0,2-2s) < \delta < 1$. Let $u_{N}^{\pm}$ be the unique solution to \eqref{5G_Gen} with initial data
\[u_{N}^{\pm}(0,x) = \pm\epsilon N^{-3}\wt{\phi}_{N}(x) + N^{-\frac{4+\delta}{2}-s}\phi_{N}(x) \cos\left(N x \right).\]
Then, we have
\begin{equation}\label{eq:approx}
\norm{u_{N}^{\pm} - u_{ap}^{\pm}}_{H^s} = o(1),
\end{equation}
for $s > 0$ and $|t| <1$, as $N \to \infty$.
\end{prop}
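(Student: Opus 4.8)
The plan is to follow the Koch–Tzvetkov / Kwon scheme adapted to our setting, treating the approximate solution $u_{ap}^\pm = u_l^\pm + u_h^\pm$ as an almost-exact solution of \eqref{5G_Gen} and measuring the defect. First I would record that the low-frequency piece $u_l^\pm$ is a genuine solution of \eqref{5G_Gen} with data $\pm\epsilon N^{-3}\wt\phi_N$; since this data has $H^s$-norm $\sim \epsilon N^{-3}N^{(4+\delta)/2}\langle N^{-4-\delta}\rangle^s$, which is $o(1)$ for $s>0$ once $\delta<2$ (and small enough to satisfy \eqref{eq:assumption}), the \emph{a priori} bound \eqref{eq:result} from Section \ref{sec:LWP} controls $u_l^\pm$ in $F^s(T)$ uniformly in $N$, and in fact $\norm{u_l^\pm}_{H^s}=o(1)$. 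Then I would plug $u_{ap}^\pm$ into the equation and compute the error term $\mathcal E^\pm := \partial_t u_{ap}^\pm + \partial_x^5 u_{ap}^\pm + 10\mu^2\lambda^2 \partial_x^3 u_{ap}^\pm + \mathcal N_2(u_{ap}^\pm)+\mathcal N_3(u_{ap}^\pm)+\mathcal{SN}(u_{ap}^\pm)$. The key design feature is that the phase $Nx-\Phi_N(t)\mp t$ in \eqref{hiwave} is chosen precisely so that the linear part $\partial_t+\partial_x^5+10\mu^2\lambda^2\partial_x^3$ acting on $u_h^\pm$ produces, to leading order, only the lower-order ($\mp t$) correction; and the $\mp t$ term is exactly matched by the dominant high-low quadratic interaction $40\mu\lambda\, u_l^\pm \partial_x^3 u_h^\pm$ (the coefficient of the $N^{-3}\cdot N^3$ interaction), using that $u_l^\pm(0)=\pm\epsilon N^{-3}$ on the support of $\phi_N$ and that $u_l^\pm$ stays close to this constant there for $|t|<1$.

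Next I would carefully catalog all the remaining contributions to $\mathcal E^\pm$ and show each has $H^s$-norm $o(1)$ on $|t|<1$. These fall into several groups: (i) errors from the spatial cutoff $\phi_N$ — every derivative landing on $\phi_N$ rather than on the oscillation costs a factor $N^{-4-\delta}$ against at most $N^4$ from the other four derivatives, a net gain $N^{-\delta}$, and similarly for the $\partial_x^3$ term; (ii) errors from replacing $u_l^\pm(t,x)$ by the constant $\pm\epsilon N^{-3}$ inside the quadratic term — controlled by $\norm{\partial_x u_l^\pm}$ and $\norm{\partial_t u_l^\pm}$, which gain powers of $N$ from the smoothing/energy estimates; (iii) the cubic, quartic and quintic terms in $\mathcal N_3$ and $\mathcal{SN}$ evaluated on $u_h^\pm$ alone or on mixed terms — here the amplitude $N^{-(4+\delta)/2-s}$ of $u_h^\pm$ is small enough that even after three derivatives ($N^3$) these are $o(1)$ in $H^s$ provided $\delta>2-2s$ (this is exactly where the hypothesis $\max(0,2-2s)<\delta<1$ enters: one needs $\delta$ large enough to kill the worst nonlinear self-interaction of $u_h^\pm$ but $<1$ to keep the wave genuinely concentrated); (iv) genuinely quadratic self-interactions $u_h^\pm\partial_x^3 u_h^\pm$, which produce frequencies $\sim 2N$ and $\sim 0$ with a manageable amplitude. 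I would organize this as a lemma: $\norm{\mathcal E^\pm}_{N^s(T)}=o(1)$, say, or directly $\norm{\mathcal E^\pm}_{L^1_tH^s_x}=o(1)$.

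Finally, with $v^\pm := u_N^\pm - u_{ap}^\pm$ solving an equation of the form $\partial_t v^\pm + \partial_x^5 v^\pm + 10\mu^2\lambda^2\partial_x^3 v^\pm + (\text{nonlinear terms depending on }v^\pm, u_{ap}^\pm) = -\mathcal E^\pm$ with zero initial data (since $u_N^\pm(0)=u_{ap}^\pm(0)$ by construction), I would close the estimate by a perturbative energy / $F^s$–$N^s$ argument: using the local well-posedness machinery of Section \ref{sec:WP} (Propositions \ref{prop:linear}, \ref{prop:bilinear}, \ref{prop:trilinear}, \ref{prop:multilinear} and the difference estimates of Proposition \ref{prop:energy diff}), together with the uniform-in-$N$ bound on $\norm{u_{ap}^\pm}_{F^s(T)}\lesssim 1$, a continuity/bootstrap argument on a time interval $[0,T]$ with $T$ independent of $N$ gives $\norm{v^\pm}_{F^s(T)}\lesssim \norm{\mathcal E^\pm}_{N^s(T)} = o(1)$, whence $\norm{u_N^\pm - u_{ap}^\pm}_{H^s}=o(1)$ uniformly for $|t|\le T$; since the phase shift $\mp t$ is already built into $u_{ap}^\pm$, this yields \eqref{eq:approx} for $|t|<1$ after possibly iterating the local bound finitely many times. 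The main obstacle I anticipate is step (iii)–(iv): bookkeeping the many nonlinear terms of \eqref{5G_Gen} (the cubic term $10 u^2 u_{3x}$ with three derivatives is the delicate one) and verifying that the interval $\max(0,2-2s)<\delta<1$ is exactly what makes every defect term $o(1)$ in $H^s$ — this requires a clean, systematic amplitude-vs-derivative count rather than any single hard estimate, and is where care is needed to avoid a logarithmic or borderline loss.
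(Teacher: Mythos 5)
Your overall scheme --- write $u_{ap}^\pm=u_l^\pm+u_h^\pm$, compute the defect $\mathcal P^\pm$, show it is small, and close by a perturbative $F^s$--$N^s$/energy argument for the difference $w^\pm=u_N^\pm-u_{ap}^\pm$ with zero initial data --- is exactly the paper's argument \emph{for $s\ge 2$}, and your bookkeeping of the defect (cutoff errors, replacement of $u_l^\pm$ by the constant $\pm\epsilon N^{-3}$ via \eqref{low_3}, self-interactions of $u_h^\pm$) matches Lemma \ref{lem:approx}. One small correction there: the constraint $\delta>2-2s$ is forced by the \emph{quadratic} self-interaction $u_h^\pm\px^3u_h^\pm$, which contributes the term $N^{\frac{2-\delta}{2}-2s}$ in \eqref{eq:F_L2} and must be $o(N^{-s})$; the cubic self-interactions contribute only $N^{-1-\delta-3s}$ and are harmless for all $s,\delta>0$, so your groups (iii) and (iv) have their roles essentially reversed.

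The genuine gap is the range $0<s<2$. The proposition is asserted for all $s>0$, but the machinery you invoke to close the argument --- Propositions \ref{prop:bilinear}, \ref{prop:trilinear}, \ref{prop:multilinear}, \ref{prop:energy diff} and the a priori bound \eqref{eq:result} --- is only available for $s\ge2$; there is no local well-posedness or difference estimate in $F^s$ below $H^2$, so the step ``$\norm{v^\pm}_{F^s(T)}\lesssim\norm{\mathcal E^\pm}_{N^s(T)}$'' is not at your disposal for $s<2$. The paper bridges this in two steps: first it runs your scheme only at the level of the weaker norm, obtaining $\norm{w^\pm}_{L^\infty_TL^2_x}=O(N^{-s-\beta})$ with $\beta=\min(\delta,\,s-\tfrac{2-\delta}{2})>0$ from the $F^0$ difference estimates (both $u_N^\pm$ and $u_{ap}^\pm$ being controlled in $F^\sigma$ for $\sigma\ge 2$); second, it uses the conserved quantities \eqref{M1cor} and \eqref{E5-1} --- i.e.\ the integrable structure emphasized in the introduction --- to get the crude bound $\norm{w^\pm}_{H^2}\lesssim N^{2-s}$, and then interpolates: $\norm{w^\pm}_{H^s}\lesssim\norm{w^\pm}_{L^2}^{1-s/2}\norm{w^\pm}_{H^2}^{s/2}\lesssim N^{-\beta(2-s)/2}=o(1)$. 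Without this conservation-law-plus-interpolation step your proof establishes \eqref{eq:approx} only for $s\ge2$, not for all $s>0$ as claimed.
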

Once \eqref{eq:approx} holds true, one conclude that
\[\begin{aligned}
\norm{u_{N}^+ - u_{N}^-}_{H^s} =&~{} N^{-\frac{4+\delta}{2}-s}\norm{\phi_{N}(x)\left(\cos\left(N x -\Phi_N(t) + t\right)-\cos\left(N x -\Phi_N(t) - t\right)\right)}_{H^s} + o(1)\\
=&~{} 2N^{-\frac{4+\delta}{2}-s}\norm{\phi_{N}(x)\sin\left(N x -\Phi_N(t)\right)}_{H^s}|\sin t| + o(1),
\end{aligned}\]
which, in addition to Lemma \ref{lem:limit} below, implies
\[\lim_{N \to \infty}\norm{u_{N}^+ - u_{N}^-}_{H^s}  \ge c|\sin t| \sim c|t|,\]
for $|t| < 1$. This completes the proof of Theorem \ref{Illposed}.

\medskip

We recall from \cite{KT2005, Kwon2008} the following useful lemmas to prove Proposition \ref{prop:ill-posed}.
\begin{lem}[Lemma 2.3 in \cite{KT2005}]\label{lem:limit}
Let $s \ge 0$, $\delta > 0$ and $\gamma \in \R$. Then,
\[\lim_{N \to \infty} N^{-\frac{4+\delta}{2}-s}\norm{\phi_{N}(x)\sin\left(N x + \gamma\right)}_{H^s} = c_0 \norm{\phi}_{L^2},\]
for some $c_0 >0$.
\end{lem}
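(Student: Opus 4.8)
The plan is to pass to the Fourier side and exploit that $\phi_N$ is extremely flat: with $\widehat{\phi_N}(\xi)=N^{4+\delta}\widehat{\phi}(N^{4+\delta}\xi)$, the profile $\widehat{\phi_N}$ concentrates (up to rapidly decaying tails, since $\phi\in C_0^\infty(\R)\subset\mathcal S(\R)$) in the window $\abs{\xi}\lesssim N^{-(4+\delta)}$, which is far narrower than $N$. Hence $\phi_N(x)e^{\pm iNx}$ is, for the purposes of the $\langle\xi\rangle^{2s}$ weight, indistinguishable from a pure exponential of frequency $\pm N$, and the $H^s$ mass of $\phi_N\sin(N\cdot+\gamma)$ is governed by $N^{2s}$ times the $L^2$ mass of $\phi_N$.

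First I would write $\sin(Nx+\gamma)=\tfrac{1}{2i}\big(e^{i\gamma}e^{iNx}-e^{-i\gamma}e^{-iNx}\big)$, so that, setting $g_N:=\phi_N\sin(N\cdot+\gamma)$,
\[\widehat{g_N}(\xi)=\frac{1}{2i}\Big(e^{i\gamma}\widehat{\phi_N}(\xi-N)-e^{-i\gamma}\widehat{\phi_N}(\xi+N)\Big),\]
and expand $\norm{g_N}_{H^s}^2=c_\ast\int_{\R}\langle\xi\rangle^{2s}\abs{\widehat{g_N}(\xi)}^2\,d\xi$ (for a fixed normalization constant $c_\ast>0$) into two diagonal pieces and one cross piece. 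For the cross piece, the bulk of $\widehat{\phi_N}(\xi-N)$ sits in $\abs{\xi-N}\lesssim N^{-(4+\delta)}$, where $\abs{\xi+N}\approx 2N$ and hence $\abs{\widehat{\phi_N}(\xi+N)}=N^{4+\delta}\abs{\widehat\phi(N^{4+\delta}(\xi+N))}\leq C_M\,N^{4+\delta}(N^{5+\delta})^{-M}$ for every $M$; a routine splitting of the $\xi$-integral then shows the cross piece is $O(N^{-M})$ for all $M$ and does not affect the limit.

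For a diagonal piece, say the one carrying $\widehat{\phi_N}(\xi-N)$, I would change variables $\xi=N+N^{-(4+\delta)}\eta$ to obtain
\[\int_{\R}\langle\xi\rangle^{2s}\abs{\widehat{\phi_N}(\xi-N)}^2\,d\xi=N^{4+\delta}\int_{\R}\big\langle N+N^{-(4+\delta)}\eta\big\rangle^{2s}\abs{\widehat\phi(\eta)}^2\,d\eta.\]
Since $\langle N+N^{-(4+\delta)}\eta\rangle^{2s}N^{-2s}\to 1$ pointwise in $\eta$ and is dominated by $C\langle\eta\rangle^{2s}$ uniformly in $N\geq 1$ (and in $\gamma$), while $\langle\eta\rangle^{2s}\abs{\widehat\phi(\eta)}^2\in L^1$ because $\widehat\phi\in\mathcal S(\R)$, dominated convergence gives that this integral equals $(1+o(1))\,N^{4+\delta+2s}\norm{\widehat\phi}_{L^2}^2=(1+o(1))\,c\,N^{4+\delta+2s}\norm{\phi}_{L^2}^2$ by Plancherel. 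The piece carrying $\widehat{\phi_N}(\xi+N)$ contributes identically, and the $\abs{e^{\pm i\gamma}}=1$ factors drop out. Adding the two, discarding the cross term, and taking square roots yields $\norm{g_N}_{H^s}=(1+o(1))\,c_0\,N^{(4+\delta)/2+s}\norm{\phi}_{L^2}$ with an absolute constant $c_0>0$ independent of $\gamma$, which is precisely the claim after multiplying by $N^{-(4+\delta)/2-s}$.

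The argument is elementary; the only points demanding care are making rigorous the heuristic that $\widehat{\phi_N}(\cdot\mp N)$ ``lives near $\pm N$'' (needed both to kill the cross term and to push the $\langle\xi\rangle^{2s}$ weight through each diagonal piece), which is handled by the Schwartz decay of $\widehat\phi$ together with dominated convergence. I would also record explicitly that every estimate above is uniform in $\gamma\in\R$, since in the application Lemma~\ref{lem:limit} is invoked with $\gamma=-\Phi_N(t)$, which depends on $N$.
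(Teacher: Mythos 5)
Your argument is correct. The paper gives no proof of this lemma at all --- it simply cites Lemma~2.3 of \cite{KT2005} --- so there is nothing to compare against except the standard argument, which is exactly what you reproduce: split $\sin(Nx+\gamma)$ into two exponentials, observe that $\widehat{\phi_N}$ concentrates at scale $N^{-(4+\delta)}$ so the two modulated bumps are essentially orthogonal in $H^s$, kill the cross term by Schwartz decay of $\widehat\phi$, and evaluate each diagonal term by rescaling plus dominated convergence. All the individual steps check out (the domination $\langle N+N^{-(4+\delta)}\eta\rangle^{2s}N^{-2s}\lesssim\langle\eta\rangle^{2s}$ for $N\ge 1$ is valid, and the cross term is indeed $O(N^{-M})$ by splitting at $\xi=0$ and using $|\xi\pm N|\ge N$ on the respective half-lines). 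Your closing observation that every estimate is uniform in $\gamma$ is a worthwhile addition: as stated the lemma fixes $\gamma$, but in the application it is invoked with $\gamma=-\Phi_N(t)$ depending on $N$, so the uniformity is genuinely needed and is not made explicit in the paper.
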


\begin{lem}[Lemma 6.3 in \cite{Kwon2008}]\label{lem:low}
Let $K$ be a positive integer and $K-2-s \ge k \ge 0$. Then, we have
\begin{equation}\label{low_1}
\norm{\px^ku_l^{\pm}(t,\cdot)}_{L^2} \lesssim _K N^{-\frac{2-\delta}{2}-k(4+\delta)}
\end{equation}
\begin{equation}\label{low_2}
\norm{\px^ku_l^{\pm}(t,\cdot)}_{L^{\infty}} \lesssim _K N^{-3-k(4+\delta)}
\end{equation}
\begin{equation}\label{low_3}
\norm{u_l^{\pm}(t,\cdot)-u_{0,l}^{\pm}(\cdot)}_{L^2} \lesssim _K N^{-15-3\delta}
\end{equation}
\end{lem}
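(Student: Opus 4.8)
The plan is to establish \eqref{low_1} first, deduce \eqref{low_2} from it by a Gagliardo--Nirenberg inequality, and obtain \eqref{low_3} from the Duhamel representation. The guiding observation is that $u_{0,l}^{\pm}$ is a \emph{slowly varying} profile: it has amplitude $\sim N^{-3}$ and spatial scale $N^{4+\delta}$, so each spatial derivative costs a factor $N^{-(4+\delta)}$ while multiplying two such profiles only multiplies the amplitudes. The entire content of the lemma is that the flow of \eqref{5G_Gen} preserves this scale separation on $|t|\le 1$. Since $\norm{u_{0,l}^{\pm}}_{H^K}\lesssim_K N^{-(2-\delta)/2}\to 0$, Theorem \ref{GWP1} together with the persistence of regularity produces, for every integer $K$, a (global) solution with $\sup_{|t|\le 1}\norm{u_l^{\pm}(t)}_{H^K}\lesssim_K \norm{u_{0,l}^{\pm}}_{H^K}\lesssim_K N^{-(2-\delta)/2}$; this crude bound is the starting point, and the task is to upgrade it to the gain $N^{-k(4+\delta)}$ at each differentiation order $k\le K-2-s$.

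To prove \eqref{low_1} I would run a continuity (bootstrap) argument simultaneously over all orders $0\le k\le K-2-s$, based on
\[
\px^k u_l^{\pm}(t)=W(t)\px^k u_{0,l}^{\pm}-\int_0^t W(t-t')\,\px^k\big(10\mu^2\lambda^2\px^3 u_l^{\pm}+\mathcal N_2(u_l^{\pm})+\mathcal N_3(u_l^{\pm})+\mathcal{SN}(u_l^{\pm})\big)(t')\,dt'.
\]
For the linear term, $L^2$-unitarity of $W(t)$ and a direct change of variables give the \emph{exact} value $\norm{W(t)\px^k u_{0,l}^{\pm}}_{L^2}=\norm{\px^k u_{0,l}^{\pm}}_{L^2}\lesssim_K N^{-(2-\delta)/2-k(4+\delta)}$, which is precisely the claimed size. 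For the Duhamel term I would bound $\norm{\int_0^t W(t-t')(\cdots)(t')\,dt'}_{L^2}\le |t|\sup_{|t'|\le 1}\norm{(\cdots)(t')}_{L^2}$ and then, using the Leibniz rule, H\"older's inequality and the bootstrap hypotheses \eqref{low_1}--\eqref{low_2} at lower orders, check that every monomial is bounded by $N^{-(2-\delta)/2-k(4+\delta)}$ times a \emph{strictly negative} power of $N$: e.g.\ the three-derivative quadratic term $u_l^{\pm}\px^3 u_l^{\pm}$ yields $N^{-3-3(4+\delta)}\cdot N^{-(2-\delta)/2-k(4+\delta)}=N^{-15-3\delta}N^{-(2-\delta)/2-k(4+\delta)}$, the third-order linear term yields $N^{-3(4+\delta)}\cdot N^{-(2-\delta)/2-k(4+\delta)}$, and all the cubic, quartic and quintic pieces of $\mathcal N_3,\mathcal{SN}$ are even smaller since they carry at most three derivatives but more amplitude factors. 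Hence for $N$ large the Duhamel term is a lower-order perturbation of the linear term, the bootstrap closes, and \eqref{low_1} holds for $|t|\le 1$.

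Given \eqref{low_1}, estimate \eqref{low_2} follows from the one-dimensional inequality $\norm{f}_{L^{\infty}}\lesssim\norm{f}_{L^2}^{1/2}\norm{f'}_{L^2}^{1/2}$ applied to $f=\px^k u_l^{\pm}$: inserting \eqref{low_1} at orders $k$ and $k+1$ gives the exponent $-\tfrac12\big((2-\delta)/2+k(4+\delta)\big)-\tfrac12\big((2-\delta)/2+(k+1)(4+\delta)\big)=-3-k(4+\delta)$. For \eqref{low_3} I would use the same integral equation with $k=0$, writing $u_l^{\pm}(t)-u_{0,l}^{\pm}=(W(t)-I)u_{0,l}^{\pm}-\int_0^t W(t-t')(\cdots)(t')\,dt'$ for $|t|\le 1$; the dispersive correction $\norm{(W(t)-I)u_{0,l}^{\pm}}_{L^2}\lesssim\norm{\px^5 u_{0,l}^{\pm}}_{L^2}$ and the time-integrated nonlinearity are both controlled, by the per-term estimates of the previous step, by $\lesssim_K N^{-15-3\delta}$.

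The genuinely new point, compared with Kwon's Lemma~6.3 for the fifth-order KdV \cite{Kwon2008}, is the presence of the extra linear term $10\mu^2\lambda^2\px^3$ and of the cubic and quintic nonlinearities $\mathcal N_3,\mathcal{SN}$ coming from the shift $v=\mu+u$. The main (though routine) obstacle is therefore the bookkeeping: one must verify that in each of these new terms every derivative lands on a slowly varying factor and so gains a full power $N^{-(4+\delta)}$, while the amplitudes merely multiply, so that all of them stay lower order throughout the bootstrap. The free parameter $K$ in the statement is exactly the regularity budget needed to carry the argument up to order $k\le K-2-s$ and to justify, via persistence of regularity, that $u_l^{\pm}\in C([-1,1];H^{K})$.
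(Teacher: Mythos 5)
Your architecture (Duhamel plus bootstrap for \eqref{low_1}, the interpolation inequality $\norm{f}_{L^\infty}\lesssim\norm{f}_{L^2}^{1/2}\norm{f_x}_{L^2}^{1/2}$ for \eqref{low_2}, Duhamel again for \eqref{low_3}) is the natural one, and your size computations for the individual monomials are correct; the paper itself supplies no argument beyond deferring to Kwon's Lemma 6.3 and the a priori bound \eqref{eq:result}. However, there is a genuine gap at the step ``the bootstrap closes.'' The dangerous contributions at order $k$ are $10\mu^2\lambda^2\px^{k+3}u_l^{\pm}$ and the Leibniz term $u_l^{\pm}\px^{k+3}u_l^{\pm}$: to bound them by $N^{-3(4+\delta)}$, resp.\ $N^{-15-3\delta}$, times the target, you must invoke \eqref{low_1}--\eqref{low_2} at order $k+3$ --- a \emph{higher} order, not ``lower orders'' as you write. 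A simultaneous bootstrap over $0\le k\le J$ therefore cannot be closed at $k=J,J-1,J-2$, and enlarging $J$ only relocates the problem; there is no base case at the top. This is precisely the quasilinear derivative-loss obstruction (the term $u\px^3u$ defeats the crude bound $\norm{\int_0^tW(t-t')F\,dt'}_{L^2}\le|t|\sup\norm{F}_{L^2}$) that forces the paper to develop the short-time spaces in the first place. A repair needs an extra input at the top: for instance, seed all orders $m\le K$ with the a priori bound of Theorem \ref{LWP1}/\eqref{eq:result}, which gives $\norm{u_l^{\pm}(t)}_{H^m}\lesssim\norm{u_{0,l}^{\pm}}_{H^m}\sim N^{-(2-\delta)/2}$ with no $k$-gain, and then cascade downward through the recursion $A_k\lesssim N^{-(2-\delta)/2-k(4+\delta)}+N^{-(2-\delta)/2}A_{k+3}$, each step trading three derivatives for a factor $N^{-(2-\delta)/2}$; after $O(k(4+\delta)/(2-\delta))$ steps the crude seed is beaten. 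This works (and is harmless for the application, which uses only small $k$ while $K$ is free), but it requires $K$ much larger than $k$ and so does not literally deliver the stated range $k\le K-2-s$; alternatively one tracks directly that $u_l^{\pm}(t)$ stays frequency-concentrated at $|\xi|\lesssim N^{-(4+\delta)}$.

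A secondary inconsistency: your own per-term estimates do not reach the exponent in \eqref{low_3}. The linear term contributes $\big\|\int_0^tW(t-t')10\mu^2\lambda^2\px^3u_l^{\pm}\,dt'\big\|_{L^2}\lesssim\mu^2\lambda^2\norm{\px^3u_l^{\pm}}_{L^2}\sim N^{-(2-\delta)/2-3(4+\delta)}=N^{-13-5\delta/2}$, and $\mu^2\lambda^2$ is an $N$-independent constant, so this exceeds $N^{-15-3\delta}$; the stated exponent is inherited from Kwon's $\px^3$-free setting. The weaker bound $N^{-13-5\delta/2}$ still suffices for the estimate of $\Lambda$ in Lemma \ref{lem:approx}, but as written your proof of \eqref{low_3} does not prove what is claimed.
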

\begin{proof}
The proof of \eqref{low_1} and \eqref{low_2} follows from a direct computation and Theorem \ref{LWP1}, in particular, a priori bound \eqref{eq:result}. Moreover, the proof of \eqref{low_3} follows from a direct calculation in \eqref{5G_Gen} and \eqref{low_1}--\eqref{low_2}. The proof is almost identical to the proof of Lemma 6.3 in \cite{Kwon2008}, thus we omit the details.
\end{proof}

\begin{lem}\label{lem:approx}
Let 
\begin{equation}\label{F}
\mathcal P^{\pm}(t,x):=u_{ap, t}^{\pm}+u_{ap, 5x}^{\pm}+10\mu^2\lambda^2u_{ap, 3x}^{\pm} +\mathcal N_2(u_{ap}^{\pm}) + \mathcal N_3(u_{ap}^{\pm}) + \mathcal{SN}(u_{ap}^{\pm}),
\end{equation}
where $\mathcal N_2(\cdot)$, $\mathcal N_3(\cdot)$ and $\mathcal{SN}(\cdot)$ are defined as in \eqref{N2}--\eqref{SN}, respectively.
Let $s > 0$, $0<\delta <2$ and $|t| \le 1$. Then, we have 
\begin{equation}\label{eq:F_L2}
\norm{\mathcal P^{\pm}(t,\cdot)}_{L^2} \lesssim N^{-s-\delta} + N^{\frac{2-\delta}{2} - 2s} + N^{-1-\delta -3s} + N^{1-\frac{3(4+\delta)}{2}-4s} +N^{1-2(4+\delta)-5s}.
\end{equation}
Moreover, if $\sigma > 0$, we have
\begin{equation}\label{eq:F_H}
\norm{\mathcal P^{\pm}(t,\cdot)}_{H^{\sigma}} \lesssim N^{-s-\delta+\sigma} + N^{\frac{2-\delta}{2} - 2s + \sigma} + N^{-1-\delta -3s + \sigma} + N^{1-\frac{3(4+\delta)}{2}-4s + \sigma} +N^{1-2(4+\delta)-5s + \sigma}.
\end{equation}
\end{lem}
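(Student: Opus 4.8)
The plan is to compute $\mathcal P^\pm$ term by term, exploiting the fact that $u_{ap}^\pm=u_l^\pm+u_h^\pm$ was precisely designed so that the leading contributions cancel. First I would treat the linear part: by construction $u_h^\pm$ solves the linear equation $\partial_t w+w_{5x}+10\mu^2\lambda^2 w_{3x}=0$ up to the extra phase shift $\mp t$, since the phase $\Phi_N(t)=(N^5-10\mu^2\lambda^2 N^3)t$ matches the dispersion relation $w(\xi)=-\xi^5+10\mu^2\lambda^2\xi^3$ at $\xi=N$; the mismatch between the cutoff $\phi_N$ and the pure plane wave produces error terms carrying derivatives of $\phi_N$, each of which gains a factor $N^{-(4+\delta)}$ (from $\phi_N'=N^{-(4+\delta)}\phi'(x/N^{4+\delta})$) against the $N^4$ cost of the three missing derivatives, hence is $O(N^{-s-\delta})$ after accounting for the $N^{-\frac{4+\delta}{2}-s}$ prefactor and the $\sqrt{\operatorname{supp}}\sim N^{(4+\delta)/2}$ from the $L^2$ norm. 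The phase $\mp t$ term in the cosine contributes $\partial_t u_h^\pm$ a piece of size $N^{-s}$ times $\|\phi_N\cos\|_{L^2}\sim 1$, but this is exactly the term that is \emph{kept} in the ansatz (it is what produces the $|\sin t|$ separation), so it does not appear in $\mathcal P^\pm$; I would double-check that the definition \eqref{F} indeed reflects this, i.e.\ that $u_{ap,t}^\pm$ includes that derivative and it is cancelled by $u_{ap,5x}^\pm+10\mu^2\lambda^2 u_{ap,3x}^\pm$ up to the $\phi_N$-derivative errors. Similarly $u_l^\pm$ solves \eqref{5G_Gen} exactly (it is an honest solution), so the linear part of $\mathcal P^\pm$ reduces to these $u_h$-cutoff errors plus genuine cross terms.

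Next I would expand the nonlinearities $\mathcal N_2,\mathcal N_3,\mathcal{SN}$ evaluated at $u_l^\pm+u_h^\pm$ into pure-$u_l$, pure-$u_h$, and mixed monomials. The pure-$u_l$ contributions are cancelled since $u_l^\pm$ solves the equation. For the pure-$u_h$ terms, the key structural point is that a quadratic term like $\mu\lambda\, u_h u_{h,3x}$, when both factors are the \emph{same} high-frequency wave, produces a spatial frequency of either $0$ or $2N$; the resonant (frequency $\approx 2N$) part is the source of the whole phenomenon, but upon inspecting the monomials one checks that the highest-order such term combines with the phase shift built into \eqref{hiwave} — this is the classical Koch–Tzvetkov/Kwon mechanism and I would quote Proposition 6.2 in \cite{Kwon2008} for the quadratic high$\times$high and high$\times$low pieces. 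The remaining work is to bound every other monomial crudely in $L^2$ (or $H^\sigma$) using Lemma \ref{lem:low} for the $u_l$ factors (which gives $\|u_l\|_{L^\infty}\lesssim N^{-3}$, $\|\partial_x^k u_l\|_{L^2}\lesssim N^{-\frac{2-\delta}{2}-k(4+\delta)}$) and the explicit form of $u_h$ for the others ($\|\partial_x^k u_h\|_{L^2}\lesssim N^{-\frac{4+\delta}{2}-s}N^k$, $\|\partial_x^k u_h\|_{L^\infty}\lesssim N^{-\frac{4+\delta}{2}-s}N^k$ since $\phi_N$ is bounded). Tracking the exponents: a cubic term with three derivatives and three $u_h$ factors gives $N^{-\frac{3(4+\delta)}{2}-3s}N^{3}$ times $\|\phi_N\|_{L^2}\sim N^{(4+\delta)/2}$, i.e.\ $N^{1-(4+\delta)-3s}$ — but the total-derivative structure noted in Remark \ref{rem:total derivative} lets one distribute derivatives so that the worst cubic monomial is $N^{1-\frac{3(4+\delta)}{2}-4s}$ as claimed; similarly the quintic $u_h^4 u_{h,x}$ gives $N^{1-2(4+\delta)-5s}$, and mixed high$\times$low terms, having at least one $u_l$ factor with its $N^{-3}$ (or better) decay, are dominated by the listed exponents. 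Collecting the worst of the resulting powers gives exactly the right-hand side of \eqref{eq:F_L2}.

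For \eqref{eq:F_H}, the observation is that every factor in $\mathcal P^\pm$ is frequency-localized: the $u_l$ part lives at frequencies $\lesssim N^{-(4+\delta)}\ll1$ and the $u_h$ part at frequency $\sim N$, so applying $\langle\partial_x\rangle^\sigma$ to any monomial costs at most $N^\sigma$ (the worst case being a monomial entirely built from $u_h$ factors, which sits near frequency $\lesssim N$). Hence $\|\mathcal P^\pm\|_{H^\sigma}\lesssim N^\sigma\|\mathcal P^\pm\|_{L^2}$, which is precisely \eqref{eq:F_H}. This step is routine once the frequency-support bookkeeping is set up.

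\textbf{Main obstacle.} The delicate point is \emph{not} the brute-force estimation of the generic monomials — that is just exponent arithmetic with Lemma \ref{lem:low} — but rather the careful handling of the \emph{resonant self-interaction} in the quadratic term of $\mathcal N_2$, namely isolating the frequency-$2N$ (and frequency-$0$) outputs of $u_h\partial_x^3 u_h$ and verifying that the phase shift $\mp t$ and the dispersive phase $\Phi_N(t)$ in \eqref{hiwave} are chosen exactly so that these do not contribute to $\mathcal P^\pm$ at leading order. One must also be careful that the cutoff functions $\phi_N,\wt\phi_N$ interact correctly (using $\wt\phi_N\phi_N=\phi_N$) so that the $u_h$-times-$u_l$ cross terms where $u_l\approx u_{0,l}^\pm=\pm\epsilon N^{-3}\wt\phi_N$ reproduce, on $\operatorname{supp}\phi_N$, a constant-coefficient perturbation of the high wave — this is what forces the $\mp t$ in the ansatz in the first place. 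I would handle these points by following the structure of the proof of Proposition 6.2 in \cite{Kwon2008} verbatim for the quadratic interactions and only supplying the genuinely new estimates for the cubic, quartic and quintic monomials, which are all lower-order.
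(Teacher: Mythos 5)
Your proposal follows essentially the same route as the paper's proof: split off the linear-plus-quadratic part and handle it via Kwon's argument, with the delicate cancellation being exactly the one you identify — the cross term $u_l\px^3u_h$ against the built-in phase shift $\mp t$, which (using $\phi\wt\phi=\phi$) reduces to $N^{-\frac{4+\delta}{2}-s}N^3\phi_N(u_l-u_{0,l})\sin(\cdots)$ and is controlled by \eqref{low_3} — and then bound the remaining cubic, quartic and quintic monomials crudely via Lemma \ref{lem:low} and the explicit form of $u_h$, obtaining \eqref{eq:F_H} by paying $N^{\sigma}$ through frequency localization. Only cosmetic slips: the relevant reference is Lemma 6.4 (not Proposition 6.2) of Kwon's paper, and the exponent $N^{1-\frac{3(4+\delta)}{2}-4s}$ arises from the quartic monomial $w^3w_x$ in $\mathcal{SN}$ rather than from a cubic term.
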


\begin{proof}
It suffices to consider $\mathcal P^{+}$, since an identical argument holds true for $\mathcal P^{-}$. We drop the super-index $+$. We decompose $\mathcal P$ into $\mathcal P_1+\mathcal P_2$, where $\mathcal P_2= \mathcal N_3(u_{ap}) -\mathcal N_3(u_l) + \mathcal{SN}(u_{ap}) - \mathcal{SN}(u_l)$ and $\mathcal P_1 = \mathcal P -\mathcal P_2$. Lemma 6.4 in \cite{Kwon2008} exactly shows \eqref{eq:F_L2} and \eqref{eq:F_H} for $\mathcal P_1$\footnote{A small difference between $\mathcal P_1$ and $F$ in Lemma 6.4 in \cite{Kwon2008} does not make any trouble. Indeed, our setting of $u_h$ corresponds to \eqref{5G_Gen}, so that one can immediately  apply the argument in the proof of Lemma 6.4 in \cite{Kwon2008} to our case. Moreover, the cubic term with one derivative in $\mathcal N_2(u_{ap})$ can be dealt with similarly as $\mathcal{SN}(u_{ap})$.}. Our setting of $\phi$, $\wt{\phi}$ and $u_l$ is essential to deal with 
\[\Lambda := \epsilon N^{-\frac{4+\delta}{2}-s}\phi_N(x)(\pt+\px^5+10\mu^2\lambda^2\px^3 + \epsilon^{-1}u_l\px^3)\cos\left(N x -\Phi_N(t) - t\right)\]
contained in $\mathcal P_1$ (compared to $F_4$ in the proof of Lemma 6.4 in \cite{Kwon2008}). Indeed, a direct calculation in addition to $u_{0,l}(x) := \epsilon N^{-3}\wt{\phi}_{N}(x)$ and $\phi \wt{\phi} = \phi$ gives
\[\begin{aligned}
\Lambda =&~{} N^{-\frac{4+\delta}{2}-s}\phi_N(x)\left(u_lN^3 - \epsilon \right)\sin\left(N x -\Phi_N(t) - t\right) \\
=&~{} N^{-\frac{4+\delta}{2}-s}N^3\phi_N(x)\left(u_l - u_{0,l} \right)\sin\left(N x -\Phi_N(t) - t\right),
\end{aligned}\]
which is handled by using \eqref{low_3}. Thus, it suffices to show \eqref{eq:F_L2} and \eqref{eq:F_H} for $\mathcal P_2$. Putting first $u_{ap} = u_l + u_h$ into $10u_{ap}^2u_{ap, 3x} - 10 u_l^2u_{l, x}$ in $\mathcal P_2$, one has
\begin{equation}\label{example1}
10u_l^2u_{h, xxx} + 20u_lu_hu_{l, xxx} + 20u_lu_hu_{h, xxx} + 10u_h^2u_{l, xxx} + 10u_h^2u_{h, xxx}.
\end{equation}
Note that 
\[\begin{aligned}
u_{h,x} =&~{} N^{-\frac{4+\delta}{2}-s}\left(\px\phi_{N}(x) \cos\left(N x -\Phi_N(t) - t\right) +\phi_{N}(x) \px \cos\left(N x -\Phi_N(t) - t\right)\right)\\
=&~{}N^{-\frac{4+\delta}{2}-s}\left(N^{-(4+\delta)}\phi_{N,x}(x)\cos\left(N x -\Phi_N(t) - t\right)- N \phi_{N}(x)\sin\left(N x -\Phi_N(t) - t\right)\right). 
\end{aligned}\]
Thus, one can see that the worst term arises from the case when the derivative acts on $\cos\left(N x -\Phi_N(t) - t\right)$. Using Lemmas \ref{lem:limit} and \ref{lem:low}, one estimates
\[\norm{\eqref{example1}}_{L^2} \lesssim  N^{-3-s} + N^{-\frac{4+\delta}{2}-2s} + N^{-1-\delta -3s}.\]
An analogous argument yield
\[\norm{u_{ap, x}^3 - u_{l, x}^3}_{L^2} \lesssim N^{-5-2(4+\delta)-s} + N^{-1-\frac{3(4+\delta)}{2}-2s} + N^{-1-\delta-3s},\]
\[\norm{u_{ap}u_{ap, x}u_{ap, xx} - u_{l}u_{l, x}u_{l, xx}}_{L^2} \lesssim N^{-8-\delta-s} + N^{-\frac{4+\delta}{2}-2s} + N^{-1-\delta -3s},\]
\[\norm{u_{ap}^4u_{ap,x} - u_{l}^4u_{l,x}}_{L^2} \lesssim N^{-11-s} + N^{-8-\frac{4+\delta}{2}-2s} + N^{-5-(4+\delta)-3s} + N^{-2-\frac{3(4+\delta)}{2}-4s}+ N^{1-2(4+\delta)-5s},\]
\[\norm{u_{ap}u_{ap, x} - u_{l}u_{l, x}}_{L^2} \lesssim N^{-2-s} + N^{1-\frac{4+\delta}{2}-2s}\]
and
\[\norm{u_{ap}^3u_{ap, x} - u_{l}^3u_{l, x}}_{L^2} \lesssim N^{-8-s} + N^{-5-\frac{4+\delta}{2}-2s} +N^{-2-(4+\delta)-3s}+N^{1-\frac{3(4+\delta)}{2}-4s}.\]
Collecting all, we completes the proof of \eqref{eq:F_L2}. Moreover, the fractional Leibniz rule ensure at least $\norm{\mathcal P}_{\dot{H}^{\sigma}} \lesssim_{\sigma} N^{\sigma} \norm{\mathcal P}_{L^2}$, which in addition to \eqref{eq:F_L2} implies \eqref{eq:F_H}, since $u_{l, t}+u_{l, 5x}+10\mu^2\lambda^2u_{l, 3x} + 30\mu^4 \lambda^4 u_{l, x} +\mathcal N_2(u_l) + \mathcal N_3(u_l) + \mathcal{SN}(u_l) = 0$ and the others contains at least one $u_h$. We complete the proof.
\end{proof}

\subsection{Proof of Proposition \ref{prop:ill-posed}}
Let $w^{\pm} := u_N^{\pm} - u_{ap}^{\pm}$. We only show $\norm{w^+}_{H^s} = o(1)$ as $N \to \infty$ and drop the super-index $+$. For $s \ge 2$, the local well-posedness theory is available. A direct calculation gives
\[\Gamma w + \mathcal N_2(u_N) - \mathcal N_2(u_{ap}) + \mathcal N_3(u_N) - \mathcal N_3(u_{ap}) + \mathcal{SN}(u_N) - \mathcal{SN}(u_{ap}) + \mathcal P =0,\]
where $\Gamma := \pt +\px^5 +10\mu^2\lambda^2 \px^3$ and $\mathcal P$ is as in \eqref{F}. For $2 \le \sigma$, the local well-posedness, in particular \eqref{eq:result}, ensures 
\begin{equation}\label{uNHs}
\norm{u_N}_{C_TH^\sigma} + \norm{u_N}_{F^\sigma(T)} \lesssim \norm{u_N(0)}_{H^{\sigma}} \lesssim N^{\sigma -s}.
\end{equation}
Moreover, a direct calculation and the local theory (for $u_l$) gives
\begin{equation}\label{uapHs}
\norm{u_{ap}}_{C_TH^\sigma} + \norm{u_{ap}}_{F^\sigma(T)} \lesssim N^{-\frac{2-\delta}{2}} +  N^{\sigma -s}.
\end{equation}
Using Propositions \ref{prop:linear}, Propositions \ref{prop:bilinear}, \ref{prop:trilinear}, \ref{prop:multilinear} and \ref{prop:energy diff}, and \eqref{eq:F_L2} under \eqref{uNHs} and \eqref{uapHs}, one concludes
\[\norm{w}_{F^0(T)} \lesssim \norm{\mathcal P}_{L_T^1L_x^2} = O(N^{-s -\beta}),\]
for $\beta = \min(\delta, -\frac{2-\delta}{2}+s) > 0$, which, in addition to Proposition \ref{prop:H^s}, implies
\begin{equation}\label{ill_1}
\norm{w}_{L_T^{\infty}L_x^2} = O(N^{-s -\beta}).
\end{equation}
Furthermore, an analogous argument (but using \eqref{eq:F_H} instead of \eqref{eq:F_L2}) in addition to 
\[\norm{u_{ap}}_{F^2s(T)}\norm{w}_{F^0(T)} = O(N^{s}N^{-s-\beta}) = O(N^{-\beta}),\]
ensures $\norm{w}_{F^s(T)}  = O(N^{-\beta})$, which concludes \eqref{eq:approx} as $N \to \infty$ for $s \ge 2$.

\medskip

To fill the regularity range $0 < s < 2$, we use the conservation law and the interpolation theorem. $H^2$ conservation law \eqref{E5-1} and a direct calculation yield
\[\norm{u_N}_{H^2} \lesssim N^{2-s} \quad \mbox{and} \quad \norm{u_{ap}}_{H^2} \lesssim N^{2-s},\]
respectively, which concludes
\begin{equation}\label{ill_2}
\norm{w}_{H^2} \lesssim N^{2-s}.
\end{equation}
The interpolation between \eqref{ill_1} and \eqref{ill_2} ensures
\[\norm{w}_{H^s} \lesssim \norm{w}_{L^2}^{1-\frac{s}{2}}\norm{w}_{H^3}^{\frac{s}{2}} \lesssim N^{-\frac{\beta (2-s)}{2}},\]
which proves \eqref{eq:approx} as $N \to \infty$ for $0 < s < 2$.

\section{Proof of Lemma \ref{Iddificil}.}\label{apIddificil}
We are going to prove the identity \eqref{ide1nvbc}
\[
 \tilde B_{\mu,t} = (\al^2 + \bt^2)^2 B_\mu +   2\big(\al^2 - \bt^2 - 5\mu^2\big)\big(B_{\mu,xx} + 2B_\mu^3 + 6\mu B_\mu^2\big).
\]
Firstly and  for the sake of simplicity, we will use the following notation:
\begin{align*}
& A_1:=  (\al^2+\bt^2)^2 ,\quad A_2:= 2(\al^2 - \bt^2 - 5\mu^2),\nonu\\
&\Delta=\al^2+\bt^2-4\mu^2,\quad e^{z} = \cosh(z) + \sinh(z),\nonu\\
&D:= f^2 + g^2,\quad \text{where}\quad f, g~~\text{and its derivatives are given by:}\nonu\\
\end{align*}
\begin{equation}\label{notacionF}
\begin{aligned}
&  f=\cosh(\bt y_2)-\frac{2\bt\mu}{\al\sqrt{\al^2+\bt^2}\sqrt{\Delta}}(\al\cos(\al y_1) -\bt\sin(\al y_1) ),\\
&  f_1:=f_x=\bt\sinh(\bt y_2)+\frac{2\bt\mu}{\sqrt{\al^2+\bt^2}\sqrt{\Delta}}(\bt\cos(\al y_1) +\al\sin(\al y_1)), \\
&  f_2:=f_t=\bt\ga_5\sinh(\bt y_2)+\frac{2\bt\delta_5\mu}{\sqrt{\al^2+\bt^2}\sqrt{\Delta}}(\bt\cos(\al y_1) +\al\sin(\al y_1) ),\\
&  f_3:=f_{xx}=\bt^2\cosh(\bt y_2)+\frac{2\al\bt\mu}{\sqrt{\al^2+\bt^2}\sqrt{\Delta}}(-\al\cos(\al y_1) +\bt\sin(\al y_1) ),\\
&  f_4:=f_{xxx}=\bt^3\sinh(\bt y_2)-\frac{2\al^2\bt\mu}{\sqrt{\al^2+\bt^2}\sqrt{\Delta}}(\bt\cos(\al y_1) +\al\sin(\al y_1) )
 \end{aligned}
\end{equation}
and
\begin{equation}\label{notacionG}
\begin{aligned}
&  g=\frac{\bt\sqrt{\al^2+\bt^2}}{\al\sqrt{\Delta}}\sin(\al y_1) -  \frac{2\bt\mu e^{\bt y_2}}{\Delta},\\
&  g_1:=g_x=\frac{\bt\sqrt{\al^2+\bt^2}}{\sqrt{\Delta}}\cos(\al y_1) -  \frac{2\bt^2\mu e^{\bt y_2}}{\Delta},\\
&  g_2:=g_t=\frac{\bt\delta_5\sqrt{\al^2+\bt^2}}{\sqrt{\Delta}}\cos(\al y_1) -  \frac{2\bt^2\ga_5\mu e^{\bt y_2}}{\Delta},\\
&  g_3:=g_{xx}=-\frac{\al\bt\sqrt{\al^2+\bt^2}}{\sqrt{\Delta}}\sin(\al y_1) -  \frac{2\bt^3\mu e^{\bt y_2}}{\Delta},\\
&  g_4:=g_{xxx}=-\frac{\al^2\bt\sqrt{\al^2+\bt^2}}{\sqrt{\Delta}}\cos(\al y_1) -  \frac{2\bt^4\mu e^{\bt y_2}}{\Delta},
\end{aligned}
\end{equation}
where velocities $(\ga_5,\delta_5)$ are given in \eqref{speedsnthG}. From the explicit expression of the  breather solution \eqref{5BreG}  but now written in terms of the above derivatives \eqref{notacionF}--\eqref{notacionG}, we obtain that:

\begin{equation}\label{Btexp}
B_\mu   = 2\frac{g_1f-f_1g}{D} 
\qquad\text{and}\qquad \tilde B_{\mu,t}  = 2\frac{g_2f-f_2g}{D}.
\end{equation}
Moreover we get
\begin{equation}\label{B3}
B_\mu^2  = 4\left(\frac{g_1f-f_1g}{D}\right)^2\qquad\text{and}\qquad B_\mu^3  = 8\left(\frac{g_1f-f_1g}{D}\right)^3. 
\end{equation}
%
%
%
%
%
%
Now, we compute $B_{\mu,xx}$. First we get
\[B_{\mu,x} = -\frac{2}{D^2}\left(f^3 g_3-f^2 (2 f_1 g_1+f_3 g)+f g \left(2 f_1^2+g g_3-2 g_1^2\right)+g^2 (2 f_1 g_1-f_3 g)\right),\]
and then
\begin{equation}\label{Bxx}
B_{\mu,xx} = 2\frac{M_1}{D^3},
\end{equation}
where 
\begin{equation}\label{N1}
\begin{aligned}
M_1:=&\Big(f^5 g_4-f^4 (3 f_1 g_3+3 f_3 g_1+f_4 g)+2 f^3 \left(3 f_1^2 g_1+3 f_1 f_3 g+g^2 g_4-3 g g_1 g_3-g_1^3\right)\\
&-2 f^2 g \left(3 f_1^3-9 f_1 g_1^2+f_4 g^2\right)+f g^2 \left(-18 f_1^2 g_1+6 f_1 f_3 g+g^2 g_4-6 g g_1 g_3+6 g_1^3\right)\\
&+g^3 \left(2 f_1^3+f_1 \left(3 g g_3-6 g_1^2\right)+g (3 f_3 g_1-f_4 g)\right)\Big),
\end{aligned}
\end{equation}
and therefore from  \eqref{Btexp}, \eqref{B3}, \eqref{Bxx} and \eqref{N1}, we get
\begin{equation}\label{rhs}
A_1B_\mu + A_2(B_{\mu,xx} + 2B_\mu^3 + 6\mu B_\mu^2)  = \frac{M_2}{D^3},
\end{equation}
where 
\begin{equation}\label{M2}
M_2:=2\Big(A_1D^2(fg_1-f_1g) +  A_2(8(fg_1-f_1g)^3 + 12\mu D(f_1g-fg_1)^2 + M_1)\Big),
\end{equation}
Now, we verify by using the symbolic software \emph{Mathematica} that, after expanding $f's$ and $g's$ terms \eqref{notacionF}--\eqref{notacionG} and lengthy rearrangements, the above term \eqref{M2}  simplifies as follows:
\[M_2 = 2D^2(g_2f-gf_2).\]
Finally, remembering \eqref{rhs}, we have that
\[A_1B_\mu + A_2(B_{\mu,xx} + 2B_\mu^3 + 6\mu B_\mu^2)  = \frac{M_2}{D^3} = \frac{2D^2(g_2f-gf_2)}{D^3} = \tilde{B}_{\mu,t},\]
and we conclude.

\end{document}